\pgfplotsset{compat=1.16}
\definecolor{wqwqwq}{rgb}{0.3764705882352941,0.3764705882352941,0.3764705882352941}
\definecolor{xdxdff}{rgb}{0.6588235294117647,0.6588235294117647,0.6588235294117647}
\renewcommand\labelenumi{(\roman{enumi})}
\renewcommand\theenumi\labelenumi
\numberwithin{equation}{section}
\numberwithin{figure}{section}
\theoremstyle{plain}
\newtheorem{theorem}{Theorem}[section]
\newtheorem{proposition}[theorem]{Proposition}
\newtheorem{lemma}[theorem]{Lemma}
\newtheorem{corollary}[theorem]{Corollary}
\theoremstyle{remark}
\newenvironment{remark}
{\pushQED{\qed}\remarkx}
 {\popQED\endremarkx}
\theoremstyle{definition}
\newtheorem{definition}[theorem]{Definition}
\newenvironment{example}
{\pushQED{\qed}\examplex}
 {\popQED\endexamplex}
\newcommand\G{\mathcal{G}}
\newcommand\calS{\mathcal{S}}
\newcommand\B{\mathcal{B}}
\newcommand\g{\mathfrak{g}}
\DeclareMathOperator{\Aut}{Aut}
\DeclareMathOperator{\supp}{supp}
\DeclareMathOperator{\Ad}{Ad}
\newcommand{\Pro}{\mathbb{P}}
\DeclareMathOperator{\pr}{pr}
\newcommand{\E}{\mathbb{E}}
\DeclareMathOperator{\SL}{SL}
\newcommand\C{\mathbb{C}}
\newcommand\N{\mathbb{N}}
\newcommand\Q{\mathbb{Q}}
\newcommand\R{\mathbb{R}}
\newcommand\Z{\mathbb{Z}}
\newcommand{\dd}{\mathop{}\!\mathrm{d}}
\DeclarePairedDelimiter\br{(}{)}
\DeclarePairedDelimiter\sqbr{[}{]}
\DeclarePairedDelimiter\abs{\lvert}{\rvert}
\DeclarePairedDelimiter\norm{\lVert}{\rVert}
\providecommand\for{}
\newcommand\SetSymbol[1][]{%
\nonscript\:#1\vert
\allowbreak
\nonscript\:
\mathopen{}}
\DeclarePairedDelimiterX\set[1]{\lbrace}{\rbrace}{%
\renewcommand\for{\SetSymbol[\delimsize]}
#1
}
\newcommand{\euler}{\mathrm{e}}
\renewcommand{\epsilon}{\varepsilon}
\title[Spread out random walks]{Spread out random walks on homogeneous spaces}
\author{Roland Prohaska}
\address{Departement Mathematik, ETH Z\"{u}rich, R\"{a}mistrasse 101, 8092 Z\"{u}rich, Switzerland}
\email{roland.prohaska@math.ethz.ch}
\subjclass[2010]{Primary 37A50; Secondary 22F30, 60G50, 60B15, 60J05}
\keywords{Random walk, homogeneous space, Markov chain, Harris recurrence}
\date{\usdate\today}
\begin{document}

\begin{abstract}
A measure on a locally compact group is called spread out if one of its convolution powers is not singular with respect to Haar measure. 
Using Markov chain theory, we conduct a detailed analysis of random walks on homogeneous spaces with spread out increment distribution. 
For finite volume spaces, we arrive at a complete picture of the asymptotics of the $n$-step distributions: 
They equidistribute towards Haar measure, often exponentially fast and locally uniformly in the starting position. 
In addition, many classical limit theorems are shown to hold. 
In the infinite volume case, we prove recurrence and a ratio limit theorem for symmetric spread out random walks on homogeneous spaces of at most quadratic growth. 
This settles one direction in a long-standing conjecture. 
\end{abstract}
\maketitle
\section{Introduction}\label{sec:intro}
Let $G$ be a $\sigma$-compact locally compact metrizable group, $\Gamma\subset G$ a discrete subgroup, and $X$ the homogeneous space $G/\Gamma$. 
A Borel probability measure $\mu$ on $G$ defines a random walk on $X$: 
A step corresponds to choosing a group element $g\in G$ according to $\mu$ and then moving from the current location $X\ni x$ to $gx$. 
Given a starting point $x_0\in X$, we may represent the location $\Phi_n$ after $n$ steps as 
\begin{align}\label{Phi_def}
\Phi_n=Y_n\dotsm Y_1x_0,
\end{align}
where $(Y_k)_{k\in\N}$ is a sequence of i.i.d.\ random variables in $G$ with common law $\mu$. 

Over the past few decades, a substantial amount of research has been dedicated to understanding the long-term behavior of such random walks for non-abelian groups $G$, especially semisimple real Lie groups, with cornerstone developments e.g.\ by Furstenberg~\cite{F}, Eskin--Margulis~\cite{EM}, and Benoist--Quint~\cite{BQ1,BQ_rec,BQ2,BQ3}. 
Ideally, one would like to quantitatively describe the asymptotics of the random walk in terms of some natural, \enquote{stable} limiting distribution on $X$. 
A prominent candidate for the latter is a \emph{Haar measure} $m_X$ on $X$, by which we shall mean a non-trivial $G$-invariant Radon measure on $X$ (if one exists). 
In case $X$ admits a finite Haar measure, we assume that $m_X$ is normalized to be a probability measure, call $\Gamma$ a \emph{lattice}, and say that $X$ has finite volume. 
Otherwise, we say that $X$ has infinite volume. 
According to this distinction, the discussion splits into two cases. 
\subsection{Finite Volume Spaces}\label{subsec:fin_vol_intro}
To illustrate the kind of description of the asymptotic behavior of a random walk we are interested in, let us reproduce one of the main results of~\cite{BQ3} as motivating example. 
For the statement, recall that a probability measure $\nu$ on $X$ is called \emph{homogeneous} if there exists a closed subgroup $H$ of $G$ and a point $x\in X$ such that $\supp(\nu)=Hx$ and $\nu$ is $H$-invariant. 
\begin{theorem}[{Benoist--Quint~\cite{BQ3}}]\label{thm:BQ}
Let $G$ be a real Lie group, $\Gamma\subset G$ a lattice, and $\mu$ a compactly supported probability measure on $G$. 
Suppose that the closed subsemigroup $\calS$ generated by $\supp(\mu)$ has the property that the Zariski closure of $\Ad(\calS)$ in $\Aut(\g)$ is Zariski connected, semisimple, and has no compact factors. 
Then for every $x_0\in X$ there is a homogeneous probability measure $\nu_{x_0}$ with $\supp(\nu_{x_0})=\overline{\calS x_0}$ and such that 
\begin{align*}
\frac1n\sum_{k=0}^{n-1}\mu^{*k}*\delta_{x_0}\longrightarrow \nu_{x_0}
\end{align*}
as $n\to\infty$ in the weak* topology. 
In particular, whenever $\calS x_0$ is dense in $X$ the limiting distribution $\nu_{x_0}$ is the normalized Haar measure $m_X$. 
\end{theorem}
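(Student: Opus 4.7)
My plan is to follow the overall strategy of Benoist--Quint: show that every weak* limit point of the Cesàro averages is a $\mu$-stationary probability measure, classify these stationary measures as being homogeneous, and then match support and identify the limit. The three independent ingredients are (a) non-escape of mass, to ensure the Cesàro averages are tight, (b) a structure theorem for $\mu$-stationary measures on $X$, and (c) a rigidity/uniqueness statement pinning down $\nu_{x_0}$ from its support.

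First I would establish tightness of $\mu_n:=\frac{1}{n}\sum_{k=0}^{n-1}\mu^{*k}*\delta_{x_0}$. The natural tool is a Foster--Lyapunov type inequality on $X$: construct a proper function $f\colon X\to[1,\infty)$ and constants $a<1$, $b\geq 0$ with
\begin{equation*}
\int_G f(gx)\dd\mu(g)\leq a f(x)+b\qquad\text{for all }x\in X.
\end{equation*}
Such a function is produced by an Eskin--Margulis / Benoist--Quint style recurrence argument using the semisimplicity and absence of compact factors of $\overline{\Ad(\S)}^{\mathrm{Zar}}$ (this is where the hypothesis on $\Ad(\S)$ really bites, via the contraction properties of products of random matrices on $\g$). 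Iterating the inequality and averaging produces a uniform bound $\sup_n\int f\dd\mu_n<\infty$, hence tightness by properness of $f$. Any subsequential weak* limit $\nu$ is then a genuine probability measure; by a routine $\mu*\mu_n-\mu_n\to 0$ telescoping it is $\mu$-stationary.

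The main obstacle, and the heart of the matter, is the classification: every $\mu$-stationary probability measure on $X$ is a convex combination of homogeneous $\mu$-stationary measures. This is exactly the measure classification theorem of Benoist--Quint, obtained via their \emph{exponential drift}. The argument couples two random trajectories starting at nearby points, uses the Oseledets decomposition of the cocycle $\Ad(Y_n\dotsm Y_1)$ on $\g$ to produce a controlled displacement in an unstable direction, and then invokes a Ratner-type conclusion to show that the conditional measures of $\nu$ along such unstable directions are invariant under a nontrivial unipotent subgroup. A maximality/entropy argument then upgrades invariance to homogeneity. I would not re-derive this; I would cite \cite{BQ2} and use it as a black box, checking only that the hypotheses on $\Ad(\S)$ supply the required Zariski-semisimple, connected, noncompact setting, so that the Furstenberg measure on the flag variety has the right structure and the drift argument applies.

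Finally, to identify $\nu_{x_0}$ I would argue as follows. By stationarity and induction, $\supp(\mu_n)\subseteq\overline{\S x_0}$, so any limit $\nu$ satisfies $\supp(\nu)\subseteq\overline{\S x_0}$. Conversely, stationarity together with the closed semigroup action implies $\S\cdot\supp(\nu)\subseteq\supp(\nu)$, and recurrence (from step one) gives $x_0\in\supp(\nu)$, forcing $\supp(\nu)=\overline{\S x_0}$. Decomposing $\nu$ into ergodic components and applying the classification, each component is homogeneous, say supported on $H_i x_i$ with $H_i$ closed. The Zariski-connectedness and semisimplicity hypotheses force all $H_i$ to coincide with a single closed subgroup $H$ such that $\overline{\S x_0}=Hx_0$ and $\nu$ is $H$-invariant; uniqueness of $H$-invariant probability measure on a single closed orbit then gives $\nu=\nu_{x_0}$. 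Since every subsequential limit equals $\nu_{x_0}$ and the family is tight, the full Cesàro sequence converges. The special case $\overline{\S x_0}=X$ gives $H=G$ and $\nu_{x_0}=m_X$.
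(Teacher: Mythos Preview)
The paper does not contain a proof of this theorem. Theorem~\ref{thm:BQ} is quoted from Benoist--Quint \cite{BQ3} purely as motivation; the present article neither reproves it nor sketches its proof, but instead takes it as a starting point for the questions (Q1), (Q2) and then develops a parallel theory under the \emph{spread out} hypothesis. So there is nothing in this paper to compare your attempt against.

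That said, your outline is a fair high-level summary of the Benoist--Quint strategy in \cite{BQ1,BQ2,BQ3}: recurrence via a Foster--Lyapunov function (as in \cite{EM,BQ_rec}) for tightness, the stationary measure classification via exponential drift for the structure of limit points, and then an identification step. Two places where your sketch is looser than the actual argument: (1) the claim ``recurrence (from step one) gives $x_0\in\supp(\nu)$'' is not an immediate consequence of the Lyapunov inequality alone---Benoist--Quint instead use the countability of $\S$-invariant homogeneous subspaces (a Mozes--Shah/Dani--Margulis type input) to control which homogeneous pieces can carry mass; (2) the passage ``the Zariski-connectedness and semisimplicity hypotheses force all $H_i$ to coincide with a single closed subgroup $H$'' hides real work: one must rule out that mass escapes into strictly smaller homogeneous subspaces along the Ces\`aro sequence, which again relies on the countability/equidistribution machinery rather than on a direct group-theoretic argument. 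If you intend this as a citation-level sketch, it is fine; as a self-contained proof it would need those two points fleshed out.
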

Some questions left open by this theorem are listed by Benoist--Quint at the end of their survey~\cite{BQ:intro}. 
Among other things, they ask the following: 
\begin{enumerate}
\item[(Q1)] Does convergence also hold for the non-averaged laws $\mathcal{L}(\Phi_n)=\mu^{*n}*\delta_{x_0}$? 
\item[(Q2)] Can the convergence be made effective? 
\end{enumerate}

Answers to these questions are known only in special cases: 
Breuillard~\cite{Br} established (Q1) for certain measures supported on unipotent subgroups, and recently Buenger~\cite{Bue} was able to positively answer (Q1) and (Q2) for some sparse solvable measures. 
In this article, we add to this list the class of aperiodic spread out measures. 
\begin{definition}\label{def:spread_out_aper}
Let $\mu$ be a probability measure on $G$. 
\begin{itemize}
\item The measure $\mu$ is called \emph{spread out} if for some $n_0\in\N$ the convolution power $\mu^{*n_0}$ is not singular with respect to Haar measure on $G$. 
\item Let $\G$ denote the closed subgroup of $G$ generated by $\supp(\mu)$. 
Then we call $\mu$ \emph{aperiodic} if $\mu$ is not supported on a coset of a proper normal open subgroup of $\G$ containing the commutator subgroup $\sqbr{\G,\G}$. 
\end{itemize}
\end{definition}
As we shall see, the qualitative behavior of spread out random walks on finite volume homogeneous spaces can be understood in great detail, and in fact for a much larger class of groups than (semisimple) real Lie groups. 
In particular, no connectedness assumption needs to be imposed, so that e.g.\ discrete or $p$-adic groups are naturally included in our setup. 
\begin{theorem}\label{thm:main_thm}
Let $\Gamma\subset G$ be a lattice and $\mu$ an aperiodic spread out probability measure on $G$. 
Then for every $x_0\in X$ the orbit $\G x_0$ is clopen in $X$ and we have 
\begin{align}\label{equidist}
\norm{\mu^{*n}*\delta_{x_0}-m_{\G x_0}}\longrightarrow 0
\end{align}
as $n\to\infty$, where $m_{\G x_0}$ denotes the normalized Haar measure on $\G x_0$ and $\norm{\cdot}$ is the total variation norm. 
If the random walk additionally admits a continuous and everywhere finite Lyapunov function \textup{(}see~\textup{\S\ref{subsec:eff_mix})}, then there is a constant $\kappa>0$ such that for every compact subset $K\subset X$ and $n\in\N$ we have 
\begin{align*}
\sup_{x\in K}\norm*{\mu^{*n}*\delta_x-m_{\G x}}\ll_K \mathrm{e}^{-\kappa n}.
\end{align*} 
For example, this holds when $G$ is a connected semisimple real algebraic group without compact factors and $\mu$ has compact support. 
\end{theorem}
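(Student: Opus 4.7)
The plan is to translate the random walk into a Markov chain on $X$ with transition kernel $P(x,\cdot)=\mu*\delta_x$, and to exploit that the spread out condition is precisely the kind of regularity needed to invoke the classical theory of $\psi$-irreducible, aperiodic, Harris recurrent chains (Meyn--Tweedie style).

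First I would dispose of the structural claim that $\G x_0$ is clopen. Since $\mu^{*n_0}$ has a non-trivial absolutely continuous part with respect to Haar measure, its support has positive Haar measure, and a Steinhaus-type argument shows that some power $\supp(\mu^{*n})\supp(\mu^{*n})^{-1}$ contains a neighborhood of the identity. This subgroup lies in $\G$, so $\G$ is an open subgroup of $G$; hence every $\G$-orbit in $X$ is open, and as a complement of a union of open orbits, $\G x_0$ is also closed. In particular $m_{\G x_0}$ is well-defined and $P$-stationary.

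Next, for the total variation convergence \eqref{equidist}, I would work with $P$ restricted to $\G x_0$. The spread out hypothesis promotes to a \emph{minorization condition}: for some $n_0$ there exist a set $C\subset \G x_0$ of positive Haar measure, a measure $\nu$ on $\G x_0$ with $\nu\ll m_{\G x_0}$, and $\beta>0$ with $P^{n_0}(x,\cdot)\geq \beta\nu$ for $x\in C$. Together with $\G x_0$-invariance of $m_{\G x_0}$ this gives $\psi$-irreducibility with $\psi=m_{\G x_0}$ and the existence of small sets; the aperiodicity assumption on $\mu$ rules out the periodic case for the chain (the coset-of-a-proper-open-normal-subgroup obstruction is exactly the cycle decomposition one faces). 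With a finite stationary probability measure in hand, Orey's theorem yields $\|\mu^{*n}*\delta_x-m_{\G x_0}\|\to 0$ for every $x$, and crucially without any moment assumption.

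For the quantitative statement, the plan is to combine the minorization above with a geometric drift condition $PV\leq \gamma V+b\mathbf{1}_C$ supplied by the Lyapunov function, in the form pioneered by Meyn--Tweedie for geometric ergodicity. This yields a constant $\kappa>0$ and, for each level set $\{V\leq R\}$, a uniform bound $\sup_{V(x)\leq R}\|\mu^{*n}*\delta_x-m_{\G x}\|\ll_R \euler^{-\kappa n}$; the continuity and finiteness of $V$ means every compact $K\subset X$ is contained in such a sublevel set, giving the claimed locally uniform rate. The main obstacle is the final assertion: producing a continuous, everywhere finite Lyapunov function for compactly supported aperiodic spread out $\mu$ on a connected semisimple real algebraic group without compact factors. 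Here I would follow the Eskin--Margulis height-function technology for quotients by lattices, using $\alpha$-th power of reciprocals of systole-type functions in the arithmetic case and the analogous construction of Benoist--Quint in general, and verify that the compactness of $\supp(\mu)$ together with the no-compact-factor hypothesis provides the required contraction on large heights.
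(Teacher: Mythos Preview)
Your proposal is correct and follows essentially the same route as the paper: openness of $\G$ from the spread out hypothesis, $\psi$-irreducibility and aperiodicity of the chain on $\G x_0$, positive Harris recurrence from the finite invariant measure, total variation convergence via Orey/Meyn--Tweedie, geometric ergodicity from the drift condition, and Eskin--Margulis for the Lyapunov function in the semisimple case. The one place where the paper is more explicit is the passage to $\psi$-irreducibility: it first shows (using finite volume) that the semigroup $S=\bigcup_n\supp(\mu^{*n})$ acts transitively on $\G x_0$, so that every state is reachable and the $T$-chain property yields $\psi$-irreducibility with $\psi\sim m_{\G x_0}$---your minorization on a single set $C$ together with invariance of $m_{\G x_0}$ does not quite give this without that reachability argument.
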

For a statement without the aperiodicity assumption we refer the reader to the discussion in~\S\ref{sec:consequences}. 

In two special cases, the above result takes a particularly simple form. 
One of them is when $X$ is connected, the other when $\mu$ is adapted. 
\begin{definition}\label{def:adapted}
A probability measure $\mu$ on $G$ is called \emph{adapted} if the closed subgroup $\G$ generated by $\supp(\mu)$ coincides with $G$. 
\end{definition}
\begin{corollary}\label{cor:conn}
Let $\Gamma\subset G$ be a lattice and $\mu$ a spread out probability measure on $G$. 
Suppose that $X$ is connected or that $\mu$ is additionally adapted and aperiodic. 
Then for every $x_0\in X$ we have 
\begin{align*}
\norm{\mu^{*n}*\delta_{x_0}-m_X}\longrightarrow 0
\end{align*}
as $n\to\infty$, where $m_X$ denotes the normalized Haar measure on $X$. 
\end{corollary}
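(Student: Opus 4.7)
The plan is to deduce both claims from Theorem \ref{thm:main_thm} (in the adapted, aperiodic case) and from its aperiodicity-free refinement announced in \parsign\ref{sec:consequences} (in the connected case), by verifying in each case that $\G x_0 = X$, whence $m_{\G x_0} = m_X$. If $\mu$ is adapted and aperiodic, this is immediate: Definition \ref{def:adapted} gives $\G = G$, and transitivity of the $G$-action on $X = G/\Gamma$ yields $\G x_0 = X$, so Theorem \ref{thm:main_thm} applies verbatim.

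Now assume $X$ is connected. The key step is to show that $\G$ is open in $G$, a standard consequence of the spread out hypothesis. Write the absolutely continuous part of $\mu^{*n_0}$ as $f\cdot m_G$ with $f\in L^1(G,m_G)$ not a.e.\ zero; truncating to $f_M = \min(f,M)\,\mathbf{1}_K$ for large $M$ and compact $K$, one obtains $f_M \in L^1\cap L^\infty$ with $\int f_M > 0$, so that $f_M * f_M$ is continuous and not identically zero. Since $\mu^{*2n_0} \geq (f_M * f_M)\,m_G$ as measures, it follows that $\supp(\mu^{*2n_0}) \subset \G$ has non-empty interior, and being a subgroup containing an open set, $\G$ is open in $G$. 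Consequently each $\G$-orbit in $X$ is open (as the image of a $\G$-coset under the open quotient map $G \to X$), and as the orbits partition $X$ into open sets each of them is clopen. Connectedness of $X$ then forces the orbit $\G x_0 \ni x_0$ to equal $X$, giving $m_{\G x_0} = m_X$.

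I expect the main obstacle in this second case to be that aperiodicity has not been assumed, so Theorem \ref{thm:main_thm} cannot be invoked directly and one must appeal to its aperiodicity-free refinement from \parsign\ref{sec:consequences}. The heuristic behind why this refinement applies is precisely the same clopen-orbit mechanism: a potential period would come from a proper, normal, open subgroup $N \subsetneq \G$ containing $[\G,\G]$ with $\supp(\mu) \subset aN$, but $N$ would then be open in $G$, its orbits in $X$ would be clopen, and connectedness would force $Nx_0 = X$. Thus the cyclic quotient $\G/N$ cannot induce a non-trivial periodic decomposition of $X$, and the convergence to $m_X$ goes through as in the aperiodic case.
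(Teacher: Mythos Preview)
Your adapted--aperiodic case is correct and matches the paper's intended deduction from Theorem~\ref{thm:main_thm}. For the connected case, your identification of $\G x_0 = X$ via the openness of $\G$ (which is Lemma~\ref{lem:adapted}) is also correct.

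The remaining issue---ruling out periodicity when $X$ is connected---you present only as a heuristic, and your route through the subgroup $N$ is more circuitous than necessary. The paper's argument (Proposition~\ref{prop:aperiodicity}(i), resting on Proposition~\ref{prop:periodicity}) is direct: once one knows the random walk on $X=\G x_0$ is $\psi$-irreducible, Proposition~\ref{prop:periodicity} shows the sets $D_1,\dots,D_d$ of any $d$-cycle can be taken to be \emph{clopen}, so connectedness of $X$ forces $d=1$ immediately. One then applies Theorem~\ref{thm:mixing} with $d=1$. Your heuristic via $N$ ultimately relies on the same structure (you need that $N$ preserves each $D_i$, which comes from the homomorphism $\G\to\Z/d\Z$ constructed in the proof of Proposition~\ref{prop:aperiodicity}(ii), which in turn uses Proposition~\ref{prop:periodicity}), so it is not an independent argument---just a detour through the group-theoretic characterization of aperiodicity rather than the topological one.

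In short: the proposal is correct in outline, but in the connected case you should replace the heuristic by the one-line observation that the clopen $d$-cycle of Proposition~\ref{prop:periodicity} is incompatible with connectedness unless $d=1$.
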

\begin{remark}
In the literature on spread out random walks it has been customary to restrict attention to adapted measures $\mu$ (\cite{GScho,HR,Scho:p-adic,Scho:RW,Scho:rec_RW}). 
This is indeed often justified, since one can replace $G$ by $\G=\overline{\langle \supp(\mu)\rangle}$ (see Lemma~\ref{lem:adapted}). 
However, as a consequence one must also replace $X$ by an orbit $\G x$, which is not always desirable. 
Hence, we emphasize that in the case of a connected space $X$, adaptedness (or aperiodicity) of $\mu$ are not needed as assumptions in the above corollary, distinguishing this result from the existing literature. 
\end{remark}
Our approach is to analyze the random walk given by a spread out measure $\mu$ from the viewpoint of general state space Markov chain theory. 
The key observation is that it is a positive Harris recurrent $T$-chain on every $\G$-orbit in $X$. 
A connectedness assumption can then be used to establish transitivity (i.e.\ $\G x=X$) and rule out periodic behavior. 
Feeding all of this into the general theory, we obtain our results. 

As a matter of fact, exploring the extent to which Markov chain theory can be of use in the study of random walks on finite volume homogeneous spaces has been one of the motivations for the present work. 
As they note, already Benoist--Quint's approach was inspired by Markov chain methods (\cite[p.~702]{BQ2}); however, they could not directly apply available results, since the key assumption of $\psi$-irreducibility was not satisfied in the applications they had in mind (\cite[p.~703]{BQ2}). 
A natural question is when this assumption is satisfied. 
As part of our discussion, we show that this is the case precisely for spread out measures (see Proposition~\ref{prop:spread_out_irr} and Corollary~\ref{cor:fin_vol_irred}). 
\subsection{Infinite Volume Spaces}\label{subsec:inf_vol_intro}
Most of the qualitative analysis underlying Theorem~\ref{thm:main_thm} can also be carried out in the infinite volume case. 
For the upgrade to quantitative information though, one has to deal with an additional issue: recurrence of the random walk. 
The following dichotomy theorem of Hennion--Roynette describes the situations that can occur for spread out random walks. 
We write $\Pro_x$ for a probability measure under which the random walk \eqref{Phi_def} starts at $x\in X$ and $\E_x$ for the associated expectation (see~\S\ref{subsec:prelim}). 
\begin{theorem}[Hennion--Roynette~\cite{HR}]\label{thm:HR}
Let $\mu$ be an adapted spread out probability measure on $G$. 
Suppose that $X$ admits a Haar measure $m_X$. 
Then either 
\begin{enumerate}
\item all states $x\in X$ are \emph{topologically Harris recurrent}, meaning that 
\begin{align*}
\Pro_x\sqbr{\Phi_n\in B\text{ infinitely often}}=1
\end{align*}
for all neighborhoods $B$ of $x$, or 
\item all states $x\in X$ are \emph{topologically transient}, meaning that for some neighborhood $B$ of $x$ 
\begin{align*}
\E_x\sqbr*{\sum_{n=1}^\infty\mathds{1}_{\Phi_n\in B}}<\infty.
\end{align*}
\end{enumerate}
Accordingly, the random walk on $X$ given by $\mu$ is called \emph{topologically Harris recurrent} or \emph{topologically transient}. 
\end{theorem}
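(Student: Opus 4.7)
The plan is to apply the standard recurrence/transience dichotomy for $\psi$-irreducible $T$-chains (Meyn--Tweedie) on each closed orbit of the sub-semigroup $S = \overline{\bigcup_n \supp(\mu^{*n})} \subset G$, and then to transfer the conclusion across orbits using the $G$-invariance of $m_X$ together with the uniformity of the spread out minorization.

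The spread out hypothesis supplies some $n_0$ with $\mu^{*n_0}$ not singular with respect to Haar measure $m_G$ on $G$; inner regularity then yields $\alpha > 0$ and a nonempty precompact open $V \subset G$ with $\mu^{*n_0} \geq \alpha\,\mathds{1}_V\cdot m_G$. Pushed along the orbit map this gives
\[
P^{n_0}(x, A)\ \geq\ \alpha \int_V \mathds{1}_A(gx)\dd m_G(g)
\]
for every $x \in X$ and Borel $A \subset X$, with lower semicontinuous right-hand side whenever $A$ is open, so $\Phi$ is a $T$-chain. Each closed $S$-orbit $\overline{Sx}$ is $P$-invariant (using $\supp(\mu)\subset S$ and that $S$ is closed under products), and on such an orbit the minorization above combined with the definition of $S$ makes every open set of positive Haar measure accessible in finitely many steps from any starting point, yielding $\psi$-irreducibility for a suitable $\psi$ supported on $\overline{Sx}$. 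The Meyn--Tweedie dichotomy now asserts that the restricted chain is either Harris recurrent or transient; the $T$-chain property promotes this to the pointwise topological dichotomy on $\overline{Sx}$, since any neighborhood $B$ of any $y \in \overline{Sx}$ is hit with positive probability under $P^{n_0}(z, \cdot)$ uniformly for $z$ in some neighborhood of $y$.

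To extend the dichotomy across $S$-orbits, I would exploit that $m_X$ is $G$-invariant (and in particular $P$-stationary) and that the minorization holds with the same constants $(\alpha, V)$ uniformly in the base point; these two ingredients together should force the recurrence type of the restricted chain to be independent of the orbit, yielding the claimed dichotomy on all of $X$. The main obstacle, in my view, is exactly this final transfer step: the walk is not $G$-translation-invariant (its increments are fixed elements of $G$ rather than being transported with the base point), so one must argue via potential-kernel comparisons rather than by direct conjugation of chains, and it is here that the adapted hypothesis together with the spread out minorization must be combined with care.
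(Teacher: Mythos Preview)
The paper does not prove this theorem; it is quoted from Hennion--Roynette \cite{HR} and used as a black box (see also Proposition~\ref{prop:quadr_harris_rec}, which again cites \cite{HR}). So there is no ``paper's own proof'' to compare against.

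That said, your sketch has two genuine gaps beyond the one you flag yourself. First, the decomposition into ``closed $S$-orbits'' $\overline{Sx}$ is not a partition of $X$: for $z\in\overline{Sx}$ one only has $\overline{Sz}\subset\overline{Sx}$, with strict inclusion possible, so distinct orbit closures can overlap without coinciding. Consequently the restriction of the chain to a single $\overline{Sx}$ need not be $\psi$-irreducible (different points in $\overline{Sx}$ may fail to reach the same positive-measure sets), and the paper itself only obtains $\psi$-irreducibility under the stronger hypothesis of $(G,\Gamma)$-adaptedness (Proposition~\ref{prop:spread_out_irr}), not mere adaptedness in the infinite volume case. Second, your minorization $\mu^{*n_0}\ge\alpha\mathds{1}_V\,m_G$ with $V$ open does not follow from non-singularity and inner regularity alone; one first has to pass to a further convolution power to obtain a continuous density component (as in the proof of Lemma~\ref{lem:adapted}).

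The step you correctly identify as the main obstacle---transferring the recurrence type across orbits---is indeed where the real content lies, and ``potential-kernel comparisons'' is too vague to count as a proof. The natural route (and essentially what one finds in \cite{HR}) is not to work orbit by orbit at all, but to use the $P$-invariance of $m_X$ globally: the conservative/dissipative (Hopf) decomposition of $X$ with respect to $m_X$ is shown to be essentially $G$-invariant via the duality between $P$ and its adjoint, the adaptedness hypothesis forces one piece to be $m_X$-null, and the $T$-chain property then upgrades the almost-everywhere dichotomy to the everywhere topological dichotomy. Your orbit-by-orbit strategy would have to reproduce this global argument in the transfer step anyway, so it does not buy a simplification.
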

It is not difficult to see that spread out random walks on finite volume spaces are topologically Harris recurrent. 
Indeed, Kakutani's random ergodic theorem (\cite{Kak}, see also~\cite{FS}) implies that $m_X$-a.e.\ point satisfies the condition in (i). 
In general, what spaces $X$ admit topologically Harris recurrent spread out random walks is a difficult question, extensively studied by Schott~\cite{GScho,Scho:p-adic,Scho:non_amenable,Scho:RW,Scho:rec_RW}, which turns out to be intimately linked to the growth of the space. 
\begin{definition}\label{def:growth}
Suppose that $X$ admits a Haar measure $m_X$. 
Then $X$ is said to have \emph{polynomial growth of degree at most $d$} if there exists a generating relatively compact neighborhood $V$ of the identity in $G$ and $x\in X$ such that 
\begin{align*}
\limsup_{n\to\infty}\frac{m_X(V^nx)}{n^d}<\infty.
\end{align*}
In this case, it can be shown that the above holds for all choices of $x$ and $V$ (\cite{GScho}). 
When $d=2$ we say the growth is \emph{at most quadratic}. 
\end{definition}

Analogous to the more classical case of random walks on groups (for which see e.g.~\cite{GR} and the references therein), the \emph{quadratic growth conjecture} states that the homogeneous space $X=G/\Gamma$ admits topologically Harris recurrent spread out random walks if and only if it is of at most quadratic growth. 
For example, this is known to hold if $G$ is a connected real Lie group of polynomial growth (Hebisch--Saloff-Coste \cite[\S10]{HS_gaussian}) or a $p$-adic algebraic group of polynomial growth (Raja--Schott~\cite{Scho:p-adic}). 
In this paper, we show that one implication holds in general. 
\begin{theorem}\label{thm:quadratic_recurrence}
Suppose that $X$ admits a Haar measure and has at most quadratic growth. 
Let $\mu$ be an adapted symmetric spread out probability measure on $G$ with compact support. 
Then the random walk on $X$ given by $\mu$ is topologically Harris recurrent. 
\end{theorem}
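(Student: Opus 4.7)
The plan is to argue by contradiction via the Hennion-Roynette dichotomy (Theorem \ref{thm:HR}). Assume the walk is topologically transient; equivalently, for some compact symmetric neighborhood $K$ of a fixed $x_0\in X$, the Green's sum $\sum_n\langle P^n\mathds{1}_K,\mathds{1}_K\rangle_{L^2(X,m_X)}$ is finite, where $P$ is the Markov operator $Pf(x)=\int f(gx)\,d\mu(g)$. Symmetry of $\mu$ makes $P$ self-adjoint on $L^2(X,m_X)$. I aim to contradict this by proving the matching lower bound $\langle P^{2n}\mathds{1}_K,\mathds{1}_K\rangle\gtrsim 1/n$, whose partial sums diverge like $\sum 1/n$.

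The lower bound rests on three ingredients. (a) \emph{Spread-out property}: after raising $\mu$ to a sufficient convolution power and exploiting symmetry, one may assume $\mu\ge\alpha\mathds{1}_U\,m_G$ on a compact symmetric identity neighborhood $U$, giving $P$ a bona fide integral kernel bounded below near the diagonal. (b) \emph{Diffusive displacement}: from symmetry and compactness of $\supp(\mu)$, establish the second-moment estimate $\mathbb{E}_x[d_X(\Phi_n,x)^2]\le Cn$, where $d_X$ is the quotient of a word length $\ell_G$ on $G$ with respect to a compact generating neighborhood $V\supseteq\supp(\mu)$. Markov's inequality then gives $\Pr_x[\Phi_n\in V^{C\sqrt n}x]\ge\tfrac{1}{2}$ uniformly in $n$ and $x$. (c) \emph{Growth}: the at-most-quadratic growth hypothesis yields $m_X(V^{C\sqrt n}x_0)\lesssim n$.

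Combining (a)-(c) via Cauchy-Schwarz and self-adjointness of $P$,
\begin{align*}
\tfrac{1}{4} m_X(K)^2 \le \biggl(\int_{V^{C\sqrt n}x_0} P^n\mathds{1}_K(y)\,dm_X(y)\biggr)^2 \le m_X(V^{C\sqrt n}x_0)\,\|P^n\mathds{1}_K\|_2^2 \lesssim n\,\langle P^{2n}\mathds{1}_K,\mathds{1}_K\rangle,
\end{align*}
so $\langle P^{2n}\mathds{1}_K,\mathds{1}_K\rangle\gtrsim 1/n$ and summing contradicts transience.

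I expect the diffusive estimate in (b) to be the main obstacle. It is immediate for abelian, and more generally nilpotent, $G$ via the centering of a symmetric increment distribution, but on a general locally compact $G$ the notion of ``centering'' is subtler. The standard route is a Varopoulos-style argument that uses the self-adjointness of $P$ to control the Dirichlet energy of the word length against the walk, yielding $\mathbb{E}[\ell_G(Y_n\cdots Y_1)^2]\le Cn$; the bound then transfers to $X$ because the quotient pseudometric on $X$ is dominated by $\ell_G$. A subsidiary subtlety is that the growth hypothesis is placed on $X$, not $G$; the argument above respects this by applying volume estimates only to balls in $X$, while the diffusive bound itself is purely a property of the walk on $G$ and makes no appeal to growth.
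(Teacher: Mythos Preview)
Your overall architecture---contradiction via Hennion--Roynette, then Cauchy--Schwarz to reduce to an escape bound for $P^n\mathds{1}_K$ outside a ball of the right radius---is exactly the paper's. The gap is entirely in step (b).

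The diffusive bound $\E\sqbr{\ell_G(Y_n\dotsm Y_1)^2}\le Cn$ on $G$ is \emph{false} in the generality you need. The hypothesis is that $X=G/\Gamma$ has at most quadratic growth, not that $G$ does; in the paper's own covering-space example $G$ may be $\SL_2(\R)$ or any simple Lie group, which has exponential growth, and any compactly supported symmetric random walk on such $G$ has positive linear speed, so $\E\sqbr{\ell_G(Y_n\dotsm Y_1)^2}\asymp n^2$. Your proposed transfer to $X$ via $d_X\le \ell_G$ therefore gives nothing, and your remark that ``the diffusive bound itself is purely a property of the walk on $G$ and makes no appeal to growth'' is precisely where the argument breaks. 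There is no Varopoulos-style Dirichlet-form argument that produces a second-moment bound from self-adjointness alone; what self-adjointness does give is the Carne--Varopoulos Gaussian tail, which comes with an unavoidable volume correction.

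The paper's fix avoids moments altogether. It works intrinsically on $X$, expanding $P^n=\sum_k\alpha_{k,n}Q_k(P)$ in Chebyshev polynomials, using that $Q_k(P)$ has $L^2$-norm $\le 1$ (self-adjointness, spectrum in $[-1,1]$) and that $Q_k(P)\mathds{1}_A$ is supported in $V^{k+1}\Gamma$ (support condition), and then estimating $\sum_{k\ge\ell}\alpha_{k,n}\le 2e^{-\ell^2/(2n)}$. This yields
\[
\langle P^n\mathds{1}_A,\mathds{1}_{(V^\ell\Gamma)^c}\rangle\le 2\,m_X(A)^{1/2}m_X(V^{n+1}\Gamma)^{1/2}e^{-\ell^2/(2n)},
\]
so to make the right-hand side $\le\tfrac12 m_X(A)$ one must take $\ell\asymp\sqrt{n\log m_X(V^{n+1}\Gamma)}\asymp\sqrt{n\log n}$ rather than $\sqrt{n}$. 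Quadratic growth then gives $m_X(V^\ell\Gamma)\lesssim n\log n$ and hence $\langle P^{2n}\mathds{1}_A,\mathds{1}_A\rangle\gtrsim 1/(n\log n)$, which still diverges. Note also that the spread-out hypothesis plays no role in this lower bound; your item (a) is not needed here (it enters only through the Hennion--Roynette dichotomy).
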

Here the requirement of $\mu$ being \emph{symmetric} means that $\mu(B)=\mu(B^{-1})$ for all measurable $B\subset G$. 

Once Harris recurrence is established, we have an analogue of \eqref{equidist} in the form of a ratio limit theorem. 
\begin{theorem}\label{thm:ratio_limit_thm}
Let $\mu$ be an adapted spread out probability measure on $G$. 
Suppose that $X$ admits a Haar measure $m_X$ and that the random walk on $X$ given by $\mu$ is topologically Harris recurrent. 
Then for any $x_1,x_2\in X$ and two bounded measurable functions $f_1,f_2$ on $X$ with compact support such that $f_2\ge 0$ and $\int_Xf_2\dd m_X\neq 0$ we have 
\begin{align*}
\frac{\sum_{j=0}^n\int_Xf_1\dd(\mu^{*j}*\delta_{x_1})}{\sum_{j=0}^n\int_Xf_2\dd(\mu^{*j}*\delta_{x_2})}\longrightarrow\frac{\int_Xf_1\dd m_X}{\int_Xf_2\dd m_X}
\end{align*}
as $n\to\infty$. 
If $\mu$ is additionally symmetric and aperiodic, then 
\begin{align}\label{strong_ratio}
\frac{\int_Xf_1\dd(\mu^{*n}*\nu_1)}{\int_Xf_2\dd(\mu^{*n}*\nu_2)}\longrightarrow\frac{\int_Xf_1\dd m_X}{\int_Xf_2\dd m_X}
\end{align}
as $n\to\infty$ for any two probability measures $\nu_1,\nu_2\ll m_X$ with bounded density. 
\end{theorem}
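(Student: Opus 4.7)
The plan is to deduce both assertions from the classical ratio ergodic theory for Harris recurrent Markov chains, applied to the transition operator $Pf(x)=\int_G f(gx)\dd\mu(g)$. Under the standing hypotheses the paper has already established that $(\Phi_n)$ is a topologically Harris recurrent $T$-chain on $X$ whose unique (up to scaling) invariant Radon measure is $m_X$, and that $P$ inherits strong smoothing properties from the spread out assumption. These are exactly the ingredients the theory requires.

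For the first (Cesàro) statement, I would apply the Chacon--Ornstein ratio ergodic theorem to $P$ on $L^1(m_X)$: Harris recurrence supplies conservativity, uniqueness of $m_X$ supplies ergodicity, and Chacon--Ornstein then produces the ratio convergence $m_X$-almost everywhere in the starting point. To upgrade this to \emph{every} pair $x_1,x_2\in X$ I would use the spread out condition, which gives some $n_0$ for which $\mu^{*n_0}$ dominates a non-trivial absolutely continuous measure $\rho$: applying the a.e.\ statement after $n_0$ preliminary steps and integrating against the density of $\rho$ turns it into a pointwise one, the same $T$-chain smoothing trick the paper already exploits in the finite volume case.

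For the strong ratio limit \eqref{strong_ratio}, the plan exploits the symmetry of $\mu$: a change of variables $x\mapsto y^{-1}x$ using $G$-invariance of $m_X$ shows that $P$ is self-adjoint on $L^2(m_X)$. Writing $h_i=\dd\nu_i/\dd m_X$, the bounded-density hypothesis places $h_i\in L^1\cap L^\infty\subset L^2(m_X)$, and since $f_i$ is bounded with compact support also $f_i\in L^2(m_X)$, so that
\[
\int_X f_i\dd(\mu^{*n}*\nu_i)=\langle P^n f_i,h_i\rangle_{L^2(m_X)}
\]
is within reach of the spectral calculus of the self-adjoint contraction $P$. Aperiodicity rules out spectral atoms at $-1$ (a measurable $\phi$ with $P\phi=-\phi$ would force $\mu$ onto the non-trivial coset of a proper open subgroup of $\G$ containing $[\G,\G]$), and topological Harris recurrence together with $m_X(X)=\infty$ rules out $L^2$-eigenfunctions at $+1$ (any such would be a non-zero constant by ergodicity, contradicting $L^2$-integrability). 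The strong ratio limit then reduces to an Abelian/Tauberian comparison of the spectral measures $\sigma_{f_i,h_i}$ in a one-sided neighborhood of $+1$, whose leading asymptotics factor universally as $\int_X f_i\dd m_X\cdot\int_X h_i\dd m_X$; since $\nu_i$ is a probability measure $\int_X h_i\dd m_X=1$, the density contributions cancel in the ratio and \eqref{strong_ratio} follows.

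The principal obstacle is this last spectral/Abelian step: in infinite volume the relevant spectral measures concentrate near the edges of the spectrum, so matching leading-order asymptotics of numerator and denominator — rather than merely showing that both tend to zero — is delicate. I expect the cleanest path is to reduce to a comparison on a common dense core of continuous compactly supported functions obtained from the smoothing $\mu^{*n_0}\ge\rho$, and then to adapt the symmetric random walk argument of Hennion--Roynette for groups to the homogeneous space, which should be essentially formal given the Harris recurrent $T$-chain framework already developed in this paper.
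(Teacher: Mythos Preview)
Your outline for the Ces\`aro statement has a real gap. Chacon--Ornstein gives, for $m_X$-a.e.\ \emph{single} point $y$, that $\sum_{j\le n}P^jf_1(y)\big/\sum_{j\le n}P^jf_2(y)\to\int f_1/\int f_2$; it says nothing about the ratio with two different starting points $x_1,x_2$. The smoothing step you describe does not close this: the absolutely continuous component of $P^{n_0}(x,\cdot)$ obtained from $\mu^{*n_0}\ge\rho$ depends on $x$, and even if it did not, integrating an a.e.\ ratio convergence against a density does not yield convergence of the ratio of integrals without some uniform domination. What is actually needed is the Harris-chain ratio limit theorem itself (equivalently, that $\sum_{j\le n}P^j(x_1,A)\big/\sum_{j\le n}P^j(x_2,A)\to 1$ for all $x_1,x_2$ and $m_X$-positive $A$), which relies on the regenerative structure of Harris chains rather than on Chacon--Ornstein. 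The paper simply invokes this result from Revuz.

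For the strong ratio limit your spectral plan is a genuinely different route from the paper's, but the part you yourself flag as the ``principal obstacle'' is exactly the heart of the matter and is not resolved by what you write. The claim that the leading asymptotics of $\langle P^nf_i,h_i\rangle=\int_{-1}^{1}t^n\dd\sigma_{f_i,h_i}(t)$ ``factor universally'' as $\int f_i\dd m_X\cdot\int h_i\dd m_X$ times a common profile is precisely the strong ratio limit property, and it is not a formal consequence of self-adjointness plus absence of point spectrum at $\pm 1$; one needs quantitative control of the spectral measures near the top of the spectrum. Your fallback to Hennion--Roynette does not help, since that work concerns recurrence versus transience, not ratio limits. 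The paper instead works in Nummelin's framework: it produces a small set $A$ with minorizing measure $\lambda=m_A$ and even order $k$, uses reversibility (via a cited result for symmetric chains) to verify the key one-step ratio condition $\lambda P^{km}(A)/\lambda P^{k(m-1)}(A)\to 1$, and then checks that bounded compactly supported functions and bounded-density probability measures are ``small'' in Nummelin's sense using the $T$-chain/petiteness machinery already developed. That packaging isolates the hard Tauberian input into a single verifiable condition, which your proposal does not.
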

\begin{remark}\label{rmk:conjecture}
We conjecture that \eqref{strong_ratio} also holds with Dirac measures $\delta_{x_1},\delta_{x_2}$ in place of $\nu_1,\nu_2$ for arbitrary $x_1,x_2\in X$. 
Unfortunately, we can only prove this under the additional condition 
\begin{align}\label{sufficient}
\limsup_{n\to\infty}\frac{(\mu^{*n}*\delta_{x_i})(A)}{(\mu^{*n}*m_A)(A)}\le 1
\end{align}
for $i=1,2$, where $A$ is a certain \enquote{small} subset of $X$ (see the proof of Theorem~\ref{thm:ratio_limit_thm} in~\S\ref{subsec:ratio_limit}) and $m_A=\frac{1}{m_X(A)}m_X|_A$ is the normalized restriction of $m_X$ to $A$. 
\end{remark}

A standard example to which the previous results apply is the following. 
\begin{example}[Covering spaces]
Let $G$ be a connected real Lie group, $\Gamma'\subset G$ a cocompact lattice and  $\Gamma\subset \Gamma'$ a normal subgroup. 
Then $X=G/\Gamma$ is a $\Gamma'/\Gamma$-cover of $G/\Gamma'$, so that $X$ has at most quadratic growth if this is the case for the discrete group $\Gamma'/\Gamma$. 
\end{example}
For simple non-compact Lie groups of real rank $1$, symmetric finitely supported measures $\mu$, and $\Gamma'/\Gamma\cong\Z$ or $\Z^2$, recurrence in the above example has been known (Conze--Guivarc'h \cite[Proposition~4.5]{CG}). 
The corresponding recurrence result under our conditions is new. 
\subsection{Examples of Spread Out Measures}
We conclude this introduction by shedding some more light on the nature of spread out measures. 
Naturally, the first examples coming to mind are measures absolutely continuous with respect to Haar measure on $G$. 
However, the class of spread out measures is much larger and also contains many interesting singular measures, as the following examples aim to illustrate. 
\begin{example}[Affine random walks on the torus]\label{ex:torus}
An affine transformation on the torus $\mathbb{T}^n=\R^n/\Z^n$ is a map of the form 
\begin{align}\label{affine_trans}
\mathbb{T}^n\ni x\mapsto gx+v,
\end{align}
where $g\in\SL_n(\Z)$ is a unimodular integer matrix and $v\in\R^n$ is a translation vector. 
They fit into our setup in the following way: 
The group $G$ is the semidirect product $\SL_n(\Z)\ltimes \R^n$ with group law $(g,v)(h,w)=(gh,gw+v)$ and the lattice is given by $\Gamma=\SL_n(\Z)\ltimes \Z^n$. 
Then $\mathbb{T}^n\cong X=G/\Gamma$, an element $(g,v)\in G$ acts on $x\in X$ precisely by \eqref{affine_trans}, and an affine random walk on the torus is described by a measure $\mu$ on $G$. 

We shall now explain when such a measure $\mu$ is spread out in two cases. 
Let us write $\lambda_v$ for the pushforward of a measure $\lambda$ on $\R$ to a line $\R v\subset \R^n$ via $t\mapsto tv$. 
\begin{enumerate}
\item The simplest case is when the linear part of the random walk is deterministic, given by a single matrix $a\in\SL_n(\Z)$. 
For the measure $\mu$, this means that $\mu=\delta_a\otimes \mu_{\mathrm{trans}}$ for some probability measure $\mu_{\mathrm{trans}}$ on $\R^n$ giving the distribution of the translational part. 
When $\mu_{\mathrm{trans}}$ has $n$-dimensional density, already $\mu$ is not singular with respect to Haar measure $m_G=m_{\mathrm{count}}\otimes m_{\R^n}$ on $G$, and so in particular spread out. 
However, we can do much better than that: 
It often suffices for $\mu_{\mathrm{trans}}$ to have density in only one direction. 
More precisely, let $\lambda$ be a probability measure on $\R$ that is not singular with respect to Lebesgue measure, $v\in\R^n$ a unit vector, and $\mu_{\mathrm{trans}}=\lambda_v$. 
Then $\mu=\delta_a\otimes \lambda_v$ is spread out if and only if $\set{v,av,\dots,a^{n-1}v}$ spans $\R^n$. 
\item A similar characterization is possible when the linear and translational parts of $\mu$ are only assumed to be independent, i.e.\ if $\mu=\mu_{\mathrm{lin}}\otimes \mu_{\mathrm{trans}}$ for some probability measures $\mu_{\mathrm{lin}}$ on $\SL_n(\Z)$ and $\mu_{\mathrm{trans}}$ on $\R^n$. 
Aiming to introduce as little density as possible, we again suppose $\mu_{\mathrm{trans}}=\lambda_v$ for some $\lambda$ non-singular with respect to Lebesgue measure on $\R$ and a unit vector $v\in\R^n$. 
Then $\mu=\mu_{\mathrm{lin}}\otimes \lambda_v$ is spread out if and only if $v$ is not contained in a proper $\supp(\mu_{\mathrm{lin}})$-invariant subspace of $\R^n$. 
For example, this is automatically the case under the common assumption that the semigroup $S$ generated by $\supp(\mu_{\mathrm{lin}})$ acts irreducibly on $\R^n$. 
\end{enumerate}

The justification of the claims in the two points above is the following observation: 
If $\eta$ is a measure on a subspace $V\subset \R^n$ non-singular with respect to Lebesgue measure on that subspace, then by definition of the group law on $G$ we have 
\begin{align*}
\mu*(\delta_s\otimes \eta)\gg \delta_{as}\otimes \eta'
\end{align*}
for any $a\in\supp(\mu_{\mathrm{lin}})$ and $s\in \SL_n(\Z)$, where $\eta'$ is supported on $V'=\R v+aV$ and again non-singular with respect to Lebesgue measure on that space. 
In other words, in each convolution step we can pass from a density on $V$ to a density on $V'=\R v+aV$ for any $a\in\supp(\mu_{\mathrm{lin}})$. 
Starting from $\eta= \lambda_v$ and $V=\R v$, the question of whether $\mu$ is spread out is thus equivalent to asking if it is possible to reach $V'=\R^n$ in finitely many such steps. 
With a little work, this yields the stated conditions. 
\end{example}
\begin{example}
Let $G=\SL_2(\R)$ and 
\begin{align*}
U=\set*{u_s=\begin{pmatrix}1&s\\0&1\end{pmatrix}\for s\in \R}\cong \R
\end{align*}
be the upper unipotent subgroup. 
Furthermore, let $f\colon U\to[0,\infty)$ be any continuous density with $f(u_0)>0$ and $\int_Uf\dd s=1$, set $\dd\mu_U=f\dd s$ and $u_-=(\begin{smallmatrix}1&0\\1&1\end{smallmatrix})$. 
Then for the probability measure 
\begin{align*}
\mu=\tfrac12(\mu_U+\delta_{u_-}),
\end{align*}
the fifth convolution power $\mu^{*5}$ has a non-trivial absolutely continuous component, as a calculation shows. 
(For example, observe that in a neighborhood of the origin, $(a,b,c)\mapsto u_au_-u_bu_-u_c$ is a smooth chart of a neighborhood of $u_-^2$ inside $G$.) 
Hence, $\mu$ is singular with respect to Haar measure, yet spread out. 
\end{example}
\subsection*{Acknowledgments}
The author would like to thank Cagri Sert for suggesting the use of Markov chain theory in the study of random walks on homogeneous spaces, as well as for numerous helpful discussions and remarks on this article. 
Thanks also go to Manfred Einsiedler for pointing out the possibility of including infinite volume spaces and Andreas Wieser for valuable comments on draft versions of the article. 
\section{Markov Chain Theory for Random Walks}\label{sec:markov_theory}
In this section, we lay the foundations for all following discussions. 
We review the relevant concepts and results from general state space Markov chain theory in~\S\ref{subsec:prelim}, and make the connection to spread out random walks in~\S\ref{subsec:T_chains}. 
Throughout, an important reference is going to be Meyn and Tweedie's comprehensive book~\cite{MT}. 
\subsection{Preliminaries}\label{subsec:prelim}
We start with preliminaries from general state space Markov chain theory. 
Readers familiar with the subject may skip this subsection and only consult it for notation, when necessary. 

Even though large parts of the theory are valid under the mere assumption that the state space is a measurable space endowed with a countably generated $\sigma$-algebra, for us it is not going to be a restriction to assume that $X$ is a $\sigma$-compact locally compact metrizable space endowed with its Borel $\sigma$-algebra $\B$. 

The first notion to introduce is that of a \emph{transition kernel} on $X$: 
This is a map $P\colon X\times\B\to[0,\infty]$ such that $P(x,\cdot)$ is a Borel measure on $X$ for every $x\in X$ and $x\mapsto P(x,A)$ is measurable for every $A\in \B$. 
It acts on functions $f$ on $X$ from the left and on measures $\nu$ on $X$ from the right by virtue of 
\begin{align*}
Pf(x)=\int_XP(x,\dd y)f(y)\quad\text{and}\quad\nu P(A)=\int_X\nu(\dd x)P(x,A)
\end{align*}
for $x\in X$ and $A\in \B$. 
A transition kernel is called \emph{stochastic} if every $P(x,\cdot)$ is a probability measure, and \emph{substochastic} if $P(x,X)\le 1$ for every $x\in X$. 
A $\sigma$-finite measure $\nu$ on $X$ is called \emph{$P$-subinvariant} if $\nu P\le \nu$ and \emph{$P$-invariant} if $\nu P=\nu$. 
When the transition kernel is clear from context, we just speak of \emph{\textup{(}sub\textup{)}invariant} measures. 
The powers $P^n$ of $P$ are defined inductively by $P^0(x,\cdot)=\delta_x$ and $P^n(x,A)=\int_XP^{n-1}(x,\dd y)P(y,A)$ for $n\in\N$, which generalizes to the \emph{Chapman--Kolmogorov equations} 
\begin{align*}
P^{m+n}(x,A)=\int_XP^m(x,\dd y)P^n(y,A)
\end{align*}
for $x\in X$, $A\in\B$, and $m,n\in\N$. 

A \emph{Markov chain} on $X$ is an $X$-valued stochastic process $\Phi=(\Phi_n)_{n\in\N_0}$ whose steps are governed by a stochastic transition kernel. 
Formally, this means that there exists a starting distribution $\nu$ on $X$ and a stochastic transition kernel $P$ on $X$ such that 
\begin{align*}
\Pro\sqbr{\Phi_0\in A_0,\dots,\Phi_n\in A_n}=\int_{x_0\in A_0}\dotsi\int_{x_{n-1}\in A_{n-1}}\nu(\dd x_0)P(x_0,\dd x_1)\dotsm P(x_{n-1},A_n)
\end{align*}
for every $n\in\N_0$ and $A_0,\dots,A_n\in\B$. 
This formula (specifically, the absence of the variables $x_0,\dots,x_{k-1}$ in the term $P(x_k,\dd x_{k+1})$) captures the quintessential idea behind a Markov chain that the distribution of the following state $\Phi_{n+1}$ depends only on the current state $\Phi_n$ via the transition kernel $P$. 
In terms of conditional distributions, this dependence can be expressed as 
\begin{align*}
\mathcal{L}(\Phi_{n+1}|\Phi_n=x,\Phi_{n-1},\dots,\Phi_0)=\mathcal{L}(\Phi_{n+1}|\Phi_n=x)=P(x,\cdot).
\end{align*}
It may be shown that a Markov chain on $X$ exists for every fixed starting distribution $\nu$ and stochastic transition kernel $P$ (\cite[Theorem~3.4.1]{MT}). 
In fact $\Phi$ may always be assumed to be the canonical coordinate process on $X^{\N_0}$; only the probability measure $\Pro$ on $X^{\N_0}$ needs to be chosen accordingly. 
It is customary to regard the starting distribution as variable and think of a Markov chain on $X$ as being defined by the transition kernel $P$ alone. 
The probability measure on $X^{\N_0}$ making the canonical process into a Markov chain with starting distribution $\nu$ is then denoted by $\Pro_\nu$. 
When $\nu=\delta_x$ is the Dirac mass at some $x\in X$, one simply writes $\Pro_x$. 
The associated expectations are denoted $\E_\nu$ and $\E_x$, respectively. 
\begin{example}\label{ex:RW}
The random walk on $X=G/\Gamma$ given by a probability measure $\mu$ on $G$ is a Markov chain with transition kernel 
\begin{align*}
P(x,\cdot)=\mu*\delta_x.
\end{align*}
Its powers are given by $P^n(x,\cdot)=\mu^{*n}*\delta_x$, where $\mu^{*n}$ is the $n$-th convolution power of $\mu$, defined inductively by $\mu^{*0}=\delta_e$, where $e\in G$ is the identity element, and $\mu^{*n}=\int_Gg_*\mu^{*(n-1)}\dd\mu(g)$ for $n\in\N$. 
Equivalently, $\mu^{*n}$ is the law of a product $Y_n\dotsm Y_1$ of i.i.d.\ random variables $Y_1,\dots,Y_n$ in $G$ with distribution $\mu$. 
If $\mathcal{L}_x$ denotes the law under $\Pro_x$ for some $x\in X$, we thus have 
\begin{align*}
\mathcal{L}_x(\Phi_n)=P^n(x,\cdot)=\delta_xP^n=\mu^{*n}*\delta_x,
\end{align*}
and, more generally, for a starting distribution $\nu$ on $X$, 
\begin{align*}
\mathcal{L}_\nu(\Phi_n)=\nu P^n=\mu^{*n}*\nu.&\qedhere
\end{align*}
\end{example}

Let us next introduce a few important quantities associated to a Markov chain. 
The \emph{first return time} $\tau_A$ and \emph{occupation time} $\eta_A$ of a set $A\in\B$ are defined by 
\begin{align*}
\tau_A=\min\set{n\ge 1\for \Phi_n\in A},\quad\eta_A=\sum_{n=1}^\infty \mathds{1}_{\Phi_n\in A},
\end{align*}
and the return probability and expected number of visits to $A$ starting from $x$ are 
\begin{align*}
L(x,A)=\Pro_x\sqbr{\tau_A<\infty},\quad U(x,A)=\E_x\sqbr{\eta_A}=\sum_{n=1}^\infty P^n(x,A),
\end{align*} 
respectively. 
Note that $U\colon X\times \B\to[0,\infty]$ is a transition kernel on $X$. 

We now address the notion of $\psi$-irreducibility, which was already mentioned in~\S\ref{sec:intro}. 
A $\sigma$-finite measure $\varphi$ on $X$ is called an \emph{irreducibility measure} for a Markov chain on $X$ if for every $A\in\B$ with $\varphi(A)>0$ we have $L(x,A)>0$ for all $x\in X$. 
In other words, this means that any $\varphi$-positive set can be reached from everywhere with positive probability. 
The Markov chain is called \emph{$\psi$-irreducible} if it admits a non-trivial irreducibility measure. 
In this case, it can be shown that there exists a \emph{maximal irreducibility measure}, that is, an irreducibility measure $\psi$ with the property that every other irreducibility measure is absolutely continuous with respect to $\psi$ (\cite[Proposition~4.2.2]{MT}). 
Without loss of generality one may assume $\psi$ to be a probability measure. 
By definition, the measure class of a maximal irreducibility measure is uniquely determined by the Markov chain (i.e.\ by its defining transition kernel $P$). 
This justifies the implicit understanding (and slight abuse of notation) common in the literature that, given a $\psi$-irreducible Markov chain, $\psi$ always denotes an associated maximal irreducibility measure. 

For $\psi$-irreducible chains there is a recurrence/transience dichotomy similar to the classical discrete theory. 
To state it, we call a set $A\subset X$ \emph{uniformly transient} if the expected number of returns to $A$ is bounded on $A$, i.e.\ if $\sup_{x\in A}U(x,A)<\infty$, and \emph{recurrent} if the expected number of returns is infinite on all of $A$, i.e.\ if $U(x,A)=\infty$ for all $x\in A$. 
\begin{theorem}[{\cite[Theorem~8.0.1]{MT}}]\label{thm:recurrence}
Suppose $\Phi$ is $\psi$-irreducible. 
Then either 
\begin{enumerate}
\item every $\psi$-positive set is recurrent, in which case $\Phi$ is called \emph{recurrent}, or 
\item the state space $X$ can be covered by countably many uniformly transient sets, in which case $\Phi$ is called \emph{transient}. 
\end{enumerate}
\end{theorem}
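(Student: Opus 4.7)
The plan is to follow the classical Meyn--Tweedie approach via small sets and a maximum principle for the potential kernel $U$.

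The first step is to invoke the existence of a \emph{small set}: since $\Phi$ is $\psi$-irreducible and the state space is countably generated, there exists $C\in\B$ with $\psi(C)>0$, an integer $m\ge 1$, and a non-trivial measure $\nu_m$ on $X$ such that the minorization $P^m(x,\cdot)\ge \nu_m(\cdot)$ holds uniformly for $x\in C$ (a standard result going back to Jain--Jamison / Orey). This $C$ will play the role of a reference set through which every $\psi$-positive region is accessible.

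The second step is to record the \emph{maximum principle} for $U$. Applying the strong Markov property at the first hitting time $\tau_A$ gives
\begin{align*}
U(x,A)=L(x,A)+\E_x\bigl[\mathds{1}_{\tau_A<\infty}\,U(\Phi_{\tau_A},A)\bigr]\le L(x,A)\bigl(1+\sup_{y\in A}U(y,A)\bigr),
\end{align*}
since $\Phi_{\tau_A}\in A$ on $\{\tau_A<\infty\}$. In particular, if $A$ is uniformly transient then $U(\cdot,A)$ is bounded on \emph{all} of $X$.

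The dichotomy is then decided on $C$. Suppose first that $C$ is uniformly transient, with $M\coloneqq \sup_{y\in C}U(y,C)<\infty$. Set $A(j,n)\coloneqq\set{x\in X\for P^n(x,C)\ge 1/j}$ for $j,n\ge 1$. By $\psi$-irreducibility and the fact that $\psi(C)>0$, $L(x,C)>0$ for every $x$, so $X=\bigcup_{j,n}A(j,n)$. The Chapman--Kolmogorov estimate
\begin{align*}
P^{m+n}(x,C)\ge \int_{A(j,n)}P^m(x,\dd y)\,P^n(y,C)\ge \tfrac{1}{j}P^m(x,A(j,n))
\end{align*}
summed over $m\ge 0$ yields $U(x,A(j,n))\le j\,U(x,C)\le j(1+M)$ via the maximum principle, so each $A(j,n)$ is uniformly transient and (ii) holds. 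Suppose conversely that $C$ is not uniformly transient; then a Nummelin-type splitting / regenerative argument, exploiting the minorization on $C$, upgrades $\sup_C U(\cdot,C)=\infty$ to $U(y,C)=\infty$ for \emph{every} $y\in C$. Finally, for an arbitrary $\psi$-positive set $A$ and arbitrary $x\in A$, $\psi$-irreducibility forces the chain started at $x$ to reach $C$ infinitely often (using recurrence of $C$ and the strong Markov property), and each visit to $C$ triggers, via the minorization $\nu_m$, a visit to $A$ within $m$ further steps with a fixed positive probability depending only on $\nu_m(B)$ for a suitable $B$ accessing $A$; a Borel--Cantelli argument then yields $U(x,A)=\infty$, which is (i).

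The main technical hurdle is the propagation step: upgrading the bare minorization on $C$ plus $\sup_C U(\cdot,C)=\infty$ into $U(\cdot,C)\equiv\infty$ on $C$, and then transporting this recurrence to every $\psi$-positive set. Both are cleanest via Nummelin's splitting construction, which turns $C$ into a genuine atom of an extended chain; everything else is bookkeeping with Chapman--Kolmogorov and the strong Markov property.
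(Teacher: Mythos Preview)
The paper does not prove this theorem: it is quoted verbatim from Meyn--Tweedie \cite[Theorem~8.0.1]{MT} as background and no argument is given in the paper itself. So there is no ``paper's own proof'' to compare against.

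That said, your sketch is a faithful outline of the standard Meyn--Tweedie/Nummelin argument that actually appears in \cite{MT}: existence of a $\psi$-positive small set, the maximum principle for $U$, the covering $X=\bigcup_{j,n}A(j,n)$ in the transient case, and the splitting construction to manufacture an atom in the recurrent case. The transient half (your Step~3) is essentially complete; only the bookkeeping constants need tidying, since what you bound is $\sum_{m\ge 0}P^m(x,A(j,n))$ rather than $U(x,A(j,n))$ directly, but this is harmless. The recurrent half is, as you yourself flag, the real content: the passage from $\sup_C U(\cdot,C)=\infty$ to $U\equiv\infty$ on $C$, and then to arbitrary $\psi$-positive sets, genuinely needs the split chain (or an equivalent regenerative device), and your final Borel--Cantelli paragraph is too quick as written---one must know that the minorizing measure $\nu_m$ is itself an irreducibility measure (equivalently, that $\nu_m$-positive sets lead to every $\psi$-positive set), which is not immediate from the bare definition of a small set. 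In \cite{MT} this is handled by first proving the dichotomy for chains with an accessible atom and then transferring via splitting; your outline points in exactly that direction, so the plan is sound even if the execution of the last step is only gestured at.
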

We emphasize that $\psi$-irreducibility is included in these definitions of recurrence and transience. 
For recurrent chains, one has the following conclusion about invariant measures. 
\begin{theorem}[{\cite[Theorem~10.4.9]{MT}}]\label{thm:inv_meas}
Suppose $\Phi$ is recurrent. 
Then there exists a non-trivial $\sigma$-finite invariant measure $\pi$, which is unique up to scalar multiples. 
Moreover, $\pi$ is a maximal irreducibility measure. 
\end{theorem}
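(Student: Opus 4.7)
The plan is to follow the classical Nummelin splitting route, which reduces the general state-space problem to the atomic case where invariant measures can be written down explicitly via excursion formulas. The key structural input is that in any $\psi$-irreducible chain every $\psi$-positive set contains a \emph{small set}: a $\psi$-positive $C\in\B$ for which there exist $m\in\N$, $\beta>0$ and a probability measure $\nu$ on $X$ with
\begin{align*}
P^m(x,A)\ge \beta\,\nu(A)\qquad\text{for all }x\in C,\ A\in\B.
\end{align*}

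Given such a small set, Nummelin's splitting construction (applied to $P^m$) enlarges the state space to $\check X=X\times\{0,1\}$ and produces a stochastic kernel $\check P$ on $\check X$ with two essential features: (i) the set $\alpha\coloneqq C\times\{1\}$ is a genuine atom, in the sense that $\check P(\check x,\cdot)$ equals one fixed probability measure for every $\check x\in\alpha$; and (ii) the projection of the associated chain $\check\Phi$ back to $X$ has the same law as the $m$-skeleton of $\Phi$. Recurrence of the latter implies that $\alpha$ is recurrent: $\Pr_\alpha[\tau_\alpha<\infty]=1$. With a genuine recurrent atom in hand, one defines
\begin{align*}
\check\pi(B)\coloneqq\E_\alpha\!\left[\sum_{n=0}^{\tau_\alpha-1}\mathds{1}_B(\check\Phi_n)\right],\qquad B\in\check\B,
\end{align*}
which is $\check P$-invariant by the strong Markov property at $\tau_\alpha$, non-trivial since $\check\pi(\alpha)=1$, and $\sigma$-finite because $\check X$ is covered by countably many sets of the form $\{n<\tau_\alpha,\,\check\Phi_n\in K\}$ with $K\subset X$ compact. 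Projecting $\check\pi$ down along $\check X\to X$ and, if necessary, averaging over $P^0,\dots,P^{m-1}$ to pass from $P^m$-invariance to $P$-invariance, yields the sought $\sigma$-finite invariant measure $\pi$.

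Uniqueness and maximal irreducibility are then handled in parallel. For uniqueness, any $\sigma$-finite $P$-invariant measure $\pi'$ lifts to a $\check P$-invariant measure $\check\pi'$ on $\check X$; restricted to the atom $\alpha$ this lift must be a scalar multiple of the point mass at $\alpha$, and the Kac excursion identity then forces $\check\pi'$ globally to be that same scalar multiple of $\check\pi$. For maximal irreducibility: if $\pi(A)>0$ then the excursion decomposition shows $A$ is visited with positive probability starting from $\alpha$, so by $\psi$-irreducibility of $\Phi$ also from every $x\in X$, making $\pi$ an irreducibility measure; conversely, if $\phi$ is any irreducibility measure and $\pi(A)=0$, then the excursion formula gives $\mathds{1}_A(\check\Phi_n)=0$ almost surely under $\Pr_\alpha$ for all $n<\tau_\alpha$, and recurrence combined with $\psi$-irreducibility propagates this to $\phi(A)=0$.

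The main obstacle I would expect is the clean execution of the Nummelin splitting and the verification that $\sigma$-finiteness and recurrence transfer correctly between $\Phi$, its $m$-skeleton, and the split chain $\check\Phi$. In particular, controlling the excursion measure $\check\pi$ requires exploiting $\psi$-irreducibility together with the strong Markov property at $\tau_\alpha$, and handling the passage from $P^m$ back to $P$ under possible periodicity is where the countable-state intuition is most subtly supplemented by general state-space bookkeeping.
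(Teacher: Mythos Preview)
The paper does not prove this theorem; it is stated in the preliminaries as a quotation of \cite[Theorem~10.4.9]{MT} without proof, so there is nothing to compare your argument against. Your outline via Nummelin splitting and the atomic excursion formula is indeed the route taken in that reference, and the overall strategy is correct.

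One minor correction: your $\sigma$-finiteness argument is garbled. The displayed objects $\{n<\tau_\alpha,\ \check\Phi_n\in K\}$ are events in the underlying probability space, not subsets of $\check X$, so they cannot form a cover of $\check X$. The standard argument instead shows that $\check\pi$ is finite on any set reachable from $\alpha$ with positive probability (since such a set is visited only finitely often per excursion in expectation), and then uses $\psi$-irreducibility of the split chain to cover $\check X$ by countably many such sets.
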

As in the classical theory, a further refinement of recurrence is possible: 
The chain is called \emph{positive} if it is $\psi$-irreducible and admits a non-trivial finite invariant measure. 
This forces the chain to be recurrent. 
\begin{proposition}[{\cite[Proposition~10.1.1]{MT}}]\label{prop:positive}
A positive chain is recurrent. 
In particular, a positive chain admits a unique invariant probability measure, which is a maximal irreducibility measure. 
\end{proposition}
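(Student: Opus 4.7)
The plan is to reduce the claim to the recurrence/transience dichotomy by arguing that a chain admitting a finite invariant measure cannot be transient. Once recurrence is established, the rest is a direct application of Theorem~\ref{thm:inv_meas}: a positive chain is $\psi$-irreducible and admits a $\sigma$-finite invariant measure unique up to scalars, which must then be a scalar multiple of our finite invariant measure; normalizing produces a unique invariant probability measure, automatically a maximal irreducibility measure.

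The heart of the matter is therefore the following: if $\Phi$ is $\psi$-irreducible and admits a finite invariant measure $\pi$, then $\Phi$ is recurrent. I would argue by contradiction. Suppose $\Phi$ is transient, so by the dichotomy there exist uniformly transient sets $A_1,A_2,\dots$ with $X=\bigcup_i A_i$ and $M_i\coloneqq\sup_{x\in A_i}U(x,A_i)<\infty$. Since $\pi(X)<\infty$, there must be some index $i$ with $\pi(A_i)>0$; fix such an index and set $A=A_i$, $M=M_i$.

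The key technical step is to promote the uniform bound on $U(x,A)$ from $x\in A$ to \emph{all} $x\in X$. For this, I would use a first-entrance decomposition combined with the strong Markov property at the first return time $\tau_A$: on the event $\{\tau_A=k\}$ one has $\Phi_k\in A$, so
\begin{align*}
\E_x\sqbr{\eta_A\cdot \mathds{1}_{\tau_A=k}}
=\Pr_x\sqbr{\tau_A=k}+\E_x\sqbr{\mathds{1}_{\tau_A=k}\cdot \E_{\Phi_k}\sqbr{\eta_A}}
\le (1+M)\Pr_x\sqbr{\tau_A=k}.
\end{align*}
Summing over $k\ge 1$ yields $U(x,A)=\E_x[\eta_A]\le 1+M$ uniformly in $x\in X$. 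Now invariance of $\pi$ gives
\begin{align*}
\int_X U(x,A)\dd\pi(x)=\sum_{n=1}^\infty\int_X P^n(x,A)\dd\pi(x)=\sum_{n=1}^\infty\pi(A)=\infty,
\end{align*}
since $\pi(A)>0$. This contradicts the bound $U(\cdot,A)\le 1+M$ integrated against the finite measure $\pi$. Hence $\Phi$ is recurrent.

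The main obstacle I anticipate is exactly the passage from uniform transience \emph{on} $A$ to a bound \emph{everywhere}; conceptually this is straightforward via the strong Markov property, but it is the one non-bookkeeping ingredient in the argument. Once recurrence holds, Theorem~\ref{thm:inv_meas} does all the remaining work: it supplies a $\sigma$-finite invariant measure unique up to scaling and identifies it as a maximal irreducibility measure, so $\pi$ (after normalization) is the unique invariant probability measure with both claimed properties.
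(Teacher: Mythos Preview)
The paper does not supply its own proof of this proposition; it simply cites \cite[Proposition~10.1.1]{MT}. Your argument is correct and is essentially the standard one found there: the first-entrance decomposition (your displayed inequality) is precisely how one promotes the bound $U(\cdot,A)\le M$ from $A$ to all of $X$, after which invariance of the finite measure $\pi$ forces the contradiction $\infty=\sum_n\pi(A)\le(1+M)\pi(X)<\infty$. The deduction of uniqueness and maximality from Theorem~\ref{thm:inv_meas} is likewise the intended route.
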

For this reason, positive chains are also called \emph{positive recurrent}. 

In the general theory, there is one more important notion of recurrence that does not appear in the discrete theory. 
Namely, in the latter, a recurrent state $x$ always satisfies $\Pro_x\sqbr{\tau_x<\infty}=1$, and hence by the Markov property also $\Pro_x\sqbr{\eta_x=\infty}=1$. 
Since in more general spaces there might be no returns to the precise starting point, such conclusions can no longer be drawn. 
Let us write 
\begin{align*}
Q(x,A)=\Pro_x\sqbr{\eta_A=\infty}
\end{align*}
for $x\in X$ and $A\in\B$, call the set $A$ \emph{Harris recurrent} if $Q(x,A)=1$ for every $x\in A$, and the whole chain $\Phi$ \emph{Harris recurrent} if it is $\psi$-irreducible and every $\psi$-positive set is Harris recurrent. 
Clearly, Harris recurrence implies recurrence. 
We call $\Phi$ \emph{positive Harris recurrent} if it is positive and Harris recurrent. 

The final notion we need to introduce is that of aperiodicity, which naturally plays a role in questions of convergence to a stable distribution. 
\begin{theorem}[{\cite[Theorem~5.4.4]{MT}}]\label{thm:markov_periodicity}
Let $\Phi$ be $\psi$-irreducible. 
Then there exists a maximal positive integer $d$, called the \emph{period} of $\Phi$, with the property that there exist pairwise disjoint sets $D_0,\dots,D_{d-1}\in\B$ such that $P(x,D_{i+1\bmod d})=1$ for each $x\in D_i$ and $i=0,\dots,d-1$ and such that the union $\bigcup_{i=0}^{d-1} D_i$ is $\psi$-full. 
\end{theorem}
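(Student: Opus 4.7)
The plan is to extract the period from the return-time structure to a small set and then build the cycle by tracking residues modulo $d$. The starting point is the existence of small sets for $\psi$-irreducible chains, developed earlier in \cite{MT}: there exist $C\in\B$, $M\ge 1$, and a non-trivial measure $\nu_M$ with $\nu_M(C)>0$ satisfying $P^M(x,\cdot)\ge\nu_M(\cdot)$ for all $x\in C$. The arithmetic input then comes from the set
\begin{align*}
E_C=\set{n\ge 1\for P^n(x,\cdot)\ge\delta_n\nu_M(\cdot)\text{ on }C\text{ for some }\delta_n>0}.
\end{align*}
Since $\nu_M(C)>0$, the Chapman--Kolmogorov equations force $E_C$ to be closed under addition: for $m,n\in E_C$ and $x\in C$, $P^{m+n}(x,A)\ge\int_C P^m(x,\dd y)P^n(y,A)\ge\delta_m\nu_M(C)\delta_n\nu_M(A)$. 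The standard elementary lemma then yields $kd\in E_C$ for all sufficiently large $k$, where $d\coloneqq\gcd E_C$.

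Next I construct preliminary cyclic classes by setting, for $i=0,\dots,d-1$,
\begin{align*}
D_i^*=\set{x\in X\for P^{nd+i}(x,C)>0\text{ for some }n\ge 0}.
\end{align*}
By $\psi$-irreducibility every $x\in X$ lies in at least one $D_i^*$. If $x\in D_i^*\cap D_j^*$ with $i\not\equiv j\pmod d$, then concatenating the corresponding hitting times with a return from $C$ to $C$ along a time in $E_C$ produces an element of $E_C$ not divisible by $d$, contradicting $d=\gcd E_C$. Hence the $D_i^*$ are disjoint modulo $\psi$-null sets.

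To pass from this almost-partition to the statement of the theorem, a measure-theoretic cleanup extracts pairwise disjoint Borel sets $D_i\subseteq D_i^*$ with $\bigcup_{i=1}^d D_i$ still $\psi$-full and $P(x,D_{i+1})=1$ for all $x\in D_i$ (indices mod $d$). For the maximality of $d$: given any alternative $d'$-cycle, one shrinks $C$ (using the flexibility in the small-set theory) to lie inside a single class of that cycle, whereupon every $n\in E_C$ must be a multiple of $d'$, so $d'\mid d$, proving that no cycle length exceeds $d$.

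The main obstacle is the bookkeeping needed to turn the almost-disjoint family $\{D_i^*\}$ into genuinely disjoint Borel sets while simultaneously preserving the transition property $P(x,D_{i+1})=1$ on a $\psi$-full set, and to verify that the resulting $d$ is intrinsic to the chain rather than an artifact of the particular reference small set $C$.
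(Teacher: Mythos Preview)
The paper does not prove this theorem; it is quoted verbatim from \cite[Theorem~5.4.4]{MT} as background and no argument is supplied. There is therefore nothing in the paper to compare your proposal against.

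That said, your sketch is essentially the proof given in \cite{MT} itself: extract a $\nu_M$-small set $C$ with $\nu_M(C)>0$, form the additive semigroup $E_C$, set $d=\gcd E_C$, and build cyclic classes from residues of hitting times to $C$. One point deserves tightening. In your disjointness step you write that $x\in D_i^*\cap D_j^*$ immediately yields an element of $E_C$ not divisible by $d$ by ``concatenating the corresponding hitting times with a return from $C$ to $C$''. But membership in $E_C$ requires the uniform minorization $P^n(x,\cdot)\ge\delta_n\nu_M$ for \emph{all} $x\in C$, not merely $P^n(x,C)>0$ for the single point $x$ in the overlap. The fix is to sandwich: if the overlap were $\psi$-positive, then $\nu_M$-positively many $y\in C$ would hit $C$ in a time $\not\equiv 0\pmod d$; pre- and post-composing with minorization steps from $E_C$ then manufactures a genuine element of $E_C$ off the sublattice $d\Z$, which is the desired contradiction. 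You flag the cleanup as the ``main obstacle'', but it is this minorization bookkeeping, rather than the measurable selection of the $D_i$, that carries the real content.
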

A collection of measurable sets $D_0,\dots,D_{d-1}$ as in the above theorem is referred to as a \emph{$d$-cycle} for $\Phi$. 
A $\psi$-irreducible chain with period $1$ is called \emph{aperiodic}. 
\subsection{\texorpdfstring{$T$}{T}-Chains}\label{subsec:T_chains}
As already pointed out, the notions of recurrence for $\psi$-irreducible chains require certain properties of returns to $\psi$-positive measurable sets from arbitrary starting points, not taking into account topological properties of the state space. 
Of course this makes sense, as the topology did not feature in any of the definitions up to this point. 
In order to connect the chain to the topology, one thus needs an additional concept. 
Several notions accomplishing this appear in the literature; the one best suited for the study of random walks is that of a \emph{$T$-chain} introduced by Tuominen--Tweedie~\cite{TT}. 
Its definition involves the \emph{sampling} of a transition kernel $P$: 
Given a probability distribution $a$ on $\N_0$, the \emph{sampled transition kernel $K_a$} with \emph{sampling distribution $a$} is defined by 
\begin{align*}
K_a=\sum_{n=0}^\infty a(n)P^n.
\end{align*}
\begin{definition}
A Markov chain $\Phi$ on $X$ given by a transition kernel $P$ is called a \emph{$T$-chain} if there exists a sampling distribution $a$ on $\N_0$ and a substochastic transition kernel $T$ on $X$ with 
\begin{enumerate}
\item $K_a(x,A)\ge T(x,A)$ for all $x\in X$ and $A\in\B$, 
\item $T(x,X)>0$ for all $x\in X$, and such that 
\item $T(\cdot,A)$ is lower semicontinuous for all $A\in \B$. 
\end{enumerate}
We call $T$ a \emph{continuous component of $P$}. 
\end{definition}
Let us outline the links the $T$-property establishes between recurrence and topology. 
We call a state $x_0\in X$ \emph{reachable} if $L(x,B)>0$ for every $x\in X$ and neighborhood $B$ of $x_0$, \emph{topologically Harris recurrent} if $Q(x_0,B)=1$ for each neighborhood $B$ of $x_0$, and \emph{topologically recurrent} if $U(x_0,B)=\infty$ for each neighborhood of $x_0$. 
If $x_0$ is not topologically recurrent, it is called \emph{topologically transient}. 
The first result we shall need infers $\psi$-irreducibility from the existence of a reachable state. 
\begin{proposition}[{\cite[Proposition~6.2.1]{MT}}]\label{prop:reachable-irreducible}
If a $T$-chain admits a reachable state, it is $\psi$-irreducible. 
\end{proposition}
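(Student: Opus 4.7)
The plan is to construct an explicit irreducibility measure anchored at the reachable state $x_0$, and then pass to a maximal one by a standard lifting. Let $T$ be a continuous component of $P$ with sampling distribution $a$, and let $\phi$ be the probability measure obtained by normalizing $T(x_0,\cdot)$; this is well-defined because $T(x_0,X)>0$ by property~(ii) of the $T$-chain definition.

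The core step is to verify that $\phi$ is an irreducibility measure. Given $A\in\B$ with $\phi(A)>0$, equivalently $T(x_0,A)>0$, lower semicontinuity of $T(\cdot,A)$ (property~(iii)) supplies $\epsilon>0$ and an open neighborhood $B$ of $x_0$ on which $T(\cdot,A)>\epsilon$. For arbitrary $x\in X$, reachability of $x_0$ yields $L(x,B)>0$, so there is some $n\ge 1$ with $P^n(x,B)>0$. Combining the Chapman--Kolmogorov equations with the inequality $K_a\ge T$ (property~(i)) gives
\begin{align*}
\sum_{k=0}^\infty a(k)\,P^{n+k}(x,A)=\int_X P^n(x,\dd y)\,K_a(y,A)\ge \int_B P^n(x,\dd y)\,T(y,A)\ge \epsilon\,P^n(x,B)>0,
\end{align*}
so some $P^{n+k}(x,A)$ is positive, and hence $L(x,A)>0$. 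Since $x\in X$ was arbitrary, $\phi$ is an irreducibility measure.

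It remains to upgrade $\phi$ to a maximal irreducibility measure. This is standard: one sets $\psi$ to be (a normalization of) $\sum_{n\ge 0}2^{-n-1}\phi P^n$ and checks, using the irreducibility of $\phi$ just established, that any other irreducibility measure is absolutely continuous with respect to $\psi$. Alternatively, this lifting can be cited directly from \cite[Chapter~4]{MT}.

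The delicate point of the argument is the interface between the abstract continuous component and topological reachability: one only knows $T(x_0,A)>0$ pointwise, but lower semicontinuity turns this into $T(\cdot,A)>\epsilon$ on an open neighborhood of $x_0$, which can then be entered from every starting point by reachability. Without property~(iii) no such neighborhood upgrade would be available and the argument would break down.
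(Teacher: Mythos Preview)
The paper does not give its own proof of this proposition; it is simply quoted from \cite[Proposition~6.2.1]{MT} as a black box. Your argument is correct and is essentially the standard proof found in that reference: take $T(x_0,\cdot)$ as the candidate irreducibility measure, use lower semicontinuity to thicken the pointwise positivity $T(x_0,A)>0$ to a neighborhood, reach that neighborhood via reachability, and then lift to a maximal irreducibility measure.
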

The second one is a strong decomposition statement, allowing the splitting of the state space $X$ into a Harris recurrent and a transient part. 
\begin{theorem}[{\cite[Theorem~9.3.6]{MT}}]\label{thm:T_decomposition}
For a $\psi$-irreducible $T$-chain, the state space $X$ admits a decomposition 
\begin{align*}
X=H\sqcup N
\end{align*}
into a \emph{Harris set} $H$ \textup{(}meaning that $P(x,H)=1$ for each $x\in H$ and the restriction of the chain to $H$ is Harris recurrent\textup{)} and a set $N$ consisting of topologically transient states. 
\end{theorem}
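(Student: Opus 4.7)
My plan is to take $H$ to be the \emph{maximal Harris set}
\[
H := \set*{x\in X\for Q(x,A)=1\text{ for every }A\in\B\text{ with }\psi(A)>0},
\]
and to use the $T$-chain hypothesis to show that $N=X\setminus H$ consists of topologically transient points.

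First, I would verify that $H$ is absorbing. Fix $x\in H$ and $A\in\B$ with $\psi(A)>0$. The event $\set{\eta_A=\infty}$ has $\Pr_x$-probability one and, since it is unaffected by removing a single visit at time $1$, the Markov property at time $1$ yields $Q(\Phi_1,A)=1$ almost surely under $\Pr_x$. A standard reduction to a countable family of generating $\psi$-positive sets (available because $\B$ is countably generated and $\psi$ is $\sigma$-finite) then allows one to intersect the resulting full-measure events, giving $P(x,H)=1$. Harris recurrence of the restricted chain on $H$ is immediate from the definition.

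The main task is to show every $x_0\in N$ is topologically transient, and this is where the $T$-chain property becomes essential. I would split according to the classical recurrence/transience dichotomy for $\psi$-irreducible chains. If $\Phi$ is transient, then $X$ is a countable union of uniformly transient sets; the lower semicontinuity of the continuous component $T$ then upgrades any such covering set containing $x_0$ to a uniformly transient neighborhood, yielding topological transience. If $\Phi$ is recurrent, a standard argument shows $\psi(X\setminus H)=0$, so $N$ is $\psi$-null. For $x_0\in N$, pick $A_0\in\B$ with $\psi(A_0)>0$ and $Q(x_0,A_0)<1$; a $\psi$-irreducible 0-1 law forces $Q(x_0,A_0)=0$. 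Using $T(x_0,X)>0$ and lower semicontinuity, I would find an open neighborhood $B$ of $x_0$ and a $\psi$-positive set $A\subseteq A_0$ such that $T(\cdot,A)\ge\epsilon$ on $B$ for some $\epsilon>0$. A strong-Markov/conditional Borel--Cantelli argument at the successive return times to $B$ then shows $\set{\Phi_n\in B\text{ i.o.}}\subseteq\set{\Phi_n\in A\text{ i.o.}}$ modulo a $\Pr_{x_0}$-null event, which contradicts $Q(x_0,A_0)=0$ unless $U(x_0,B)<\infty$.

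The main obstacle is this final transfer from a pointwise failure of the Harris property at $x_0$ to a neighborhood-level transience statement. Without the $T$-chain condition there is no mechanism to upgrade a pointwise failure into a topological one, and the decomposition can fail in general. The continuous component $T$, through its lower semicontinuity, is precisely what supplies uniform lower bounds on hitting probabilities from an entire neighborhood of $x_0$ into prescribed $\psi$-positive sets; this uniformity is the key ingredient fueling the Borel--Cantelli step. Assembling these pieces yields the desired decomposition $X=H\sqcup N$.
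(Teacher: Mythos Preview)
The paper does not prove this statement: it is quoted verbatim as \cite[Theorem~9.3.6]{MT} and used as a black box (notably in the proof of Proposition~\ref{prop:harris_rec}). There is therefore no proof in the paper to compare your proposal against. For what it is worth, your outline follows the standard route taken in Meyn--Tweedie: identify the maximal Harris set $H$, use the Markov property to see it is absorbing, and then exploit the lower semicontinuity of the continuous component $T$ to promote the pointwise failure of Harris recurrence at $x_0\in N$ to a neighborhood-level transience statement via a Borel--Cantelli type argument.
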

The following result is the motivation for introducing Markov chain methods in the study of spread out random walks. 
For random walks on groups it is due to Tuominen--Tweedie \cite[Theorem~5.1(i)]{TT}; the case of a homogeneous space $X$ is not much more complicated. 
\begin{proposition}\label{prop:T_equivalence}
Let $G$ be a $\sigma$-compact locally compact metrizable group, $\Gamma\subset G$ a discrete subgroup, and $X$ the homogeneous space $G/\Gamma$. 
Then the random walk on $X$ given by a probability measure $\mu$ on $G$ is a $T$-chain if and only if $\mu$ is spread out. 
In this case, the sampling distribution $a$ and the continuous component $T$ may be chosen such that 
\begin{itemize}
\item $a=\delta_{n_0}$ for some $n_0\in\N$, 
\item $T(\cdot,X)$ is constant, and 
\item $Tf$ is continuous for every bounded measurable function $f$ on $X$. 
\end{itemize}
\end{proposition}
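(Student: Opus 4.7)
The plan is to prove the two directions separately. The \enquote{if} direction, where spread-out-ness of $\mu$ yields the $T$-chain structure together with the three bullet-point properties, is done by an explicit construction. The \enquote{only if} direction reduces to the corresponding result for random walks on $G$ by Tuominen--Tweedie~\cite[Theorem~5.1(i)]{TT} via a localization argument on $X$.

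For the \enquote{if} direction, suppose $\mu$ is spread out, so there exist $n_0\in\N$ and a nonnegative $f\in L^1(m_G)$ with $\int_Gf\,dm_G>0$ and $\mu^{*n_0}\ge f\cdot m_G$. By truncation I may assume $f$ is bounded and compactly supported, hence $f\in L^2(m_G)$. The convolution $h:= f*f$ computed with respect to $m_G$ is then nonnegative, continuous, compactly supported, with $\int_Gh\,dm_G=\br*{\int_Gf\,dm_G}^2>0$, and $\mu^{*2n_0}\ge h\cdot m_G$. Relabelling $n_0:= 2n_0$, I take $a:= \delta_{n_0}$ and define
\begin{align*}
T(x,A):= \int_Gh(g)\,\mathds{1}_A(gx)\,dm_G(g),\qquad x\in X,\ A\in\B.
\end{align*}
Then $T\le P^{n_0}$, and $T(x,X)=\int_Gh\,dm_G$ is a positive constant, which already yields the first two bullet points.

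For the third bullet point, I need $T\varphi$ continuous for every bounded Borel $\varphi$ on $X$. Lifting $\varphi$ to $\tilde\varphi:= \varphi\circ\pi$ on $G$ and any lift $\tilde x\in G$ of $x$, a right-translation substitution $g':= g\tilde x$ with modular factor gives
\begin{align*}
T\varphi(x)=\int_G\tilde\varphi(g\tilde x)\,h(g)\,dm_G(g)=\Delta(\tilde x)\int_G\tilde\varphi(g')\,h(g'\tilde x^{-1})\,dm_G(g'),
\end{align*}
which is independent of the lift by $\Gamma$-right-invariance of $\tilde\varphi$. Near any $x_0\in X$, discreteness of $\Gamma$ lets me choose a continuous local section $\tilde x=s(x)$. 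Continuity of right translation in $L^1(G)$ applied to the compactly supported $h$ makes $\tilde x\mapsto h(\,\cdot\,\tilde x^{-1})$ continuous $G\to L^1(G)$; combined with continuity of $\Delta$ and boundedness of $\tilde\varphi$, this gives continuity of $T\varphi$ at every point.

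For the \enquote{only if} direction, the case $\Gamma=\set{e}$, $X=G$, is exactly \cite[Theorem~5.1(i)]{TT}. In the general case, I argue by contradiction: if every $\mu^{*n}$ were singular with respect to $m_G$, concentrated on Haar-null Borel sets $N_n$, then $\nu:= \sum_na(n)\mu^{*n}$ would be supported on the Haar-null set $\bigcup_nN_n$. Localizing to a chart $s\colon U_0\to V_0\subset G$ and using right-$\Gamma$-invariance of $\pi^{-1}$, one reduces the derivation of a contradiction—from lower semicontinuity of $T(\cdot,U)$ together with $T(x,X)>0$—to the group-theoretic argument in \cite{TT}. The main obstacle is the smoothing step in the first direction, necessary to upgrade an $L^1$-density to a continuous compactly supported one and obtain continuity of $T\varphi$ (not just lower semicontinuity of $T(\cdot,A)$), a stronger property that will be needed elsewhere in the paper. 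The enabling ingredient is the classical fact that the convolution of two $L^2$-functions on a locally compact group is continuous, at the mild cost of doubling the sampling time $n_0$.
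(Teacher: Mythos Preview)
Your \enquote{if} direction is correct and is essentially the paper's argument, but the convolution-smoothing step $h=f*f$ (doubling $n_0$) is unnecessary. Continuity of $T\varphi$ for bounded measurable $\varphi$ already follows from $f\in L^1(m_G)$ via the very argument you give afterwards---continuity of right translation on $L^1(G)$ combined with continuity of $\Delta$. The paper works directly with an integrable density $p$ and obtains all three bullet points without passing to a continuous one, so your assertion that smoothing is \enquote{necessary} is mistaken (though the extra step does no harm).

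The \enquote{only if} direction is where there is a genuine gap. Writing that \enquote{localizing to a chart \dots\ one reduces \dots\ to the group-theoretic argument in \cite{TT}} is not a proof, and the chart is at best superfluous (note that $P_X^n(\pi(g),\pi(B))\ge P_G^n(g,B)$, so one cannot simply lift the continuous component from $X$ to $G$). What is missing is the actual derivation of the contradiction. After enlarging $N\coloneqq\bigcup_nN_n\cup\{e\}$ to be right-$\Gamma$-invariant (still Haar-null, as $\Gamma$ is countable) and setting $E\coloneqq\pr(N)$, one gets $K_a(e\Gamma,E^c)=0$, hence $T(e\Gamma,E)=T(e\Gamma,X)>0$, and lower semicontinuity yields $K_a(x,E)\ge\delta$ on an open neighborhood $B$ of $e\Gamma$. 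The decisive step---which you do not indicate---is the Fubini identity
\begin{align*}
m_G(N)=\int_G m_G(g^{-1}N)\dd\mu^{*n}(g)=\int_G P^n(h\Gamma,E)\dd m_G(h),
\end{align*}
whence $m_G(N)=\int_G K_a(h\Gamma,E)\dd m_G(h)\ge \delta\, m_G(\pr^{-1}(B))>0$, contradicting $m_G(N)=0$. This computation is global; no chart enters. You should write it out rather than defer to \cite{TT}.
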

For convenience we include a proof, which adapts that of \cite[Proposition~6.3.2]{MT} to the setting at hand. 
\begin{proof}
Denote by $\pr\colon G\to X,\,g\mapsto g\Gamma,$ the canonical projection. 
Recalling Example~\ref{ex:RW}, we see that the powers of the transition kernel $P$ of the random walk on $X$ are given by 
\begin{align}\label{concrete_kernel}
P^n(x,A)=\int_Gh_*\delta_{g\Gamma}(A)\dd\mu^{*n}(h)=\mu^{*n}(\pr^{-1}(A)g^{-1})
\end{align}
for $n\in\N$, $x=g\Gamma\in X$, and $A\subset X$. 
Let $m_G$ denote a left Haar measure on $G$. 

Assume first that the random walk is a $T$-chain. 
If every convolution power $\mu^{*n}$ for $n\in\N$ is singular with respect to $m_G$, we find a set $E_G\subset G$ with $\mu^{*n}(E_G)=1$ for all $n\in\N$ and $m_G(E_G)=0$. 
Enlarging $E_G$ if necessary, we may assume that the identity $e\in G$ belongs to $E_G$ 
and that $E_G$ is right-$\Gamma$-invariant. 
Write $E=\pr(E_G)$ and let $a$ be the sampling distribution associated to the continuous component $T$ of the random walk. 
Then 
\begin{align*}
T(e\Gamma,E^c)\le K_a(e\Gamma,E^c)=\sum_{n=0}^\infty a(n)\underbrace{P^n(e\Gamma,E^c)}_{=\mu^{*n}(E_G^c)=0}=0,
\end{align*}
where we used \eqref{concrete_kernel}, that $\pr^{-1}(E^c)=E_G^c$ by the assumed right-$\Gamma$-invariance and $e\in E_G$ for $n=0$. 
Properties (ii) and (iii) in the definition of a $T$-chain thus produce $\delta>0$ and a neighborhood $B$ of $e\Gamma\in X$ with $T(x,E)\ge \delta$, and hence also 
\begin{align*}
K_a(x,E)\ge \delta
\end{align*}
for all $x\in B$. 
But by translation invariance of $m_G$ and Fubini's theorem, we find 
\begin{align*}
m_G(E_G)&=\int_Gm_G(g^{-1}E_G)\dd\mu^{*n}(g)\\
&=\int_G\mu^{*n}(E_Gh^{-1})\dd m_G(h)\\
&=\int_GP^n(h\Gamma,E)\dd m_G(h),
\end{align*}
which, after summing with the weights $a(n)$, yields the contradiction 
\begin{align*}
m_G(E_G)&=\int_GK_a(h\Gamma,E)\dd m_G(h)\\
&\ge \int_{\pr^{-1}(B)}K_a(h\Gamma,E)\dd m_G(h)\\
&\ge \delta m_G(\pr^{-1}(B))>0.
\end{align*}

For the converse, suppose that $\mu^{*n_0}$ is not singular with respect to $m_G$ for some $n_0\in\N$. 
Then there exists a non-negative $m_G$-integrable function $p\colon G\to\R$ with $\int p\dd m_G>0$ and $\dd\mu^{*n_0}\ge p\dd m_G$. 
Denoting by $\Delta$ the modular character of $G$, we obtain for $x=g\Gamma\in X$ and $A\subset X$ 
\begin{align*}
P^{n_0}(x,A)\ge\int_{\pr^{-1}(A)g^{-1}}p\dd m_G = \Delta(g)^{-1}\int_{\pr^{-1}(A)}p(g'g^{-1})\dd m_G(g')\eqqcolon T(x,A).
\end{align*}
The sampling distribution $a=\delta_{n_0}$ together with this $T$ are then seen to possess all claimed properties. 
\end{proof}
\section{Spread Out Random Walks}\label{sec:spread_out}
This section is the central part of the paper, aiming to give a complete picture of the qualitative behavior of spread out random walks on homogeneous spaces. 

In what follows, we are not going to assume that $\Gamma$ is a lattice or that $G$ is unimodular, so that there will in general be no $G$-invariant measure on the quotient $X=G/\Gamma$. 
However, for every continuous character $\chi\colon G\to \R_{>0}$ extending the restriction $\Delta|_\Gamma$ of the modular character $\Delta$ of $G$ to $\Gamma$, there exists a non-trivial Radon measure $m_{X,\chi}$ on $X$ that is \emph{$\chi$-quasi-invariant} in the sense that 
\begin{align*}
g_*m_{X,\chi}=\chi(g)m_{X,\chi}
\end{align*}
for all $g\in G$. 
Such a measure is unique up to scalars. 
Two important cases of this construction are $\chi=\Delta$, the choice of which is always possible, and $\chi=\mathds{1}$, which is a possible choice whenever $\Delta(\gamma)=1$ for all $\gamma\in\Gamma$. 
In the latter case, $m_X=m_{X,\mathds{1}}$ is a Haar measure on $X$. 
All $m_{X,\chi}$ belong to the same measure class, which we refer to as the \emph{Haar measure class} on $X$. 
This terminology is justified by the fact that $m_{X,\Delta}$ can be identified with the restriction of a right Haar measure on $G$ to a fundamental domain for $\Gamma$. 
We refer to \cite[Ch.VII\S2]{bourbaki} for details. 

Slightly abusing notation, we are going to denote the Haar measure class on $X$ by $[m_X]$, and for a measure $\nu$ on $X$ write $\nu\ll[m_X]$, $\nu\sim[m_X]$, $[m_X]\ll \nu$ to express that $\nu$ is absolutely continuous with respect to $[m_X]$, contained in $[m_X]$, or that $[m_X]$ is absolutely continuous with respect to $\nu$, respectively. 

Let us summarize at this point the standing assumptions and notations that will be in effect for the remainder of the article when nothing else is specified. 
\begin{center}
\fbox{
\begin{minipage}{.8\textwidth}
\textsc{Standing Assumptions:} 
$\mu$ is a probability measure on a locally compact $\sigma$-compact metrizable group $G$; $\calS$ and $\G$ are the closed subsemigroup and subgroup of $G$ generated by $\supp(\mu)$, respectively; $\Gamma\subset G$ is a discrete subgroup; $X$ is the homogeneous space $G/\Gamma$; $[m_X]$ is the Haar measure class on $X$ and $m_X$ a Haar measure (when one exists); and $P$ is the transition kernel of the random walk on $X$ induced by $\mu$. 
\end{minipage}
}
\end{center}

\subsection{Transitivity \& \texorpdfstring{$\psi$}{psi}-Irreducibility}\label{subsec:trans}
Let $x\in X$ be the starting point for our random walk. 
Then, in some sense, everything outside the closed subgroup $\G$ of $G$ generated by $\supp(\mu)$ and outside the orbit $\G x\subset X$ is irrelevant for the study of the random walk. 
The following simple lemma shows how such redundancy can be removed. 
\begin{lemma}\label{lem:adapted}
Let $\mu$ be spread out. 
Then $\G$ is an open subgroup of $G$. 
For every $x=g\Gamma\in X$ the orbit $\G x$ is a clopen subset of $X$ satisfying $\G/(\G\cap g\Gamma g^{-1})\cong \G x$. 
If $X$ has finite volume, then so does $\G/(\G\cap g\Gamma g^{-1})$. 
\end{lemma}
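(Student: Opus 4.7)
The plan is to establish the four claims in order, with the key input being the standard fact that a Borel subgroup of positive Haar measure is open. For the first claim, spread-outness provides some $n_0$ such that $\mu^{*n_0}$ has a non-trivial absolutely continuous component $p\,\dd m_G$ with $p\geq 0$ and $\int_G p\,\dd m_G>0$. Since $\supp(\mu^{*n_0})\subset\S\subset\G$ and $\G$ is closed (hence Borel), $p$ vanishes $m_G$-almost everywhere outside $\G$; consequently $m_G(\G)>0$. Applying Steinhaus' theorem to a compact subset of $\G$ of positive finite measure shows that $\G\G^{-1}=\G$ contains a neighborhood of the identity, and since $\G$ is a subgroup this forces $\G$ to be open.

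For the second claim, openness of $\G$ makes $\G g$ open in $G$. The quotient map $\pr\colon G\to X$ is open, because for any open $U\subset G$ the preimage $\pr^{-1}(\pr(U))=U\Gamma=\bigcup_{\gamma\in\Gamma}U\gamma$ is open. Hence $\G x=\pr(\G g)$ is open in $X$. Since distinct $\G$-orbits in $X$ are disjoint, the complement of $\G x$ is the union of the remaining $\G$-orbits, each open by the same argument, and therefore $\G x$ is also closed.

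For the third claim, the orbit map $\phi\colon\G\to\G x$, $h\mapsto hx$, has stabilizer
\begin{align*}
\Stab_\G(x)=\set{h\in\G\for hg\Gamma=g\Gamma}=\G\cap g\Gamma g^{-1},
\end{align*}
so it descends to a continuous $\G$-equivariant bijection $\bar\phi\colon\G/(\G\cap g\Gamma g^{-1})\to\G x$. To upgrade $\bar\phi$ to a homeomorphism I would factor $\phi$ as $\G\xrightarrow{\cdot g}\G g\xrightarrow{\pr|_{\G g}}\G x$. The first map is a homeomorphism, and $\pr|_{\G g}$ is the restriction of the open map $\pr$ to the open subset $\G g\subset G$, hence continuous, surjective, and open. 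A continuous, surjective, open map is a topological quotient map, so the induced bijection from $\G/(\G\cap g\Gamma g^{-1})$ to $\G x$ is indeed a homeomorphism.

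Finally, if $X$ has finite volume then $m_X|_{\G x}$ is a non-trivial finite $\G$-invariant Radon measure on the clopen orbit $\G x$. Transporting it through the homeomorphism of the previous step yields a non-trivial finite Haar measure on $\G/(\G\cap g\Gamma g^{-1})$, which means the latter also has finite volume. The only slightly delicate point in this plan is Step 3, where one needs to pass from a continuous equivariant bijection to a genuine homeomorphism; this is exactly what the openness of $\pr$ combined with the openness of $\G g$ in $G$ buys us. Everything else is essentially routine once $\G$ has been shown to be open.
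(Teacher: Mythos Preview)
Your proof is correct and follows essentially the same structure as the paper's: show $\G$ is open, deduce that $\G$-orbits are clopen, identify the stabilizer, and restrict the Haar measure. The only notable difference is in the first step: the paper obtains non-empty interior of $\G$ by passing to a higher convolution power with a \emph{continuous} density (using that the convolution of bounded integrable functions is continuous), whereas you use Steinhaus' theorem on a positive-measure subgroup; both are standard one-line arguments, and your treatment of the homeomorphism in Step~3 is in fact more explicit than the paper's.
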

\begin{proof}
From the formula 
\begin{align}\label{convolution_support}
\overline{\supp(\mu^{*m})\supp(\mu^{*n})}=\supp(\mu^{*(m+n)})
\end{align}
for $m,n\in\N$ we see that $\supp(\mu^{*n})\subset \G$ for every $n\in\N$. 
Since $\mu$ is spread out and the convolution of bounded integrable functions on $G$ is continuous, some convolution power $\mu^{*n_0}$ has a component with continuous density with respect to Haar measure on $G$. 
Thus $\G\supset\supp(\mu^{*n_0})$ has non-empty interior, and consequently $\G$ is open. 
Since the action map $G\ni g\mapsto gx\in X$ is a local homeomorphism, this implies that also $\G x$ is open. 
But then $X$ is a disjoint union of such open $\G$-orbits, so that all of them must also be closed. 
Writing $x=g\Gamma$, the isomorphism $\G/(\G\cap g\Gamma g^{-1})\cong \G x$ of $\G$-spaces follows, since $\G\cap g\Gamma g^{-1}=\operatorname{Stab}_\G(x)$. 
When $X$ has finite volume, this quotient supports a finite invariant measure inherited from the restriction of Haar measure on $X$ to $\G x$, so that $\G\cap g\Gamma g^{-1}$ is a lattice in $\G$. 
\end{proof}
In other words, at the price of replacing $X$ by $\G x$, we are free to assume that $\mu$ is adapted. 
In view of this, we will formulate most of the following results only for adapted measures. 

Preparing for the proof of $\psi$-irreducibility of spread out random walks, our next objective is to find a more efficient description of an orbit $\G x$. 
We will use the notation $\Gamma_g=g\Gamma g^{-1}$ for $g\in G$. 
\begin{lemma}\label{lem:semigroup_transitive}
The set $S=\bigcup_{n=1}^\infty \supp(\mu^{*n})$ is a subsemigroup of $G$ with $\overline{S}=\calS$. 
If $\mu$ is spread out and adapted and $\calS \Gamma_g=\set{s\gamma\for s\in \calS,\,\gamma\in\Gamma_g}$ equals $G$ for all $g\in G$, then $S$ acts transitively on $X$. 
\end{lemma}
\begin{proof}
That $S$ is a semigroup with $\supp(\mu)\subset S\subset \calS$ follows from \eqref{convolution_support}. 
Since $\calS$ is by definition the smallest closed subsemigroup of $G$ containing $\supp(\mu)$, we must have $\overline{S}=\calS$. 

Let us now show transitivity of the $S$-action on $X$ under the stated assumptions. 
To this end, note first that $S\Gamma_g$ is dense in $G$ for every $g\in G$, since 
\begin{align*}
\overline{S\Gamma_g}=\overline{\bigcup_{\gamma\in\Gamma_g}S\gamma}\supset\bigcup_{\gamma\in\Gamma_g}\overline{S\gamma}=\bigcup_{\gamma\in\Gamma_g}\calS\gamma=\calS\Gamma_g=G.
\end{align*}
Now let $x,y\in X$ be arbitrary. 
We need to find an element of $S$ sending $x$ to $y$. 
Choose $g\in G$ with $gx=y$ and write $x=h\Gamma$ for some $h\in G$. 
Using that $S$ has non-empty interior (by the same argument as in Lemma~\ref{lem:adapted}), we can find a non-empty open subset $U$ of $G$ contained in $S$. 
By density of $S\Gamma_h$ in $G$, it follows that $U^{-1}g$ intersects $S\Gamma_h$ non-trivially, say $u^{-1}g=sh\gamma h^{-1}$ for some $u\in U\subset S$, $s\in S$ and $\gamma\in\Gamma$. 
Recalling that $x=h\Gamma$, we conclude that
\begin{align*}
usx=gh\gamma^{-1} h^{-1}x=gh\Gamma=gx=y,
\end{align*}
so that the element $us\in S$ has the required property. 
\end{proof}

The conclusion of the previous lemma will be important for many of the following results. 
Let us therefore give a name to its set of assumptions. 
\begin{definition}
We say that a probability measure $\mu$ on $G$ is \emph{$\Gamma$-adapted} if $\mu$ is adapted and $\calS \Gamma_g=G$ for all $g\in G$, where $\Gamma_g=g\Gamma g^{-1}$. 
\end{definition}

For spread out random walks on finite volume spaces, the second requirement in the above definition is redundant. 
\begin{proposition}\label{prop:fin_vol_adapted}
Let $\mu$ be spread out and adapted and suppose that $X$ has finite volume. 
Then $\mu$ is $\Gamma$-adapted. 
\end{proposition}
\begin{proof}
We claim that for every $x\in X$, the orbit $A=\calS x$ equals $X$. 
This will imply that for every $g,g'\in G$ there exists $s\in\calS$ with $sg\Gamma=g'g\Gamma$, which is the desired conclusion. 

To prove the claim, observe that $A$ satisfies $sA\subset A\subset s^{-1}A$ for every $s\in \calS$. 
By invariance of $m_X$ we also know that the $m_X$-measures of these three sets coincide, so it follows that the characteristic function $\mathds{1}_A$ is $m_X$-a.s.\ invariant under each element of $\calS\cup \calS^{-1}$ (individually). 
We conclude that $\mathds{1}_A$ is $m_X$-a.s.\ invariant under each element of a dense subset of $G$, hence under all of $G$ by continuity of the regular representation on $L^1(X)$. 
But as $\calS$ has non-empty interior, we know that $A$ has positive measure, so that $G$-invariance forces $m_X(A)=1$. 
But then, if there was some $y\in X\setminus A$, we would have a set $\calS^{-1}y$ disjoint from $A$ which also has positive measure (since also $\calS^{-1}$ has non-empty interior), which is a contradiction. 
\end{proof}

We can now relate the property of a probability measure being spread out to $\psi$-irreducibility of the induced random walk. 
Recall the convention that when speaking about $\psi$-irreducibility, $\psi$ always denotes a maximal irreducibility measure. 
\begin{proposition}\label{prop:spread_out_irr}
Let $\mu$ be a probability measure on $G$. 
If the random walk on $X$ given by $\mu$ is $\psi$-irreducible, then $\mu$ is spread out and $\psi\ll [m_X]$. 
Conversely, if $\mu$ is spread out and $\Gamma$-adapted, then the random walk on $X$ is $\psi$-irreducible with $[m_X]\sim\psi$. 
\end{proposition}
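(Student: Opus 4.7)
The plan is to treat the two implications separately.

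\emph{Forward direction.} Assume $\psi$-irreducibility and subinvariance of $[m_X]$. The scheme is to first prove $\psi \ll [m_X]$, then derive spread-outness from this. Pick a subinvariant $\pi \in [m_X]$ and any $[m_X]$-null set $N$. Since $\pi \sim [m_X]$ we have $\pi(N) = 0$, so iterating $\pi P \le \pi$ yields $\pi P^n(N) \le \pi(N) = 0$ for every $n \in \N$. Hence $P^n(x,N) = 0$ for $\pi$-a.e.\ $x$ and every $n$, and by countable additivity $\pi$-a.e.\ $x$ satisfies $L(x,N) \le \sum_n P^n(x,N) = 0$. Since $\pi \neq 0$ such an $x$ exists in $X$, so $\psi$-irreducibility forces $\psi(N) = 0$. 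With $\psi \ll [m_X]$ in hand, I would prove spread-outness by contradiction: if $\mu$ were not spread out, then (repeating the construction at the start of the proof of Proposition~\ref{prop:T_equivalence}) there would exist a right-$\Gamma$-invariant $m_G$-null set $E_G \ni e$ with $\mu^{*n}(E_G) = 1$ for every $n \ge 0$; its image $E = \pr(E_G)$ would be $[m_X]$-null, yet $P^n(e\Gamma, E) \ge \mu^{*n}(E_G) = 1$ would give $L(e\Gamma, E^c) = 0$. But $\psi \ll [m_X]$ gives $\psi(E) = 0$, so $\psi(E^c) > 0$, and $\psi$-irreducibility would demand $L(e\Gamma, E^c) > 0$—contradiction.

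\emph{Reverse direction.} Now suppose $\mu$ is spread out and $(G,\Gamma)$-adapted. By Proposition~\ref{prop:T_equivalence} the chain is a $T$-chain with continuous component of the form $T(x,A) = \Delta(g)^{-1}\int_{\pr^{-1}(A)} p(g' g^{-1})\dd m_G(g')$, where $x = g\Gamma$ and $p$ is the density of an absolutely continuous component of $\mu^{*n_0}$, which may be chosen continuous. Proposition~\ref{prop:semigroup_transitive} gives transitivity of $S = \bigcup_{n\ge 1}\supp(\mu^{*n})$ on $X$. From this I would deduce that every $y \in X$ is reachable: for any $x$ and open neighborhood $B \ni y$, pick $s \in S \cap \supp(\mu^{*n})$ with $sx = y$; by continuity of the action some neighborhood $W$ of $s$ satisfies $Wx \subset B$, so $P^n(x,B) \ge \mu^{*n}(W) > 0$. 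Proposition~\ref{prop:reachable-irreducible} then yields $\psi$-irreducibility. For $[m_X] \ll \psi$, I would exploit the fact (readable off the proof of Proposition~\ref{prop:reachable-irreducible}) that for any reachable $x$, $T(x,\cdot)$ is itself an irreducibility measure, so $T(x,\cdot) \ll \psi$ by maximality. The explicit form of $T$, together with continuity of $p$, makes $T(x,\cdot)$ equivalent to $[m_X]|_{Vx}$, where $V \coloneqq \{p > 0\}$ is nonempty and open. The family $\{Vx : x \in X\}$ is an open cover of $X$ (any $y$ lies in $V(v^{-1}y)$ for arbitrary $v \in V$), so by $\sigma$-compactness of $X$ one extracts a countable subcover $\{Vx_i\}$ along which the local relations $[m_X]|_{Vx_i} \ll \psi$ assemble into $[m_X] \ll \psi$.

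The most delicate step, I expect, is this final one: transporting the \emph{local} fiberwise absolute continuity $T(x,\cdot) \ll \psi$, which only detects what happens on the open set $Vx$, to the \emph{global} statement $[m_X] \ll \psi$. Making this go through requires three mildly nontrivial ingredients—continuity of the density $p$ (so $V$ is open), transitivity of left-translation on $G$ (so $\{Vx\}$ covers $X$), and Lindelöfness of $X$ (so a countable subcover exists)—each of which is readily available in the present setup.
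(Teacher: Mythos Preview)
Your proof is correct and tracks the paper's argument closely in the forward direction (you simply unpack the content of \cite[Proposition~10.1.2]{MT} rather than cite it) and in the proof of $\psi$-irreducibility in the reverse direction. The one place where you genuinely diverge is the argument for $[m_X]\ll\psi$. The paper proceeds more directly: given an $[m_X]$-positive set $A$, it lifts to $A_G=\pr^{-1}(A)$ and invokes the standard measure-theoretic fact that two sets of positive Haar measure have translates meeting in positive measure to produce a single $g\in G$ with $T(g\Gamma,A)>0$; since every point is reachable, $T(g\Gamma,\cdot)$ is an irreducibility measure, forcing $L(x,A)>0$ for all $x$. Your route is more geometric: you upgrade $p$ to a continuous density so that $V=\{p>0\}$ is open, identify $T(x,\cdot)$ with $[m_X]|_{Vx}$, and patch countably many local relations $[m_X]|_{Vx_i}\ll\psi$ via a Lindel\"of cover. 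Both work; the paper's version is marginally leaner (no continuity of $p$, no covering argument), while yours makes the structure of $T$ more visible and avoids the translate-intersection lemma.
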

\begin{proof}
Suppose first that the random walk on $X$ given by $\mu$ is $\psi$-irreducible and let $m_{X,\chi}$ be a quasi-invariant measure on $X$ for some character $\chi$ of $G$. 
Then for every measurable subset $A\subset X$ we find, using Fubini's theorem, 
\begin{align}\label{quasi_stat}
\int_XP(x,A)\dd m_{X,\chi}(x)=\int_Gm_{X,\chi}(h^{-1}A)\dd\mu(h)=\int_G\chi\dd\mu \cdot m_{X,\chi}(A).
\end{align}
In other words, we have $m_{X,\chi}P=c(\chi,\mu)m_{X,\chi}$ for the constant $c(\chi,\mu)=\int_G\chi\dd\mu$. 
Consider the sampled transition kernel $K_a=\sum_{n\ge 0}a(n)P^n$ with $a(n)=2^{-(n+1)}$ for $n\in\N_0$. 
By definition of an irreducibility measure, for every $\psi$-positive set $A\subset X$ it satisfies $K_a(x,A)>0$ for all $x\in X$. 
If $A$ were an $[m_X]$--null set, it would follow that
\begin{align*}
0<m_{X,\chi}K_a(A)=\sum_{n=0}^\infty a(n)m_{X,\chi}P^n(A)=\sum_{n=0}^\infty a(n)c(\chi,\mu)^nm_{X,\chi}(A)=0,
\end{align*}
which is a contradiction. 
We have thus shown that $\psi\ll [m_X]$. 
If $\mu$ is not spread out, then as in the proof of Proposition~\ref{prop:T_equivalence} there exists a right-$\Gamma$-invariant measurable set $e\in E_G\subset G$ with $\mu^{*n}(E_G)=1$ for all $n\in\N$ and $m_G(E_G)=0$, where $m_G$ denotes a left Haar measure on $G$. 
The set $E=\pr(E_G)$, where $\pr\colon G\to X,\,g\mapsto g\Gamma,$ denotes the projection, is then an $[m_X]$--null set. 
Using $\psi\ll [m_X]$ it follows that $\psi(E^c)>0$; yet we have $P^n(e\Gamma,E^c)=\mu^{*n}(E_G^c)=0$ for all $n\in\N_0$. 
This contradicts $\psi$-irreducibility, hence $\mu$ must be spread out. 

For the converse, recall from Proposition~\ref{prop:T_equivalence} that the random walk on $X$ induced by a spread out measure $\mu$ is a $T$-chain. 
By Proposition~\ref{prop:reachable-irreducible}, $\psi$-irreducibility can be established by proving existence of a reachable state. 
But from Lemma~\ref{lem:semigroup_transitive} it in fact follows that every $x_0\in X$ is reachable: 
Given any other point $x\in X$, it can be written as $x_0=sx$ for some $s\in \supp(\mu^{*n})$, and we conclude for any neighborhood $B$ of the identity in $G$ that 
\begin{align*}
L(x,Bx_0)\ge P^n(x,Bx_0)=P^n(x,Bsx)\ge \mu^{*n}(Bs)>0.
\end{align*}
Hence, the random walk is $\psi$-irreducible. 
The first part of the proposition thus yields $\psi\ll[m_X]$. 
To also obtain $[m_X]\ll\psi$, it suffices to show that members of the Haar measure class are irreducibility measures. 
Let therefore $A\subset X$ be an $[m_X]$-positive set and define $A_G=\pr^{-1}(A)$. 
Then also $m_G(A_G)>0$. 
By Proposition~\ref{prop:T_equivalence} and its proof, for some $n_0\in\N$ the kernel $P^{n_0}$ has a continuous component $T$ given by an absolutely continuous measure $p\dd m_G$ on $G$, where $p$ is an $m_G$-integrable function on $G$ with $\int_Gp\dd m_G>0$. 
In particular, we know $m_G(p^{-1}((0,\infty)))>0$, so that by a standard fact of measure theory also $m_G(A_Gg^{-1}\cap p^{-1}((0,\infty)))>0$ for some $g\in G$. 
It follows that 
\begin{align}\label{T_irred}
T(g\Gamma,A)=\int_{A_Gg^{-1}}p\dd m_G>0.
\end{align}
But as $g\Gamma$ is reachable, \cite[Proposition~6.2.1]{MT} implies that $T(g\Gamma,\cdot)$ is an irreducibility measure, so that \eqref{T_irred} entails $L(x,A)>0$ for all $x\in X$. 
This completes the proof. 
\end{proof}
\begin{corollary}\label{cor:fin_vol_irred}
Let $\mu$ be spread out and adapted and suppose that $X$ has finite volume. 
Then the random walk on $X$ given by $\mu$ is $\psi$-irreducible with $\psi\sim[m_X]$. 
\end{corollary}
\begin{proof}
Combine Propositions~\ref{prop:fin_vol_adapted} and~\ref{prop:spread_out_irr}. 
\end{proof}
One may wonder if in the first statement of the previous proposition, $\psi$ must even belong to the Haar measure class on $X$. 
In view of the second conclusion this is true when $\mu$ is additionally $\Gamma$-adapted. 
In general however, it does not hold, as the following example demonstrates. 
\begin{example}
Let $G=\R_{>0}\ltimes \R$ be the $ax+b$-group of affine transformations of $\R$, with group law given by
\begin{align*}
(a,b)(a',b')=(aa',ab'+b)
\end{align*}
for $a,a'\in\R_{>0}$ and $b,b'\in \R$, and consider the discrete subgroup $\Gamma$ of $G$ given by $\Gamma=\set{2^n\for n\in\Z}\times\set{0}$. 
We decompose $G$ into $G^+=\set{(a,b)\in G\for b\ge 0}$ and $G^-=\set{(a,b)\in G\for b<0}$ and define $X^\pm=\pr(G^\pm)$, where $\pr\colon G\to X=G/\Gamma$ denotes the projection. 
Our goal is to construct a $\psi$-irreducible random walk on $X$ which never moves from $X^+$ to $X^-$. 
Then it will follow that any irreducibility measure for this random walk must have support inside $X^+$ and thus cannot belong to the Haar measure class. 

The following construction achieves this goal. 
Let $\mu$ be a probability measure on $G$ absolutely continuous with respect to a right Haar measure $m_G$ with a density that is strictly positive on $(0,1)\times \R_{\ge 0}$ and $0$ otherwise. 
For example, one may choose $\dd\mu(a,b)=\mathds{1}_{(0,1)\times \R_{\ge 0}}(a,b)\euler^{-b}\dd a\dd b$. 
Let $m_{X,\Delta}$ be the quasi-invariant measure on $X$ coming from the modular character $\Delta$ of $G$. 
Then given a starting point $x\in X$, the law $\mathcal{L}_x(\Phi_1)=\mu*\delta_x$ after the first step of the random walk is absolutely continuous with respect to $m_{X,\Delta}$. 
By choice of $\mu$ and definition of the group operation, the corresponding density $p_x=\frac{\dd(\mu*\delta_x)}{\dd m_{X,\Delta}}$ is seen to have the following properties: 
\begin{itemize}
\item Irrespective of the location of the starting point $x$, this density $p_x$ is strictly positive almost everywhere on $X^+$. 
\item For $x\in X^+$, $p_x$ is $0$ on $X^-$. 
\end{itemize}
Indeed, both properties follow from the geometry of the left action of $G$ on $X$, which can be understood e.g.\ by identifying $X$ with the fundamental domain $F=[1,2)\times \R$ for $\Gamma$ inside $G$; see Figure~\ref{fig:example}. 
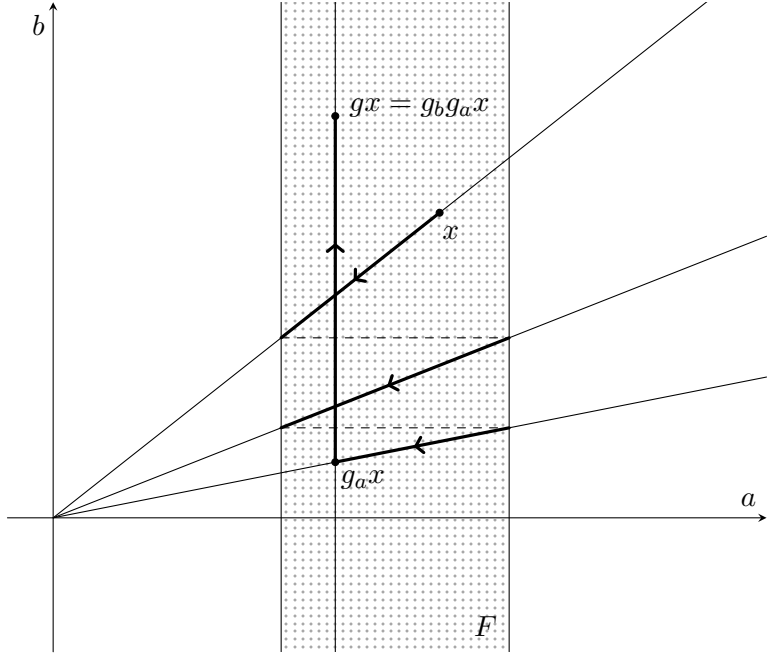
\begin{figure}[ht]
\centering
\begin{tikzpicture}[line cap=round,line join=round,>=triangle 45,x=1.0cm,y=1.0cm]
\begin{axis}[
ticks=none,
x=3cm,y=1.0cm,
axis lines=middle,
xmin=-0.20054412865667584,
xmax=3.129019977647152,
ymin=-1.7765263726431242,
ymax=6.835719848227108,
xtick={},
ytick={-1.0,0.0,...,6.0},]
\clip(-0.20054412865667584,-1.7765263726431242) rectangle (3.129019977647152,6.835719848227108);
\fill[line width=0.pt,color=wqwqwq,fill=wqwqwq,pattern=dots,pattern color=xdxdff] (1.,7.2402304031231735) -- (2.,7.2402304031231735) -- (2.,-5.551014218805338) -- (1.,-5.532286188612177) -- cycle;
\draw  (2.,-1.7765263726431242) -- (2.,6.835719848227108);
\draw  (1.,-1.7765263726431242) -- (1.,6.835719848227108);
\draw [domain=0.0:3.129019977647152] plot(\x,{(-0.--4.044714128500644*\x)/1.6949236593430783});
\draw [domain=0.0:3.129019977647152] plot(\x,{(-0.--2.3863695017793907*\x)/2.});
\draw [domain=0.0:3.129019977647152] plot(\x,{(-0.--1.1931847508896953*\x)/2.});
\draw [dash pattern=on 3pt off 3pt] (1.,2.3863695017793907)-- (2.,2.3863695017793907);
\draw [dash pattern=on 3pt off 3pt] (1.,1.1931847508896953)-- (2.,1.1931847508896953);
\draw [line width=1.2pt] (1.6949236593430783,4.044714128500644)-- (1.,2.3863695017793907);
\draw [line width=1.2pt] (1.325004214442977,3.1619496470758803) -- (1.3257569481428253,3.295390917048339);
\draw [line width=1.2pt] (1.325004214442977,3.1619496470758803) -- (1.3691667112002532,3.1356927132316956);
\draw [line width=1.2pt] (2.,2.3863695017793907)-- (1.,1.1931847508896953);
\draw [line width=1.2pt] (1.4732516928620414,1.7578614541454167) -- (1.4870741209248108,1.8848819841209363);
\draw [line width=1.2pt] (1.4732516928620414,1.7578614541454167) -- (1.5129258790751894,1.6946722685481497);
\draw [line width=1.2pt] (2.,1.1931847508896953)-- (1.2374677584502027,0.7382638295502174);
\draw [line width=1.2pt] (1.5904654975358419,0.9488595892379798) -- (1.6119036585034296,1.0662338442093868);
\draw [line width=1.2pt] (1.5904654975358419,0.9488595892379798) -- (1.6255640999467735,0.865214736230527);
\draw  (1.2374677584502027,-1.7765263726431242) -- (1.2374677584502027,6.835719848227108);
\draw [line width=1.2pt] (1.2374677584502027,0.7382638295502174)-- (1.2374677584502027,5.326422382943428);
\draw [line width=1.2pt] (1.2374677584502027,3.617782056850694) -- (1.2720706186773787,3.5323431062468225);
\draw [line width=1.2pt] (1.2374677584502027,3.617782056850694) -- (1.2028648982230268,3.5323431062468225);
\begin{scriptsize}
\draw [fill=black] (1.6949236593430783,4.044714128500644) circle (1.3pt);
\draw[color=black] (1.7449055685601103,3.777005416608514) node {$x$};
\draw [fill=black] (1.2374677584502027,0.7382638295502174) circle (1.3pt);
\draw[color=black] (1.3585250534729458,0.501845643460112) node {$g_ax$};
\draw [fill=black] (1.2374677584502027,5.326422382943428) circle (1.3pt);
\draw[color=black] (1.606972675947586,5.434521058323618) node {$gx=g_bg_ax$};
\draw[color=black] (1.896972675947586,-1.434521058323618) node {$F$};
\draw[color=black] (3.05,0.234521058323618) node {$a$};
\draw[color=black] (-.063972675947586,6.534521058323618) node {$b$};
\end{scriptsize}
\end{axis}
\end{tikzpicture}
\caption{Illustration of the left action of $G$ on $X$ in the fundamental domain $F$. The acting element $g=(a,b)\in(0,1)\times\R_{\ge 0}$ is decomposed as $g=g_bg_a$ for $g_a=(a,0)$ and $g_b=(1,b)$. The dashed lines indicate identifications using the right action of $\Gamma$.}
\label{fig:example}
\end{figure}
We deduce that $L(x,X^-)=0$ for all $x\in X^+$, and $L(x,A)>0$ for all $A\subset X$ intersecting $X^+$ in a positive measure set and all $x\in X$. 
Hence, the random walk is $\psi$-irreducible, with maximal irreducibility measure being given e.g.\ by $\psi=m_{X,\Delta}|_{X^+}$. 
\end{example} 

Let us record a situation in which we do not need adaptedness to guarantee that the random walk is $\psi$-irreducible on all of $X$. 
\begin{corollary}\label{cor:full_orbit}
Let $\mu$ be spread out. 
Suppose that $X$ is connected and that 
\begin{itemize}
\item $X$ has finite volume, or that 
\item $\G=\calS$. 
\end{itemize}
Then the random walk on $X$ given by $\mu$ is $\psi$-irreducible with $\psi\sim[m_X]$ and the semigroup $S$ from Lemma~\textup{\ref{lem:semigroup_transitive}} acts transitively on $X$. 
\end{corollary}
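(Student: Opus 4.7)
The plan is to reduce the hypothesis to that of Proposition \ref{prop:spread_out_irr} by replacing the acting group $G$ with the closed subgroup $\G$, exploiting the connectedness of $X$ to ensure that we do not lose any states in the process.

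First I would invoke Lemma \ref{lem:adapted}: since $\mu$ is spread out, $\G$ is an open subgroup of $G$ and for every $x=g\Gamma\in X$ the orbit $\G x$ is clopen. Because $X$ is connected and non-empty, this forces $\G x=X$ for every $x\in X$. Fixing any $x_0=g_0\Gamma$, the lemma also yields a $\G$-equivariant homeomorphism $\G/(\G\cap g_0\Gamma g_0^{-1})\cong X$, under which the random walk on $X$ driven by $\mu$ corresponds to the random walk on the quotient $\G/\Gamma'$, where $\Gamma'\coloneqq \G\cap g_0\Gamma g_0^{-1}$ is a discrete subgroup of $\G$.

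Next I would verify that, viewed as a measure on $\G$, $\mu$ is $(\G,\Gamma')$-adapted. Adaptedness is immediate since $\supp(\mu)$ generates $\G$ as a closed subgroup by construction. In the finite-volume case, the identification $\G/\Gamma'\cong X$ and Lemma \ref{lem:adapted} show that $\G/\Gamma'$ has finite volume, so the first bullet of $(\G,\Gamma')$-adaptedness holds; in the case $\G=\S$, the semigroup $\S$ computed inside $\G$ still equals $\G$ (since $\supp(\mu)\subset\G$ and $\G$ is closed), so $\mu$ is strongly adapted in $\G$ and the second bullet holds. I would also note that the Haar measure class $[m_X]$ on $X=G/\Gamma$, restricted to the clopen orbit $\G x_0=X$, corresponds under the above identification to the Haar measure class on $\G/\Gamma'$ (this follows from the construction of $\chi$-quasi-invariant measures and the fact that $\G$-orbits are open), and $P$-subinvariance transfers accordingly because the transition kernel of the random walk depends only on $\mu$.

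Having set up this reduction, Proposition \ref{prop:semigroup_transitive} applied in $\G$ directly gives that $S=\bigcup_{n\ge 1}\supp(\mu^{*n})$ acts transitively on $\G/\Gamma'\cong X$, and Proposition \ref{prop:spread_out_irr} gives that the random walk is $\psi$-irreducible with $[m_X]\ll\psi$; combined with the subinvariance hypothesis and the first half of that same proposition, one obtains $\psi\ll[m_X]$, so $\psi\sim[m_X]$. The only mild subtlety I anticipate is the bookkeeping in the paragraph above---checking that the Haar measure class and the subinvariance condition both descend to the $\G$-picture---but this is routine once the identification $\G x_0=X$ has been established.
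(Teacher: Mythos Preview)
Your proof is correct and follows essentially the same route as the paper: reduce to the adapted situation via Lemma~\ref{lem:adapted} and connectedness of $X$, then invoke Propositions~\ref{prop:semigroup_transitive} and~\ref{prop:spread_out_irr}. The paper's proof is terser---it simply says these propositions ``give all conclusions''---whereas you spell out the bookkeeping (transfer of the Haar measure class and subinvariance to $\G/\Gamma'$, and the combination of both halves of Proposition~\ref{prop:spread_out_irr} to obtain $\psi\sim[m_X]$), but the underlying argument is the same.
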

\begin{proof}
Lemma~\ref{lem:adapted} implies that $\G x$ is clopen, hence equal to $X$ by connectedness. 
The same lemma thus allows us to assume that $\mu$ is adapted without changing $X$. 
Then, using Proposition~\ref{prop:fin_vol_adapted} in the finite volume case, we see that $\mu$ is $\Gamma$-adapted. 
Lemma~\ref{lem:semigroup_transitive} and Proposition~\ref{prop:spread_out_irr} now give all conclusions. 
\end{proof}
\subsection{Periodicity}
Proposition~\ref{prop:spread_out_irr} states that for (reasonably nice) spread out random walks we have at our disposal the whole theory of $\psi$-irreducible Markov chains from~\S\ref{sec:markov_theory}. 
In particular, we know from Theorem~\ref{thm:markov_periodicity} that they have a well-defined period $d\in\N$. 
Let us look at the sets $D_i$ in a corresponding $d$-cycle in more detail. 
\begin{proposition}\label{prop:periodicity}
Let $\mu$ be a probability measure on $G$. 
Suppose that the random walk on $X$ given by $\mu$ is $\psi$-irreducible with $\psi\sim[m_X]$ and let $d\in\N$ be its period. 
Then there exist subsets $D_0,\dots,D_{d-1}$ of $X$ with the following properties: 
\begin{enumerate}
\item The $D_i$ are clopen, non-empty, and form a partition of $X$, 
\item we have $P(x,D_{i+1\bmod d})=1$ for every $x\in D_i$ and $gD_i=D_{i+1\bmod d}$ for every $g\in\supp(\mu)$, 
\item if $\G_d$ denotes the closed subgroup of $G$ generated by $\supp(\mu^{*d})$, then for every $x\in D_i$ we have $D_i=\G_d x$, and 
\item the $d$-step random walk on each $D_i$ is $\psi$-irreducible and aperiodic, 
\end{enumerate}
where always $i=0,\dots,d-1$. 
\end{proposition}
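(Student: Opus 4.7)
The plan is to combine the general Markov-chain periodicity theorem (Theorem~\ref{thm:markov_periodicity}) with the structural information coming from $\mu$ being spread out. First I verify that $\mu$ is in fact spread out: the argument from the proof of Proposition~\ref{prop:spread_out_irr} applies using $\psi \ll [m_X]$, since a non-spread-out $\mu$ would furnish a right-$\Gamma$-invariant Borel set $E_G \subset G$ with $\mu^{*n}(E_G) = 1$ for all $n$ and $m_G(E_G) = 0$, whose complement in $X$ is $\psi$-positive yet never visited from $e\Gamma$. Hence the walk is a $T$-chain by Proposition~\ref{prop:T_equivalence}, and Lemma~\ref{lem:adapted} applied to $\mu^{*d}$ (itself spread out) shows $\G_d$ is open in $G$ and every $\G_d$-orbit in $X$ is clopen. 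The same ingredients give transitivity of $\G$ on $X$: a proper $\G$-orbit would have clopen $[m_X]$-positive (hence $\psi$-positive) complement, which cannot be reached since $\supp(\mu^{*n}) \subseteq \G$.

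I then invoke Theorem~\ref{thm:markov_periodicity} to obtain a measurable $d$-cycle $D_1',\ldots,D_d'$ with $\psi$-full union. The central task is to establish that $\G_d$ is a closed normal subgroup of $\G$ of index exactly $d$. Normality comes from a short algebraic trick: for $g \in \supp(\mu)$ and $h \in \supp(\mu^{*d})$, the identity $g h g^{-1} = (g h g^{d-1})(g^d)^{-1}$ places both factors in $\G_d$ (the first since $g h g^{d-1} \in \supp(\mu^{*(2d)}) \subseteq \G_d$), so $g \G_d g^{-1} = \G_d$ extends to all $g \in \G$ by closure and inversion. Similarly, $g_0^{-1} g = g_0^{-d}(g_0^{d-1}g) \in \G_d$ for any $g, g_0 \in \supp(\mu)$ gives $\supp(\mu) \subseteq g_0 \G_d$, so $\G/\G_d$ is cyclic, generated by $g_0 \G_d$, of order $d' := [\G : \G_d]$ dividing $d$.

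The main obstacle is upgrading $d' \mid d$ to $d' = d$. Suppose for contradiction $0 < d' < d$. Since $g_0^{d'} \in \G_d$, normality gives $\supp(\mu^{*n}) \subseteq \G_d$ for every multiple $n$ of $d'$. Meanwhile, Fubini applied to $P^d(x, D_i') = 1$ for $x \in D_i'$, together with $L^1$-continuity of left-translation, shows that the closed subgroup of $G$ stabilizing $D_i'$ modulo $[m_X]$-null contains $\supp(\mu^{*d})$ and hence $\G_d$; in particular each $D_i'$ is a union of $\G_d$-orbits mod null. Pick $x \in D_1'$ with $\G_d x \subseteq D_1'$ mod null, and take $n$ a multiple of $d'$ large enough that $\mu^{*n}$ is non-singular with respect to $m_G$ and $n \not\equiv 0 \pmod d$. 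Then $\mu^{*n}*\delta_x$ has an absolutely continuous component concentrated on $\G_d x \subseteq D_1'$, and so gives positive mass to $D_1'$; yet the $d$-cycle forces $\mu^{*n}*\delta_x(D_{1+n}') = 1$ with $D_1' \cap D_{1+n}' = \emptyset$. This contradiction forces $d = d'$.

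Finally, set $D_i := g_0^{i-1}\G_d x_0 = \G_d g_0^{i-1}x_0$. Each $D_i$ is a $\G_d$-orbit, hence clopen, and $\G_d g_0^{i-1}x_0 \subseteq D_i'$ mod null (inheriting from $\G_d x_0 \subseteq D_1'$ via cyclic shifts by $g_0$), so pairwise disjointness of the $D_i'$ forces the $D_i$ to be distinct; together with $X = \G x_0 = \bigcup_{i=1}^d g_0^{i-1}\G_d x_0$ this yields the partition in (i), while (iii) is immediate. For (ii), $g \in \supp(\mu) \subseteq g_0\G_d$ and normality give $gD_i = g_0\G_d g_0^{i-1}x_0 = g_0^i\G_d x_0 = D_{i+1}$, whence $P(x, D_{i+1}) = \mu(g_0\G_d) = 1$ for $x \in D_i$. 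For (iv), $\psi$-irreducibility of $P^d$ on $D_i$ is inherited from that of $P$ on $X$ (any $\psi$-positive $A \subseteq D_i$ is reachable from $x \in D_i$ by $P$, but since $P^n(x, D_i) = 0$ unless $d \mid n$, it is in fact reachable by $P^d$), and aperiodicity follows since a nontrivial $d''$-cycle for $P^d$ on $D_i$ would interleave with the $d$-cycle of $P$ to produce a cycle of length $d \cdot d'' > d$ for $P$, contradicting the maximality of $d$.
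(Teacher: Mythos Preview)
Your proof is essentially correct and takes a genuinely different route from the paper's. The paper upgrades the measurable cycle $D_1',\dots,D_d'$ to a clopen one \emph{directly} via the continuous component: it sets $f_i = T\mathds{1}_{D'_{\sigma(i)}}$ for a suitable permutation $\sigma$, shows these continuous functions are $\{0,\alpha\}$-valued because the sets $f_i^{-1}((0,\alpha))$ are open $\psi$-null sets, and defines $D_i \coloneqq f_i^{-1}(\{\alpha\})$; the identification $D_i = \G_d x$ is then derived at the end. You instead establish the algebraic fact $[\G:\G_d]=d$ first, by a nice contradiction argument that plays the absolutely continuous part of a suitable $\mu^{*n}$ (with $d'\mid n$ but $d\nmid n$) against the $d$-cycle, and then simply take the $D_i$ to be the $\G_d$-orbits. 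Your approach yields (iii) for free and makes (ii) an immediate consequence of $\supp(\mu)\subset g_0\G_d$, avoiding the paper's limiting argument for $P(x,D_{i+1})=1$ at boundary points; the paper's approach in turn avoids the group-theoretic detour and the need to show that the null-stabilizer of $D_i'$ is closed.

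Two points deserve tightening. First, your appeal to ``$L^1$-continuity of left-translation'' to pass from $\mu^{*d}$-a.e.\ $g$ to all $g\in\G_d$ is not justified as stated: the space $X$ need not carry a $G$-invariant measure (only the Haar measure \emph{class} is available), so the standard continuity of the regular representation does not directly apply. The clean fix is a Steinhaus argument: the set $H=\{g:gD_i'=D_i'\text{ mod null for all }i\}$ is a subgroup with $\mu^{*d}(H)=1$; since $\mu^{*d}$ has a nontrivial absolutely continuous part, $H$ contains a set of positive Haar measure, hence $H=HH^{-1}$ contains an identity neighborhood and is therefore open (and closed), forcing $\G_d\subset H$. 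Second, your aperiodicity argument in (iv) via ``interleaving'' a $d''$-cycle of $P^d$ on $D_i$ with the $d$-cycle of $P$ to obtain a $dd''$-cycle for $P$ is not quite rigorous: the forward images under $P$ of a set $E_j\subset D_i$ need not land cleanly in a single piece of a cycle in $D_{i+1}$. The conclusion is correct, but it is safest either to invoke the standard fact (as the paper does, via \cite[Proposition~5.4.6]{MT}) that $P^d$ on each member of a maximal $d$-cycle is aperiodic, together with your established relation $D_i=D_i'$ mod null, or to carry out the refinement more carefully.
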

In other words, a general spread out random walk governed by $\mu$ splits up into $d$ aperiodic spread out random walks governed by the $d$-fold convolution power $\mu^{*d}$. 
\begin{proof}
Throughout the proof, the terms \enquote{null set} or \enquote{full measure set} are understood with respect to the Haar measure class on $X$, to which $\psi$ belongs by assumption. 
We shall make repeated use of the fact that open null sets are empty. 
Moreover, for ease of notation, we will drop the specifier \enquote{$\bmod d$} from the indices, implicitly viewing them as elements of $\Z/d\Z$. 

Let $D_0',\dots,D_{d-1}'\subset X$ be a $d$-cycle as in Theorem~\ref{thm:markov_periodicity}. 
Proposition~\ref{prop:spread_out_irr} shows that $\mu$ is spread out. 
Thus, by Proposition~\ref{prop:T_equivalence}, for some $n_0\in \N$ there is a continuous component $T$ of $P^{n_0}$ with the property that $T\mathds{1}_X$ is constant, say $T\mathds{1}_X\equiv \alpha\in(0,1]$, and $T\mathds{1}_{D_i'}\colon X\to[0,\alpha]$ is continuous for $i=0,\dots,d-1$. 
By the properties of a $d$-cycle there exists a cyclic permutation $\sigma$ of $\set{0,\dots,d-1}$ such that $P^{n_0}\mathds{1}_{D_{\sigma(i)}'}$ is $1$ on $D_i'$ and $0$ on $\bigcup_{j\neq i}D_j'$. 
Together with the above this implies that $f_i=T\mathds{1}_{D_{\sigma(i)}'}$ is $\alpha$ on $D_i'$ and $0$ on $\bigcup_{j\neq i}D_j'$. 
The claim is that the sets 
\begin{align*}
D_i=f_i^{-1}(\set{\alpha})
\end{align*}
have the desired properties. 
Indeed, by construction we know that on each fixed set $D_i'$ the function $f_i$ is $\alpha$ and all other $f_j$ are $0$. 
In particular, the sets $f_i^{-1}((0,\alpha))$ are contained in the complement of the full measure set $\bigcup_{j=0}^{d-1} D_j'$. 
Being open by continuity, they must thus be empty. 
This means that the $f_i$ are in fact continuous maps from $X$ to the discrete space $\set{0,\alpha}$. 
The sets $P_v=\set{f_0=v_0,\dots,f_{d-1}=v_{d-1}}$ defined by value tuples $v=(v_0,\dots,v_{d-1})\in\set{0,\alpha}^d$ thus form a partition of $X$ consisting of clopen sets. 
However, for every such tuple $v$ not having precisely one entry $\alpha$ we know that the corresponding set $P_v$ is again contained in the complement of $\bigcup_{j=0}^{d-1} D_j'$, so that $P_v=\emptyset$ by the same logic as above. 
Altogether, this shows that the non-empty sets in the so-constructed partition are precisely the $D_i$, proving (i). 

To show that $P(x,D_{i+1})=1$ for each $x\in D_i$, note first that by definition of a $d$-cycle we know $P(x,D_{i+1})\ge P(x,D_{i+1}')=1$ whenever $x\in D_i'$. 
To extend this to $x\in D_i$, we claim that 
\begin{align*}
D_i=\overline{D_i'}.
\end{align*}
Indeed, the inclusion \enquote{$\supset$} follows from the $D_i$ being clopen and the differences $D_i\setminus\overline{D_i'}$ are empty because they are open sets contained in a null set. 
Thus, we may choose a sequence $(x_n)_n$ in $D_i'$ converging to a given $x\in D_i$. 
Writing $x=g\Gamma$, $x_n=g_n\Gamma$, and $\pr\colon G\to X$ for the canonical projection, we find 
\begin{align*}
P(x,D_{i+1})=\mu(\pr^{-1}(D_{i+1})g^{-1})=\lim_{n\to\infty}\mu(\pr^{-1}(D_{i+1})g_n^{-1})=1
\end{align*}
by dominated convergence, since by clopenness the indicator functions of the sets $\pr^{-1}(D_{i+1})g_n^{-1}$ converge pointwise to that of $\pr^{-1}(D_{i+1})g^{-1}$. 

Next, take $g\in\supp(\mu)$ and $x\in D_i$. 
Then for any neighborhood $B$ of $g\in G$ we have $P(x,Bx)>0$. 
If $gx\notin D_{i+1}$, this would contradict $P(x,D_{i+1})=1$ by choosing $B$ small enough. 
Hence, $gD_i\subset D_{i+1}$. 
But the same argument applied to $x\in g^{-1}D_{i+1}$ shows that such an $x$ needs to lie in $D_i$, so that also $g^{-1}D_{i+1}\subset D_i$. 
This proves (ii). 

For (iii), note that $D_i\supset \G_d x$ follows by combining \eqref{convolution_support}, part (ii) above, and clopenness of $D_i$. 
The set $D_i\setminus\G_d x$ is open, since both $D_i$ and $\G_d x$ are clopen (the latter by Lemma~\ref{lem:adapted}, using that also $\mu^{*d}$ is spread out) and 
\begin{align*}
P^n(x,D_i\setminus\G_d x)=0
\end{align*}
for all $n\in\N$: 
Indeed, if $d\mid n$ we have $P^n(x,\G_d x)=1$ and if $d\nmid n$ then $P^n(x,D_j)=1$ for some $j\neq i$. 
The assumed $\psi$-irreducibility thus forces $D_i\setminus\G_d x=\emptyset$, giving (iii). 

It remains to prove (iv). 
Knowing from (ii) that the random walk cycles through the sets $D_0,\dots,D_{d-1}$, $\psi$-irreducibility of the $d$-step random walk on every $D_i$ follows from $\psi$-irreducibility of the whole random walk. 
From \cite[Proposition~5.4.6]{MT} we know that the $d$-step random walk on the full measure subset $D_i'$ of $D_i$ is aperiodic. 
Suppose that the $d$-step random walk in $D_i$ has a period strictly larger than $1$. Then we can apply what we have already proved and deduce that $D_i$ splits into a non-trivial cycle of clopen subsets. By the second statement in (ii), none of the sets in such a cycle can be null sets. 
Restricting to $D_i'$ would thus produce a non-trivial cycle inside $D_i'$, which is a contradiction. 
\end{proof}
It is natural to ask when the particularly desirable aperiodic case $d=1$ occurs. 
\begin{definition}
If $\mu$ has the property that the induced random walk on $\G x$ is $\psi$-irreducible and aperiodic for every $x\in X$, we call $\mu$ \emph{aperiodic on $X$}. 
\end{definition}
\begin{proposition}\label{prop:aperiodicity}
Let $\mu$ be spread out. 
Suppose that 
\begin{itemize}
\item $X$ has finite volume, or that 
\item $\G=\calS$. 
\end{itemize}
Then either one of the following conditions is sufficient for $\mu$ to be aperiodic on $X$: 
\begin{enumerate}
\item $X$ is connected, 
\item $\mu$ is aperiodic in the sense of Definition~\textup{\ref{def:spread_out_aper}}. 
\end{enumerate}
\end{proposition}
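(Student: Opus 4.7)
The plan is to fix an arbitrary $x = g\Gamma \in X$ and establish that the period $d$ of the induced random walk on the orbit $\G x$ equals~$1$. First I set up the framework: by Lemma~\ref{lem:adapted} the orbit $\G x$ is clopen in $X$ and isomorphic to $\G/(\G \cap g\Gamma g^{-1})$, and combined with either of the bullet hypotheses (finite volume or $\G = \S$), this makes $\mu$, viewed as a measure on $\G$, a $(\G, \G \cap g\Gamma g^{-1})$-adapted spread out measure. The subinvariance of $[m_X]$ descends to that of $[m_{\G x}]$ (via $\int_\G \chi\dd\mu = \int_G \chi\dd\mu \le 1$), so Proposition~\ref{prop:spread_out_irr} yields $\psi$-irreducibility of the walk on $\G x$ with $\psi \sim [m_{\G x}]$. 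Applying Proposition~\ref{prop:periodicity} on $\G x$ then produces the period $d \in \N$ and a clopen partition $\G x = D_1 \sqcup \cdots \sqcup D_d$ satisfying $g D_i = D_{i+1}$ for all $g \in \supp(\mu)$.

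Case~(i) is a short topological observation: the non-empty clopen subset $\G x$ of the connected space $X$ must coincide with $X$, and then the $D_i$ form $d$ non-empty clopen sets partitioning the connected space $X$, which forces $d = 1$.

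For case~(ii) I argue by contradiction, assuming $d > 1$ and producing a forbidden subgroup. The key construction is a continuous surjective homomorphism $\tilde{\phi}\colon \G \to \Z/d\Z$ with $\tilde{\phi}(\supp(\mu)) = \{1\}$. To build it, I first note that for every $y \in D_i$ and every $j$, the set $\{h \in \G : hy \in D_j\}$ is clopen in $\G$ by continuity of the action combined with clopenness of $D_j$. On the algebraic subgroup $\langle \supp(\mu) \rangle$, item~(ii) of Proposition~\ref{prop:periodicity} shows that a signed word of net length $k$ in $\supp(\mu)$ acts as the uniform cyclic shift $D_i \mapsto D_{i+k}$, and a density/local-constancy argument using $\G = \overline{\langle \supp(\mu) \rangle}$ extends the ``uniform shift'' description to all of $\G$: for each $h \in \G$ one verifies that $hD_i \subset D_{j(h,i)}$ for a well-defined $j(h,i)$ and that the shift $j(h,i) - i \pmod{d}$ is independent of $i$. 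Defining $\tilde{\phi}(h)$ to be this common value gives a homomorphism by $gh \cdot D_i = g(hD_i)$, continuity by the local constancy of the $D_j$-landing sets, and surjectivity from $\tilde{\phi}(\supp(\mu)) = \{1\}$. The kernel $H \coloneqq \ker \tilde{\phi}$ is then a proper normal open subgroup of $\G$ with $\G/H \cong \Z/d\Z$ abelian, so $[\G, \G] \subset H$, while $\supp(\mu) \subset \tilde{\phi}^{-1}(1)$ lies in a single coset of $H$ -- exactly the configuration forbidden by Definition~\ref{def:spread_out_aper}, yielding the desired contradiction.

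The main obstacle will be the density/local-constancy step in case~(ii) -- promoting the algebraically defined shift map on $\langle \supp(\mu) \rangle$ to a continuous homomorphism on the closure $\G$, including verifying that the single-integer shift description does not break when passing to the limit. Everything else follows routinely from the machinery of Propositions~\ref{prop:spread_out_irr} and~\ref{prop:periodicity} already in place.
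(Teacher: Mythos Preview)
Your proposal is correct and follows essentially the same approach as the paper: reduce to the orbit via Lemma~\ref{lem:adapted}, apply Propositions~\ref{prop:spread_out_irr} and~\ref{prop:periodicity} to obtain the clopen $d$-cycle, dispose of case~(i) by connectedness, and in case~(ii) build a continuous surjection $\G\to\Z/d\Z$ whose kernel violates Definition~\ref{def:spread_out_aper}. The only cosmetic difference is that the paper phrases the construction in~(ii) as a map into the symmetric group of $\{D_1,\dots,D_d\}$ (with image $\Z/d\Z$), which packages your density/local-constancy step into a single sentence but relies on the same underlying argument.
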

\begin{proof}
We first replace the pair $(G,X)$ by $(\G,\G x)$ using Lemma~\ref{lem:adapted}. 
Then $\mu$ is $\Gamma$-adapted (in the finite volume case by Proposition~\ref{prop:fin_vol_adapted}), so that by Proposition~\ref{prop:spread_out_irr} the random walk on $X$ is $\psi$-irreducible with $\psi$ in the Haar measure class. 

Sufficiency of (i) is then evident from Proposition~\ref{prop:periodicity}, since it shows that the sets in a $d$-cycle may be chosen to be clopen. 

For (ii), we argue by contradiction and assume that the period $d$ of the random walk on $X$ is at least $2$. Let us partition $X=\G x$ into clopen sets $D_0,\dots,D_{d-1}$ as in Proposition~\ref{prop:periodicity}. 
Its part (ii) implies that all elements of $\supp(\mu)$ act on the $D_i$ by the cyclic permutation $D_0\mapsto D_1\mapsto\dots\mapsto D_{d-1}\mapsto D_0$. 
Since $\supp(\mu)$ generates $G$ topologically and the $D_i$ are clopen, this yields a continuous homomorphism $\varphi$ from $G=\G$ into the symmetric group of $\set{D_0,\dots,D_{d-1}}$ with image $\varphi(G)\cong \Z/d\Z$. 
But then, the kernel $N=\ker(\varphi)$ is a normal open subgroup of $G$, which contains $[G,G]$ since the quotient $G/N\cong \Z/d\Z$ is abelian, and such that $\supp(\mu)$ is contained in a non-identity--coset of $N$, since $\varphi(\supp(\mu))=\set{1+d\Z}$ under the identification $\varphi(G)\cong \Z/d\Z$. 
Hence, (ii) does not hold. 
\end{proof}
In particular, adapted spread out probability measures are automatically aperiodic on any finite volume quotient when $G$ is connected or a \emph{perfect} group, i.e.\ one with $G=[G,G]$. 
An example of the latter case not covered by the first is $G=\SL_d(\Q_p)$. 
This is an instance of a more general fact. 
\begin{corollary}
Let $k$ be the field $\R$ of real numbers or the field $\Q_p$ of $p$-adic numbers for a prime $p$. Suppose that $G=\mathbb{G}(k)$ for a Zariski connected, simply connected, semisimple algebraic group $\mathbb{G}$ defined over $\Q$ such that $G$ has no compact factors, and let $\mu$ be a spread out probability measure on $G$. 
Then $\mu$ is aperiodic on $X=G/\Gamma$ in both of the following cases: 
\begin{enumerate}
\item $\mu$ is adapted and $X$ has finite volume, 
\item $\mu$ is \emph{strongly adapted}, meaning that $\calS=G$. 
\end{enumerate}
\end{corollary}
\begin{proof}
By \cite[Corollary~2.3.2(b)]{Mar_book} $G$ is perfect, so  Proposition~\ref{prop:aperiodicity}(ii) applies. 
\end{proof}
\subsection{Harris Recurrence}\label{subsec:harris_rec}
As final part of our qualitative analysis, we establish Harris recurrence of spread out random walks for homogeneous spaces $X$ with at most quadratic growth. 
As warm-up, let us show how recurrence can be deduced from what we have already proved in the finite volume case. 
\begin{proposition}\label{prop:harris_rec}
Suppose $\Gamma\subset G$ is a lattice and that the random walk on $X$ induced by $\mu$ is $\psi$-irreducible. 
Then this random walk is positive Harris recurrent. 
\end{proposition}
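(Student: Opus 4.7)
The plan is to assemble the qualitative facts already established. Since $\Gamma$ is a lattice, $X$ carries a $G$-invariant probability measure $m_X$, and a direct calculation using Fubini and the $G$-invariance of $m_X$ shows $m_X P = m_X$, so $[m_X]$ is $P$-subinvariant. Being $\psi$-irreducible and admitting a finite invariant measure, the chain is positive by definition; Proposition \ref{prop:positive} then upgrades this to recurrence and identifies the normalization of $m_X$ as a maximal irreducibility measure, so $\psi \sim m_X$. In particular, $\psi$ inherits the full topological support of Haar measure.

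With $\psi$-irreducibility and subinvariance of $[m_X]$ in hand, Proposition \ref{prop:spread_out_irr} yields that $\mu$ is spread out, and Proposition \ref{prop:T_equivalence} then turns the random walk into a $T$-chain. Theorem \ref{thm:T_decomposition} supplies a decomposition $X = H \sqcup N$ with $H$ the Harris set and every point of $N$ topologically transient. It remains to argue $N = \emptyset$.

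For this final step, suppose for contradiction that $x_0 \in N$; then there is an open neighborhood $B \ni x_0$ with $U(x_0, B) < \infty$. Since $\psi \sim m_X$ has full topological support, the non-empty open set $B$ satisfies $\psi(B) > 0$. But by recurrence every $\psi$-positive set $A$ has $U(x, A) = \infty$ for all $x \in A$; applied to $A = B$ and $x = x_0 \in B$, this contradicts the choice of $x_0$. Hence $N = \emptyset$, the chain is Harris recurrent on all of $X$, and combined with positivity this gives positive Harris recurrence.

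There is no real obstacle; the work is simply in correctly assembling the pieces. The one point of substance is the final observation, which exploits the full-support property of $[m_X]$ to turn topological transience of a single state into a contradiction with $\psi$-irreducible recurrence, thereby promoting abstract recurrence to pointwise Harris recurrence at every point of $X$.
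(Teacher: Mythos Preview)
Your proof is correct and follows essentially the same route as the paper: establish positivity from the invariant probability $m_X$, deduce $\psi\sim m_X$ via Proposition~\ref{prop:positive}, use Proposition~\ref{prop:spread_out_irr} and Proposition~\ref{prop:T_equivalence} to get the $T$-chain property, and then rule out topologically transient points in the decomposition of Theorem~\ref{thm:T_decomposition} by exploiting that $\psi$ has full support so every open neighborhood is $\psi$-positive and hence recurrent. The only cosmetic difference is that you phrase the final contradiction using $U(x_0,B)=\infty$ for $x_0\in B$ directly from the definition of a recurrent set, whereas the paper states the slightly stronger (but standard) fact that $U(x,B)=\infty$ for all $x\in X$.
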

\begin{proof}
Positive recurrence follows from Proposition~\ref{prop:positive}, since $m_X$ is an invariant probability measure. 
In order to upgrade this to Harris recurrence, we will show that the set $N$ in the decomposition $X=H\sqcup N$ from Theorem~\ref{thm:T_decomposition} must be empty. 
(This theorem can be applied since we know from Proposition~\ref{prop:spread_out_irr} that $\mu$ must be spread out, so that the random walk is a $T$-chain by Proposition~\ref{prop:T_equivalence}.) 
It thus only remains to show that there are no topologically transient points. 
But this is easily seen: 
Proposition~\ref{prop:positive} also implies that $m_X$ is equivalent to $\psi$, so that every non-empty open subset of $X$ is $\psi$-positive. 
Recalling the definition of recurrence from Theorem~\ref{thm:recurrence}, it follows that $U(x,B)=\infty$ for every neighborhood $B$ of any point $x\in X$. 
This precisely means that every point of $X$ is topologically recurrent. 
We thus conclude that $N=\emptyset$, finishing the proof. 
\end{proof}

The remainder of this section is dedicated to the proof of Theorem~\ref{thm:quadratic_recurrence}. 
The following proposition contains the essential lower bound. 
\begin{proposition}\label{prop:lower_bound}
Suppose that $X$ admits a Haar measure $m_X$. 
Let $V$ be a symmetric relatively compact neighborhood of the identity in $G$, $A\subset V\Gamma\subset X$ a positive measure set, and $\mu$ a symmetric probability measure on $G$ with $\supp(\mu)\subset V$. 
Then for $n,\ell\in\N$ satisfying 
\begin{align*}
\ell\ge\sqrt{n\log\frac{16m_X(V^{n+1}\Gamma)}{m_X(A)}}
\end{align*}
we have 
\begin{align*}
\langle P^{2n}\mathds{1}_A,\mathds{1}_A\rangle\ge \frac{m_X(A)^2}{4m_X(V^\ell\Gamma)},
\end{align*}
where $\langle\cdot,\cdot\rangle$ denotes the pairing $\langle \varphi,\psi\rangle=\int_X\varphi\psi\dd m_X$ for measurable functions $\varphi,\psi$ on $X$. 
\end{proposition}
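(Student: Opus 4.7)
My plan is to exploit reversibility. Since $\mu$ is symmetric and $m_X$ is $G$-invariant, the transition operator $P$ is self-adjoint on $L^2(X,m_X)$ with operator norm at most one. In particular $\langle P^{2n}\mathds{1}_A,\mathds{1}_A\rangle = \|P^n\mathds{1}_A\|_2^2$, and Cauchy--Schwarz applied to the pairing with $\mathds{1}_{V^\ell\Gamma}$ yields
\[
\|P^n\mathds{1}_A\|_2^2 \geq \frac{\bigl(\int_{V^\ell\Gamma} P^n\mathds{1}_A\,dm_X\bigr)^2}{m_X(V^\ell\Gamma)}.
\]
The invariance identity $\int_X P^n\mathds{1}_A\,dm_X = m_X(A)$ then reduces the problem to showing that the tail
\[
\beta := \int_{X\setminus V^\ell\Gamma} P^n\mathds{1}_A\,dm_X
\]
is at most $m_X(A)/2$.

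To control $\beta$, I would run a Carne--Varopoulos-type argument. Because $A\subset V\Gamma$ and $V=V^{-1}$, the function $P^n\mathds{1}_A$ is supported in $V^n A\subset V^{n+1}\Gamma$, so $\beta = \langle P^n\mathds{1}_A,\mathds{1}_B\rangle$ with $B := V^{n+1}\Gamma\setminus V^\ell\Gamma$, a set of measure at most $m_X(V^{n+1}\Gamma)$. Expanding the monomial in Chebyshev polynomials,
\[
\lambda^n = \sum_{k=0}^{n}c_{k,n}\,T_k(\lambda), \qquad c_{k,n} = \frac{2^{1-\delta_{k,0}}}{2^n}\binom{n}{(n-k)/2}
\]
(understood to vanish unless $k\equiv n\pmod 2$), and applying the bounded self-adjoint functional calculus gives $P^n = \sum_k c_{k,n}\,T_k(P)$ with $\|T_k(P)\|_{op}\leq 1$. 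The crucial observation is that $T_k(P)$ is a polynomial in $P$ of degree $k$, so $T_k(P)\mathds{1}_A$ is supported in $V^{k+1}\Gamma$; hence $\langle T_k(P)\mathds{1}_A,\mathds{1}_B\rangle$ vanishes for every $k\leq \ell-1$. Combining this cancellation with Cauchy--Schwarz and the Hoeffding-type binomial tail bound
\[
\sum_{k\geq\ell}c_{k,n}\leq 2\Pr\bigl[\mathrm{Bin}(n,\tfrac{1}{2})\leq \tfrac{n-\ell}{2}\bigr]\leq 2e^{-\ell^2/(2n)},
\]
I would arrive at $\beta \leq 2\sqrt{m_X(A)\,m_X(V^{n+1}\Gamma)}\,e^{-\ell^2/(2n)}$.

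The hypothesis $\ell \geq \sqrt{n\log(16\,m_X(V^{n+1}\Gamma)/m_X(A))}$ is calibrated exactly so that this right-hand side equals $m_X(A)/2$, closing the argument. The main obstacle is the Carne--Varopoulos cancellation step: one must recognize that $T_k(P)$ is a genuine \emph{polynomial} in $P$ (and not merely a spectral object) in order to propagate the finite-range property of the random walk to $T_k(P)\mathds{1}_A$, killing the low-$k$ contributions to the pairing with $\mathds{1}_B$, and then control the remaining Chebyshev coefficients through the sharp binomial Hoeffding estimate. The rest of the proof---self-adjointness, support tracking in the quotient $X=G/\Gamma$, and the final numerical verification---is routine bookkeeping once this core observation is in place.
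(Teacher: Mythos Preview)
Your proposal is correct and follows essentially the same route as the paper's proof: self-adjointness plus Cauchy--Schwarz reduces to the tail bound $\langle P^n\mathds{1}_A,\mathds{1}_{(V^\ell\Gamma)^c}\rangle\le m_X(A)/2$, which is then obtained via Carne's Chebyshev expansion $P^n=\sum_k c_{k,n}T_k(P)$, the finite-range support property of $T_k(P)\mathds{1}_A$, and the binomial tail estimate $\sum_{k\ge\ell}c_{k,n}\le 2e^{-\ell^2/(2n)}$. The only cosmetic difference is that you restrict the pairing to $B=V^{n+1}\Gamma\setminus V^\ell\Gamma$ explicitly, whereas the paper keeps $(V^\ell\Gamma)^c$ and inserts $\mathds{1}_{V^{n+1}\Gamma}$ inside the Cauchy--Schwarz step.
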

The proof is adapted from Lust-Piquard~\cite{LP_lower_bounds} and contains ideas going back to Carne~\cite{Ca}. 
\begin{proof}
From the defining property $P\varphi(x)=\int_G\varphi(gx)\dd\mu(g)$ of the action of $P$ on measurable functions and invariance of $m_X$ we get that $P$ is a well-defined operator from $L^1(m_X)$ to itself as well as from $L^\infty(m_X)$ to itself, with operator norm bounded by $1$ in both cases. 
By interpolation, the same is true for all $L^p$-spaces. 
Symmetry of $\mu$ implies that $P$ is self-adjoint in the sense that $\langle P\varphi,\psi\rangle=\langle \varphi,P\psi\rangle$ whenever these pairings are defined. 
Using the Cauchy--Schwarz inequality we thus find
\begin{align*}
\langle P^{2n}\mathds{1}_A,\mathds{1}_A\rangle^{1/2}=\norm{P^n\mathds{1}_A}_{L^2(m_X)}\ge \frac{\langle P^n\mathds{1}_A,\mathds{1}_{V^\ell\Gamma}\rangle}{m_X(V^\ell\Gamma)^{1/2}}.
\end{align*}
Writing 
\begin{align*}
\langle P^n\mathds{1}_A,\mathds{1}_{V^\ell\Gamma}\rangle&=\langle P^n\mathds{1}_A,\mathds{1}\rangle-\langle P^n\mathds{1}_A,\mathds{1}_{(V^\ell\Gamma)^c}\rangle=\langle \mathds{1}_A,\underbrace{P^n\mathds{1}}_{=\mathds{1}}\rangle-\langle P^n\mathds{1}_A,\mathds{1}_{(V^\ell\Gamma)^c}\rangle\\
&=m_X(A)-\langle P^n\mathds{1}_A,\mathds{1}_{(V^\ell\Gamma)^c}\rangle,
\end{align*}
we see that it remains to show 
\begin{align}\label{remaining_ineq}
\langle P^n\mathds{1}_A,\mathds{1}_{(V^\ell\Gamma)^c}\rangle\le \frac{m_X(A)}{2}.
\end{align}
We remark that all pairings above are defined since $P^n\mathds{1}_A$ has compact support. 
In fact, by positivity of $P$ we know $0\le P^n\mathds{1}_A\le P^n\mathds{1}_{V\Gamma}$, and the latter function has support in $V^{n+1}\Gamma$ since $\supp(\mu)\subset V$. 

To prove \eqref{remaining_ineq}, we shall use an argument due to Carne~\cite{Ca} (see also \cite[Lemma~1]{LP_lower_bounds}): 
The operator $P^n$ can be written as 
\begin{align*}
P^n=\sum_{0\le k\le n}\alpha_{k,n}Q_k(P),
\end{align*}
where $\alpha_{k,n}=0$ if $n-k$ is odd and otherwise $\alpha_{k,n}=2^{-n+1}\binom{n}{(k+n)/2}$ for $k>0$ and $\alpha_{0,n}=2^{-n}\binom{n}{n/2}$, and $Q_k$ is the $k$-th Chebychev polynomial. 
As the operator $P$ considered on $L^2(m_X)$ is self-adjoint with spectrum contained in $[-1,1]$, the same is true for the operators $Q_k(P)$, since the Chebychev polynomials are real-valued and bounded by $1$ on $[-1,1]$. 
Moreover, $Q_k$ is of degree $k$ so that $Q_k(P)\mathds{1}_A$ is supported in $V^{k+1}\Gamma$ (using the corresponding property of $P^k\mathds{1}_A$ established above). 
Combining these facts we find 
\begin{align*}
\langle P^n\mathds{1}_A,\mathds{1}_{(V^\ell\Gamma)^c}\rangle
&=\sum_{\ell\le k\le n}\alpha_{k,n}\int_{(V^\ell\Gamma)^c}Q_k(P)\mathds{1}_A\dd x\\
&\le \sum_{\ell\le k\le n}\alpha_{k,n}\langle \abs{Q_k(P)\mathds{1}_A},\mathds{1}_{V^{n+1}\Gamma}\rangle\\
&\le m_X(A)^{1/2}m_X(V^{n+1}\Gamma)^{1/2}\sum_{\ell\le k\le n}\alpha_{k,n}\\
&\le 2m_X(A)^{1/2}m_X(V^{n+1}\Gamma)^{1/2}\euler^{-\ell^2/(2n)},
\end{align*}
where the last inequality uses a well-known escape estimate for the symmetric random walk on $\Z$ starting at $0$ (cf.\ e.g.~\cite{Ca}). 
Plugging in the inequality for $\ell$ from the statement of the proposition, we obtain precisely \eqref{remaining_ineq} and the proof is complete. 
\end{proof}
\begin{proof}[Proof of Theorem~\textup{\ref{thm:quadratic_recurrence}}]
Let $V$ be a symmetric relatively compact neighborhood of the identity in $G$ containing $\supp(\mu)$. 
If the random walk on $X$ given by $\mu$ is topologically transient, then by \cite[Theorem~1]{HR} the potential $\sum_{n=1}^\infty P^n\mathds{1}_{V\Gamma}$ is uniformly bounded on $X$. 
In particular, 
\begin{align}\label{contr}
\sum_{n=1}^\infty \langle P^n\mathds{1}_{V\Gamma},\mathds{1}_{V\Gamma}\rangle<\infty.
\end{align}
However, since $X$ has at most quadratic growth, we can apply Proposition~\ref{prop:lower_bound} with $A=V\Gamma$ and $\ell=O(\sqrt{n\log n})$ and find for $n$ large enough that 
\begin{align*}
\langle P^{2n}\mathds{1}_{V\Gamma},\mathds{1}_{V\Gamma}\rangle\ge \frac{m_X(V\Gamma)^2}{4m_X(V^\ell\Gamma)}\ge \frac{C}{n\log n},
\end{align*}
where $C>0$ is a fixed constant. 
This contradicts \eqref{contr}, as $\sum_n\frac{1}{n\log n}=\infty$. 
\end{proof}
In~\S\ref{sec:intro}, Theorem~\ref{thm:quadratic_recurrence} was stated for topological Harris recurrence, since the concept of Harris recurrence was only introduced in~\S\ref{sec:markov_theory}. 
Using the following fact contained in \cite[Theorem~1]{HR}, one also obtains Harris recurrence. 
\begin{proposition}[\cite{HR}]\label{prop:quadr_harris_rec}
Let $\mu$ be an adapted spread out probability measure on~$G$. 
Suppose that there exists a quasi-invariant measure $m_{X,\chi}$ on $X=G/\Gamma$ that is $P$-subinvariant and that the random walk on $X$ induced by $\mu$ is topologically Harris recurrent. 
Then this random walk is $\psi$-irreducible with $\psi\sim[m_X]$ and Harris recurrent. 
\end{proposition}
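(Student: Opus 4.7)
The plan is to bootstrap from the $T$-chain framework: use topological Harris recurrence together with adaptedness to produce a reachable state, then apply Proposition \ref{prop:reachable-irreducible} for $\psi$-irreducibility, identify $\psi$ with $[m_X]$ by the same two-sided argument used in the proof of Proposition \ref{prop:spread_out_irr}, and finally eliminate the topologically transient component of the decomposition provided by Theorem \ref{thm:T_decomposition}. Since $\mu$ is spread out, Proposition \ref{prop:T_equivalence} puts us in the $T$-chain setting from the outset and furnishes an explicit continuous component $T$ of some $P^{n_0}$, arising from an absolutely continuous minorant $p\,\dd m_G$ of $\mu^{*n_0}$.

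The crux is exhibiting a reachable state $x_0$, meaning $L(y, B) > 0$ for \emph{every} $y \in X$ and every neighborhood $B$ of $x_0$. Topological Harris recurrence alone only guarantees this when $y = x_0$, so the missing ingredient is a way to connect an arbitrary starting point to any target neighborhood. Here one would combine adaptedness ($\G = G$, so any two points of $X$ can be joined via a product of elements of $\supp(\mu)$) with the one-step spread furnished by the $T$-chain minorant ($P^{n_0}(y, \cdot)$ dominates a measure whose density is positive on an open set containing $\supp(p)\cdot y$), patched together using the topological return guarantees at intermediate points. This splicing is precisely the content of \cite[Theorem~1]{HR}, which I would invoke as a black box to conclude reachability. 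Proposition \ref{prop:reachable-irreducible} then yields $\psi$-irreducibility.

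Identifying $\psi \sim [m_X]$ mirrors the proof of Proposition \ref{prop:spread_out_irr}: subinvariance of $[m_X]$ together with \cite[Proposition~10.1.2]{MT} forces $\psi \ll [m_X]$, and conversely any $[m_X]$-positive Borel set $A$ admits some $g \in G$ with $T(g\Gamma, A) > 0$, so \cite[Proposition~6.2.1]{MT} in conjunction with reachability gives $L(y, A) > 0$ for every $y$, hence $[m_X] \ll \psi$. Finally, Theorem \ref{thm:T_decomposition} provides a decomposition $X = H \sqcup N$ with $H$ Harris and $N$ consisting of topologically transient points; by hypothesis no point of $X$ is topologically transient, so $N = \emptyset$ and $X = H$, which makes the full chain Harris recurrent.

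The main obstacle is unquestionably the reachability step: $(G, \Gamma)$-adaptedness is not assumed, so the clean transitivity of Proposition \ref{prop:semigroup_transitive} is unavailable, and topological Harris recurrence on its own only controls returns to neighborhoods of the starting point. Bridging this gap between return and transitivity is exactly what Hennion--Roynette's trajectory-splicing argument accomplishes, which is why the proof ultimately has to lean on \cite[Theorem~1]{HR} in an essential way.
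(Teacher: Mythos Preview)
The paper does not provide its own proof of this proposition: it is attributed wholesale to \cite[Theorem~1]{HR}. Your reconstruction, which also invokes \cite[Theorem~1]{HR} for the decisive reachability step and wraps it in the $T$-chain machinery of \parsign\ref{sec:markov_theory}, is therefore consonant with the paper's treatment.

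One loose thread: your argument for $[m_X]\ll\psi$ mimics the proof of Proposition~\ref{prop:spread_out_irr}, where \emph{every} point is reachable thanks to $(G,\Gamma)$-adaptedness; here you have only argued for a single reachable $x_0$, yet the step through \cite[Proposition~6.2.1]{MT} needs reachability of the particular $g\Gamma$ with $T(g\Gamma,A)>0$. Either observe that the Hennion--Roynette splicing is uniform in the target (so every point is in fact reachable), or reorder the last two steps: establish Harris recurrence first via Theorem~\ref{thm:T_decomposition} (which needs only $\psi$-irreducibility, not $\psi\sim[m_X]$), and then note that the subinvariant representative of $[m_X]$ must be invariant and hence, by the uniqueness in Theorem~\ref{thm:inv_meas}, equivalent to $\psi$.
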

We point out that, in view of \eqref{quasi_stat}, the measure $m_{X,\chi}$ is $P$-subinvariant if and only if the character $\chi$ satisfies $\int_G\chi\dd\mu\le 1$. 
Therefore, the first condition in the above proposition is satisfied in particular when $X$ admits a Haar measure $m_X$. 
\section{Consequences}\label{sec:consequences}
In this final section we reap the rewards of the preceding work. 
In the finite volume setting, we will establish total variation norm convergence of the laws $\mathcal{L}(\Phi_n)$ in~\S\ref{subsec:law_conv}, see how existence of Lyapunov functions makes this convergence exponentially fast in~\S\ref{subsec:eff_mix} and~\S\ref{subsec:rate}, and present versions of some classical limit theorems in~\S\ref{subsec:limit_thms}. 
We end the article with the proof of the Ratio Limit Theorem~\ref{thm:ratio_limit_thm} in~\S\ref{subsec:ratio_limit}. 

The standing assumptions from the beginning of~\S\ref{sec:spread_out} are still considered to be in effect. 
Let us quickly review the definition of the total variation norm: 
Given a finite signed measure $\nu$ on $X$ it is defined by 
\begin{align*}
\norm{\nu}=\sup_{\abs{f}\le 1}\abs*{\int_Xf\dd\nu},
\end{align*} 
where the supremum is over all measurable functions $f\colon X\to \C$ bounded by $1$. 
With this definition we have 
\begin{align}\label{other_TV}
\sup_{\substack{A\subset X\\\text{measurable}}}\abs{\nu_1(A)-\nu_2(A)}=\tfrac12\norm{\nu_1-\nu_2}
\end{align}
for two probability measures $\nu_1,\nu_2$ on $X$. 
We remark that some authors use the left-hand side above as definition for the total variation distance. 
Due to the factor of $2$ in \eqref{other_TV}, some care needs to be taken when consulting the literature when concerned with the precise value of constants. 
Given a measurable function $V\colon X\to [1,\infty)$, we also define the \emph{$V$-norm} of a finite signed measure $\nu$ as 
\begin{align*}
\norm{\nu}_V=\sup_{\abs{f}\le V}\abs*{\int_Xf\dd\nu}.
\end{align*}
Note that $\norm{\cdot}=\norm{\cdot}_1\le \norm{\cdot}_V$. 
\subsection{Convergence of the Laws}\label{subsec:law_conv}
Using the results from~\S\ref{sec:spread_out}, we can now easily prove convergence to equilibrium of the $n$-step distributions $\mathcal{L}(\Phi_n)$, which is sometimes referred to as \emph{mixing} of the random walk. 

Recall from the discussion in \S\ref{subsec:trans} that spread out random walks on finite volume spaces are automatically $\psi$-irreducible on each orbit $\G x$ with $\psi$ equivalent to $m_{\G x}$, so that the concept of periodicity treated in Theorem~\ref{thm:markov_periodicity} and Proposition~\ref{prop:periodicity} is available. 
\begin{theorem}\label{thm:mixing}
Suppose that $\Gamma\subset G$ is a lattice. 
Let $\mu$ be spread out and $d\in\N$ be the period of the induced random walk on $\G x$ for some $x\in X$. 
Then for any starting distribution $\nu$ on $\G x$ we have 
\begin{align*}
\norm*{\frac1d\sum_{j=0}^{d-1}\mu^{*(n+j)}*\nu-m_{\G x}}\longrightarrow 0
\end{align*}
as $n\to\infty$. 
\end{theorem}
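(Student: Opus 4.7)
The plan is to reduce to an aperiodic positive Harris recurrent chain on each piece of the cyclic decomposition supplied by Proposition~\ref{prop:periodicity}, invoke the Meyn--Tweedie aperiodic ergodic theorem on each piece, and recombine. First I would use Lemma~\ref{lem:adapted} to replace $(G, X)$ with $(\G, \G x)$, after which $\mu$ is adapted and $X = \G x$. Then Proposition~\ref{prop:spread_out_irr} gives $\psi$-irreducibility of the induced chain with $\psi \sim [m_X]$, and Proposition~\ref{prop:harris_rec} yields positive Harris recurrence with invariant probability measure $m_X$.

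Next, apply Proposition~\ref{prop:periodicity} to obtain clopen sets $D_1, \dots, D_d$ cyclically permuted by $\supp(\mu)$ on which the $d$-step chain is $\psi$-irreducible and aperiodic. Since $m_X$ is $P$-invariant and the $D_i$ are cyclically permuted, $m_X(D_i) = 1/d$, and $d \cdot m_X|_{D_i}$ is a $P^d$-invariant probability measure on $D_i$. Harris recurrence of the full chain transfers directly: starting from $x \in D_i$, the original trajectory visits $A \subset D_i$ only at times divisible by $d$, so the occupation times $\eta_A$ agree under $P$ and under $P^d$. Each restricted $d$-step chain is therefore positive Harris recurrent and aperiodic, so \cite[Theorem~13.0.1]{MT} gives
\[
\norm{P^{dm}(x, \cdot) - d \cdot m_X|_{D_i}} \longrightarrow 0
\]
for every $x \in D_i$ as $m \to \infty$; dominated convergence extends this to $\norm*{\nu_0 P^{dm} - d \cdot m_X|_{D_i}} \to 0$ for every probability measure $\nu_0$ on $D_i$.

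Finally, decompose $\nu = \sum_{i=1}^d \nu|_{D_i}$. For fixed $i$ and $j \in \{0, \dots, d-1\}$, write $n + j = dq + r$ with $0 \le r < d$; then $\mu^{*r} * \nu|_{D_i}$ is concentrated on $D_{i+r}$ by the cyclic action, and the preceding step yields
\[
\mu^{*(n+j)} * \nu|_{D_i} = (\mu^{*r} * \nu|_{D_i}) P^{dq} \longrightarrow \nu(D_i) \cdot d \cdot m_X|_{D_{i+r}}
\]
in total variation as $n \to \infty$. For fixed $i$, as $j$ ranges over $\{0, \dots, d-1\}$ the residue $r$ cycles through $\Z/d\Z$, so
\[
\frac{1}{d} \sum_{j=0}^{d-1} \mu^{*(n+j)} * \nu|_{D_i} \longrightarrow \nu(D_i) \sum_{k=1}^d m_X|_{D_k} = \nu(D_i) \cdot m_X,
\]
and summing over $i$ gives the theorem. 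The only point demanding care is the transfer of Harris recurrence to the restricted $d$-step chains, which is immediate from the cyclic structure in Proposition~\ref{prop:periodicity}(ii); everything else is standard Meyn--Tweedie machinery combined with bookkeeping of residues modulo $d$.
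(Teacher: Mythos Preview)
Your proof is correct and follows essentially the same approach as the paper: reduce to $X=\G x$ via Lemma~\ref{lem:adapted}, establish positive Harris recurrence via Propositions~\ref{prop:spread_out_irr} and~\ref{prop:harris_rec}, decompose along the $d$-cycle from Proposition~\ref{prop:periodicity}, and apply the Meyn--Tweedie aperiodic ergodic theorem on each piece. The only organizational difference is that the paper fixes a residue class of $n$ and lets the average over $j$ sweep through the $D_i$, whereas you decompose $\nu$ first and track residues of $n+j$; you are also slightly more explicit than the paper about why Harris recurrence transfers to the restricted $d$-step chains.
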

\begin{proof}
By Lemma~\ref{lem:adapted} we may assume without loss of generality that $X=\G x$ and that $\mu$ is adapted. 
Then Corollary~\ref{cor:fin_vol_irred} and Proposition~\ref{prop:harris_rec} together imply that the random walk on $X$ is positive Harris recurrent. 
Its unique invariant probability is $m_X$. 
In the aperiodic case $d=1$ the result thus is a direct consequence of \cite[Theorem~13.3.3]{MT}. 

We will now reduce the general case to the aperiodic one. 
Let $D_0,\dots,D_{d-1}\subset X$ be a $d$-cycle with the properties from Proposition~\ref{prop:periodicity}. 
Writing $\nu$ as a convex combination and using the triangle inequality, we may assume that $\nu$ is supported on one of the $D_i$. 
It will be enough to prove the result for $n$ tending to $\infty$ inside each one of the arithmetic progressions $r+d\N$ for $r=0,\dots,d-1$. 
So let us fix one such $r$ and replace $n$ by $nd+r$ in the claimed statement. 
After renumbering the $D_i$ we may assume that $\mu^{*r}*\nu$ is supported inside $D_0$. 
Setting $\nu_i=\mu^{*(i+r)}*\nu$ for $i=0,\dots,d-1$ we have that $\nu_i$ is supported inside $D_i$ and are left to show that
\begin{align*}
\norm*{\frac1d\sum_{i=0}^{d-1}\mu^{*nd}*\nu_i-m_X}\longrightarrow 0
\end{align*}
as $n\to\infty$. 
However, in view of part (iv) of Proposition~\ref{prop:periodicity}, this follows from the aperiodic case, after writing $m_X=\frac1d(m_{D_0}+\dots+m_{D_{d-1}})$ and applying the triangle inequality once more. 
\end{proof}
\subsection{Lyapunov Functions and Effective Mixing}\label{subsec:eff_mix}
Functions enjoying certain contraction properties under a transition kernel are known as \emph{\textup{(}Foster--\textup{)}Lyapunov functions} and have played a major role in questions of recurrence of dynamical systems since their introduction. 
In our setup, they will produce an exponential rate for the conclusion of Theorem~\ref{thm:mixing}. 

Recall that a function $f\colon X\to[0,\infty]$ is called \emph{proper} if for every $R\in[0,\infty)$ the preimage $f^{-1}([0,R])$ is relatively compact. 
\begin{definition}
A proper Borel function $V\colon X\to[0,\infty]$ is called a \emph{Lyapunov function} for a Markov chain on $X$ given by a transition kernel $P$ if there exist constants $\alpha<1$, $\beta\ge 0$ such that $PV\le \alpha V+\beta$. 
\end{definition}
Such a function should be thought of as directing the dynamics of the Markov chain towards the \enquote{center} of the space, where the function value of $V$ is below some threshold. 
\begin{remark}\label{rmk:lyapunov}
Let us collect some immediate observations about Lyapunov functions. 
\begin{enumerate}
\item If $V$ is a Lyapunov function, then so are $cV$ and $V+c$ for any constant $c>0$. 
In particular, one may impose an arbitrary lower bound on $V$. 
This will be relevant at some points, where we want $V$ to take values $\ge 1$. 
\item Given a function $V'\colon X\to[0,\infty]$ as in the definition of a Lyapunov function, except that $V'$ is contracted by some power $P^{n_0}$ instead of $P$, one can construct a Lyapunov function $V$ by setting 
\begin{align*}
V=\sum_{k=0}^{n_0-1}\alpha^{\frac{n_0-1-k}{n_0}}P^kV'.
\end{align*}
\item By enlarging $\alpha$ and using properness, the contraction inequality in the definition of a Lyapunov function $V$ may be replaced by 
\begin{align*}
PV\le \alpha V+\beta\mathds{1}_K
\end{align*}
for some compact $K\subset X$ (cf.\ \cite[Lemma~15.2.8]{MT}). \qedhere
\end{enumerate}
\end{remark}

The constant function $V\equiv \infty$ always is a Lyapunov function, though one of little use. 
Of greater interest is the existence of Lyapunov functions that are finite on prescribed parts of the space, or even finite everywhere. 
\begin{definition}\label{def:CH}
We say that a subset $A\subset X$ is \emph{Lyapunov small} for a random walk on $X$ given by $\mu$ if the random walk admits a Lyapunov function $V_A\colon X\to[0,\infty]$ that is bounded on $A$. 
We say the random walk satisfies the \emph{contraction hypothesis} if every compact subset $K\subset X$ is Lyapunov small. 
\end{definition}

Constructions of Lyapunov functions on quotients of semisimple Lie groups were given by Eskin--Margulis~\cite{EM} and Benoist--Quint~\cite{BQ_rec}. 
We record the consequences for spread out random walks in the example below. 
Recall that a measure $\mu$ on a Lie group $G$ with Lie algebra $\g$ is said to have \emph{finite exponential moments in the adjoint representation} if for sufficiently small $\delta>0$ 
\begin{align*}
\int_G\norm{\Ad(g)}_{\mathrm{op}}^\delta\dd\mu(g)<\infty,
\end{align*}
where $\norm{\cdot}_{\mathrm{op}}$ denotes an operator norm on $\Aut(\g)$. 
\begin{example}\label{ex:lyapunov_existence}\leavevmode
\begin{enumerate}
\item (\cite{BQ_rec}) Let $G$ be a real Lie group and $\mu$ an adapted spread out probability measure on $G$ with finite exponential moments in the adjoint representation. 
Suppose that the Zariski closure of $\Ad(G)$ in $\Aut(\g)$ is Zariski connected and semisimple. 
Then the random walk on $X=G/\Gamma$ given by $\mu$ satisfies the contraction hypothesis. 
Using the setup in \cite[Section~7]{BQ_rec}, a similar statement can also be made about $p$-adic Lie groups. 
\item (\cite{EM}) Let $G=\mathbb{G}(\R)$ be the group of real points of a Zariski connected semisimple algebraic group $\mathbb{G}$ defined over $\R$ such that $G$ has no compact factors and $\mu$ a spread out probability measure on $G$ with finite exponential moments in the adjoint representation. 
Then the random walk on $X=G/\Gamma$ admits a continuous and everywhere finite Lyapunov function. \qedhere
\end{enumerate}
\end{example}
%
Equipped with these concepts, we can now explain how Lyapunov functions make mixing of spread out random walks effective. 
For the sake of simplicity, we only state the result in the adapted and aperiodic case. 
We have repeatedly seen that the former is no restriction, and the corresponding statements in the periodic case can be obtained by employing similar reductions as in the proof of Theorem~\ref{thm:mixing}. 
\begin{theorem}\label{thm:eff_mixing}
Suppose that $\Gamma\subset G$ is a lattice. 
Let $\mu$ be an adapted spread out probability measure on $G$ such that the random walk on $X$ given by $\mu$ is aperiodic. 
\begin{enumerate}
\item For every Lyapunov small subset $A\subset X$ there is a constant $\kappa=\kappa(A)>0$ such that for every $n\in\N$ 
\begin{align*}
\sup_{x\in A}\norm*{\mu^{*n}*\delta_x-m_X}\ll_A \mathrm{e}^{-\kappa n}.
\end{align*}
In particular, this holds for all compact subsets $A=K$ of $X$ if the random walk satisfies the contraction hypothesis. 
\item If the random walk admits an everywhere finite Lyapunov function $V\ge 1$, then there is a constant $\kappa>0$ such that for all $n\in\N$ 
\begin{align*}
\sup_{x\in X}\frac1{V(x)}\norm*{\mu^{*n}*\delta_x-m_X}_V\ll \mathrm{e}^{-\kappa n}.
\end{align*}
\end{enumerate}
\end{theorem}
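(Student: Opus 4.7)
The plan is to apply Meyn--Tweedie's $V$-uniform geometric ergodicity theorem (\cite[Theorems~15.0.1 and~16.0.1]{MT}): a $\psi$-irreducible aperiodic Markov chain satisfying a drift condition $PV \le \alpha V + \beta\mathds{1}_C$ towards a petite set $C$, with $V\ge 1$, admits constants $R<\infty$ and $\rho\in(0,1)$ with
\[
\norm{P^n(x,\cdot) - m_X}_V \le R\,V(x)\,\rho^n
\]
for every $x$ with $V(x)<\infty$, where $m_X$ is the unique invariant probability measure.

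First, I verify that the structural hypotheses are in place. Since $\Gamma\subset G$ is a lattice and $\mu$ is adapted and spread out, Proposition~\ref{prop:spread_out_irr} gives $\psi$-irreducibility with $\psi\sim[m_X]$, and Proposition~\ref{prop:harris_rec} positive Harris recurrence with unique invariant probability $m_X$. Aperiodicity is assumed, and by Proposition~\ref{prop:T_equivalence} the chain is a $T$-chain, so all compact subsets of $X$ are petite (\cite[Theorem~6.2.5]{MT}). By Remark~\ref{rmk:lyapunov}(iii), any given Lyapunov function $V$ can be put in the form $PV\le \alpha' V + \beta'\mathds{1}_K$ with $K\subset X$ compact and hence petite, so the geometric ergodicity theorem applies whenever such a $V$ is available.

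Part~(ii) then follows immediately: the assumed everywhere-finite Lyapunov function $V\ge 1$ (if necessary shifted using Remark~\ref{rmk:lyapunov}(i)) feeds directly into the theorem, yielding $\norm{\mu^{*n}*\delta_x - m_X}_V \le R\,V(x)\,\rho^n$ for all $x\in X$, and dividing by $V(x)$ gives the claim with $\kappa = -\log\rho$. For part~(i), the Lyapunov function $V_A$ supplied by Lyapunov smallness is bounded on $A$ by some $M_A<\infty$ but possibly $+\infty$ elsewhere. Applied at points $x\in A$ where $V_A(x)\le M_A$, the ergodicity bound combined with $\norm{\cdot}\le \norm{\cdot}_{V_A}$ yields
\[
\sup_{x\in A}\norm{\mu^{*n}*\delta_x - m_X}\le R\,M_A\,\rho^n,
\]
which is precisely~(i).

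The one technical wrinkle is that~\cite{MT} usually formulates the result for a Lyapunov function $V$ that is everywhere finite, whereas $V_A$ in Definition~\ref{def:CH} is only required to be bounded on $A$. I expect this to be the main obstacle, but a conceptually mild one: iterating the drift gives $P^n V_A \le \alpha^n V_A + \beta/(1-\alpha)$, so $\set{V_A<\infty}$ is absorbing and, in view of $\psi$-irreducibility with $\psi\sim[m_X]$, of full $[m_X]$-measure. One may therefore restrict the chain to $\set{V_A<\infty}$ without altering the limiting measure $m_X$ or the starting points of interest in $A$, and apply the theorem there; alternatively, one truncates $V_A$ at a large level and adjusts the drift constants. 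Either device is routine, and all the substantive Markov-theoretic input has already been supplied by the qualitative analysis of Section~\ref{sec:spread_out}.
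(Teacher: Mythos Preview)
Your proposal is correct and follows essentially the same route as the paper: verify positive Harris recurrence, aperiodicity, and the $T$-chain property from \parsign\ref{sec:spread_out}, use Remark~\ref{rmk:lyapunov} to put the drift condition over a compact (hence petite) set, and then invoke Meyn--Tweedie's geometric ergodicity theorems (\cite[Theorem~15.0.1]{MT} for (i), \cite[Theorem~16.1.2]{MT} for (ii)). Your one worry---that $V_A$ may take the value $+\infty$ off $A$---is in fact a non-issue: the drift condition (V4) in \cite{MT} is formulated for extended-real-valued $V$, and \cite[Theorem~15.0.1]{MT} already gives the bound $\norm{P^n(x,\cdot)-m_X}_{V_A}\le R\,V_A(x)\,\rho^n$ at every point where $V_A(x)<\infty$, so no restriction or truncation is needed.
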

\begin{proof}
As in the proof of Theorem~\ref{thm:mixing} we see that the random walk on $X$ is positive Harris recurrent with unique invariant probability $m_X$. 
To establish (i), recall that Remark~\ref{rmk:lyapunov} allows us to assume that $V_A$ is bounded below by $1$ and that $PV_A\le \alpha V_A+\beta\mathds{1}_L$ for some compact $L\subset X$. 
Since compact sets are petite for $T$-chains (\cite[Theorem~6.2.5]{MT}, see \cite[p.~117]{MT} for the definition of petite sets), $V_A$ satisfies the condition in (iii) of \cite[Theorem~15.0.1]{MT}. 
Since $V_A$ is bounded on $A$, the claim follows from the last statement of that theorem. 
With the same arguments, (ii) follows from \cite[Theorem~16.1.2]{MT}. 
\end{proof}
On compact spaces, one may always choose $V=1$ as Lyapunov function. 
This immediately gives the following corollary. 
\begin{corollary}\label{cor:compact}
Suppose in addition to the assumptions in Theorem~\textup{\ref{thm:eff_mixing}} that $X$ is compact. 
Then there exists $\kappa>0$ such that for all $n\in\N$ 
\begin{equation*}
\pushQED{\qed}
\sup_{x\in X}\norm{\mu^{*n}*\delta_x-m_X}\ll \mathrm{e}^{-\kappa n}.\qedhere
\popQED
\end{equation*}
\end{corollary}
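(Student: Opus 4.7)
The approach is essentially the one telegraphed in the sentence immediately preceding the corollary: on a compact space, the constant function $V\equiv 1$ qualifies as a Lyapunov function, and feeding this into part (ii) of Theorem \ref{thm:eff_mixing} reduces the $V$-norm bound to the ordinary total variation bound.

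Concretely, the plan is first to verify that $V\equiv 1$ satisfies the requirements of Definition \ref{def:CH}. Properness is automatic since $V^{-1}([0,R])$ is either empty or all of $X$, and $X$ is compact. The contraction inequality $PV\le \alpha V+\beta$ is equally trivial: $PV= 1$ since $P$ is a stochastic kernel, so for instance $\alpha=\tfrac12$ and $\beta=\tfrac12$ works, and of course $V\ge 1$ and $V$ is everywhere finite. Hence the hypothesis of Theorem \ref{thm:eff_mixing}(ii) is met.

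Next, apply Theorem \ref{thm:eff_mixing}(ii) to this $V$. It yields a constant $\kappa>0$ such that
\begin{align*}
\sup_{x\in X}\frac{1}{V(x)}\norm{\mu^{*n}*\delta_x-m_X}_V\ll \mathrm{e}^{-\kappa n}.
\end{align*}
Since $V\equiv 1$, the weighted norm $\norm{\cdot}_V$ coincides with the total variation norm $\norm{\cdot}$ (by the very definition of $\norm{\cdot}_V$, the admissible test functions $|f|\le V$ are exactly the functions bounded by $1$), and the prefactor $1/V(x)$ equals $1$. Substituting these identifications gives the claimed bound
\begin{align*}
\sup_{x\in X}\norm{\mu^{*n}*\delta_x-m_X}\ll \mathrm{e}^{-\kappa n}.
\end{align*}

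There is really no obstacle here; the work has already been done in Theorem \ref{thm:eff_mixing}. The only thing to double-check is that the appeal to Theorem \ref{thm:eff_mixing}(ii) is legitimate, i.e., that its standing hypotheses (namely that $\mu$ is adapted spread out, $\Gamma$ is a lattice, and the induced random walk is aperiodic) are indeed inherited verbatim from the assumption of the corollary, which they are by construction.
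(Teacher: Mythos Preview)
Your proof is correct and is exactly the paper's argument: take $V\equiv 1$ as an everywhere finite Lyapunov function (properness holding trivially by compactness of $X$) and read off the total variation bound from Theorem~\ref{thm:eff_mixing}(ii), using that $\norm{\cdot}_1=\norm{\cdot}$.
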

\begin{proof}[Proof of Theorem~\textup{\ref{thm:main_thm}}]
That $\G x_0$ is clopen was part of Lemma~\ref{lem:adapted}. 
In view of Proposition~\ref{prop:aperiodicity}, \eqref{equidist} follows from Theorem~\ref{thm:mixing}. 

To obtain the statement about effective mixing, first ensure that the Lyapunov function is bounded below by $1$ using Remark~\ref{rmk:lyapunov}(i) and then apply Theorem~\ref{thm:eff_mixing}(ii) to each of the finitely many $\G$-orbits intersecting the compact set $K$. 
The conclusion follows, since $\norm{\cdot}\le \norm{\cdot}_V$ and $V$ is bounded on $K$ by the assumed continuity. 

The final remark about existence of Lyapunov functions is Example~\ref{ex:lyapunov_existence}(ii). 
\end{proof}
\begin{proof}[Proof of Corollary~\textup{\ref{cor:conn}}]
In both cases of the corollary
\begin{itemize}
\item $\mu$ is aperiodic on $X$ by Proposition~\ref{prop:aperiodicity}, and
\item we have $\G x_0=X$ for all $x_0\in X$, using Corollary~\ref{cor:full_orbit} or adaptedness of $\mu$, respectively. 
\end{itemize}
The statement now follows from Theorem~\ref{thm:mixing}. 
\end{proof}
\subsection{Small Sets and Mixing Rates}\label{subsec:rate}
Given the existence of Lyapunov functions or compactness of the state space, we know from~\S\ref{subsec:eff_mix} that the convergence 
\begin{align*}
\mu^{*n}*\delta_x\overset{n\to\infty}{\longrightarrow} m_X
\end{align*}
happens with exponential speed. 
As long as the value of the exponent and the implicitly appearing constants are unknown, this does not yet give any information about the actual variation distance between $\mu^{*n}*\delta_x$ and $m_X$ for any given $n\in\N$. 
In this subsection, we will address this issue. 
The crucial concept is the following. 
\begin{definition}
Let $P$ be a transition kernel on $X$. 
A set $A\subset X$ is called \emph{$(n,\epsilon)$-small} for an integer $n\in\N$ and $\epsilon>0$ if there exists a probability measure $\lambda$ on $X$ such that 
\begin{align*}
P^n(x,\cdot)\ge \epsilon\lambda
\end{align*}
for all $x\in A$. 
If $A$ is $(n,\epsilon)$-small for some $n\in\N$ and $\epsilon>0$, $A$ is called \emph{small}. 
\end{definition}
Small sets are in fact one of the central notions on which the whole theory of general state space Markov chains is built. 
Their significance lies in the fact that they provide the Markov chain with a regenerative structure: 
After each return to $A$, there is a positive probability of taking the next step according to the fixed measure $\lambda$. 
This structure also plays an important role when trying to establish bounds on the speed of convergence. 
The simplest result in this direction assumes that the whole state space is small, which is known as the \emph{Doeblin condition}. 
\begin{theorem}[{\cite[Theorem~16.2.4]{MT}}]\label{thm:doeblin}
Suppose the whole state space $X$ is $(n_0,\epsilon)$-small for a Markov chain with transition kernel $P$ and invariant probability $\pi$. 
Then for all $n\in\N$ and any starting distribution $\nu$ on $X$ we have 
\begin{align*}
\norm{\nu P^n-\pi}\le 2(1-\epsilon)^{\lfloor n/n_0\rfloor}.
\end{align*}
\end{theorem}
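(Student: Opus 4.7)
The plan is to carry out the classical minorization-coupling argument for Doeblin's condition. The hypothesis gives a probability measure $\lambda$ on $X$ with $P^{n_0}(x,\cdot)\ge \epsilon\lambda$ for every $x\in X$. First I would use this to write a convex decomposition
\begin{align*}
P^{n_0}(x,\cdot)=\epsilon\lambda+(1-\epsilon)Q(x,\cdot),
\end{align*}
where $Q(x,\cdot)\coloneqq(1-\epsilon)^{-1}(P^{n_0}(x,\cdot)-\epsilon\lambda)$ is easily checked to be a stochastic transition kernel. This is the main tool, since it expresses a full step of length $n_0$ as \emph{flipping a coin}: with probability $\epsilon$, we replace the position by a sample from the fixed distribution $\lambda$ (independent of where we came from), and with probability $1-\epsilon$ we move according to $Q$.

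The next step is to translate this into a contraction estimate for the total variation norm on the subspace of zero-mass signed measures. Given a finite signed measure $\sigma$ on $X$ with $\sigma(X)=0$, the Hahn decomposition gives $\sigma=\sigma^+-\sigma^-$ with $\sigma^+(X)=\sigma^-(X)$. Computing with the decomposition above,
\begin{align*}
\sigma P^{n_0}(A)&=\epsilon\lambda(A)\sigma(X)+(1-\epsilon)\sigma Q(A)=(1-\epsilon)\sigma Q(A),
\end{align*}
so the deterministic $\epsilon\lambda$ part cancels precisely because $\sigma$ has zero total mass. Since $Q$ is stochastic it does not increase the total variation norm (this is the standard fact that Markov kernels are TV contractions), and therefore
\begin{align*}
\norm{\sigma P^{n_0}}\le (1-\epsilon)\norm{\sigma}.
\end{align*}

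Now I would apply this to $\sigma=\nu-\pi$, which has $\sigma(X)=0$, and observe that $\pi P=\pi$ implies $\pi P^n=\pi$ for all $n$, so $\sigma P^n=\nu P^n-\pi$. Iterating the contraction $k$ times yields
\begin{align*}
\norm{\nu P^{kn_0}-\pi}\le (1-\epsilon)^k\norm{\nu-\pi}\le 2(1-\epsilon)^k,
\end{align*}
where we used $\norm{\nu_1-\nu_2}\le 2$ for any two probability measures. Finally, writing an arbitrary $n\in\N$ as $n=kn_0+r$ with $k=\lfloor n/n_0\rfloor$ and $0\le r<n_0$, and applying the TV-contractivity of $P^r$ to the zero-mass measure $\nu P^{kn_0}-\pi$, we obtain $\norm{\nu P^n-\pi}\le 2(1-\epsilon)^{\lfloor n/n_0\rfloor}$, as claimed. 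No step strikes me as a real obstacle; the only place to be slightly careful is the cancellation computation using $\sigma(X)=0$, which is what makes the Doeblin minorization contract rather than merely bound.
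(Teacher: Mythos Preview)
Your argument is correct and is the standard minorization--contraction proof of Doeblin's theorem. Note, however, that the paper does not supply its own proof of this statement: Theorem~\ref{thm:doeblin} is simply quoted from \cite[Theorem~16.2.4]{MT}, so there is nothing in the paper to compare against. What you have written is essentially the argument one finds in Meyn--Tweedie (phrased there via the coupling interpretation rather than the zero-mass contraction computation, but these are two sides of the same coin). One tiny edge case worth a parenthetical remark: when $\epsilon=1$ the kernel $Q$ is undefined, but then $P^{n_0}(x,\cdot)=\lambda$ for all $x$ and the conclusion is trivial.
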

When the state space is not small, a rate of convergence as simple as above may not be available. 
We shall use the following result due to Rosenthal~\cite{R}. 
\begin{theorem}[{\cite[Theorem~5]{R}}]\label{thm:rate}
Given a transition kernel $P$ on $X$, denote the product kernel by $\overline{P}((x,y),\cdot)=P(x,\cdot)\otimes P(y,\cdot)$. 
Let $\pi$ be a $P$-invariant probability measure on $X$. 
Suppose there exists an $(n_0,\epsilon)$-small set $A$ and a measurable function $h\ge 1$ on $X\times X$ together with a constant $\overline{\alpha}<1$ such that 
\begin{align*}
\overline{P}h(x,y)\le \overline{\alpha} h(x,y)
\end{align*}
for all $(x,y)\notin A\times A$. 
Then, with $R=\sup_{(x,y)\in A\times A}\overline{P}^{n_0}h(x,y)$, we have for all $j,n\in\N$ and any starting distribution $\nu$ on $X$ 
\begin{align*}
\norm{\nu P^n-\pi}\le 2(1-\epsilon)^{\lfloor j/n_0\rfloor}+2\overline{\alpha}^{n-jn_0+1}R^{j-1}\int_{X\times X}h\dd(\nu\otimes\pi).
\end{align*} 
\end{theorem}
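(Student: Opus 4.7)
The plan is to prove this bound by a coupling argument combined with drift control. First I would construct a Markov chain $(\Phi_n,\Psi_n)$ on $X\times X$ whose marginals are Markov chains with kernel $P$ and initial laws $\nu$ and $\pi$ respectively, so that the classical coupling inequality gives
\begin{align*}
\norm{\nu P^n-\pi}\le 2\Pr[T>n],
\end{align*}
where $T$ is the coupling time of the joint process. Outside $A\times A$ the pair evolves under the product kernel $\overline{P}$. Whenever the pair enters $A\times A$, I would use the $(n_0,\epsilon)$-small set property: write $P^{n_0}(x,\cdot)=\epsilon\lambda+(1-\epsilon)R(x,\cdot)$ for $x\in A$ and, with probability $\epsilon$, realize the next $n_0$-step block by drawing a common point from $\lambda$ for both chains (declaring $T$ to have occurred), and otherwise evolve the next $n_0$ steps according to the residual kernels $R(x,\cdot)$. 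From time $T$ onward both chains move together.

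Next, I would bound $\Pr[T>n]$ by decomposing according to the count $N_n$ of successive $n_0$-spaced visits of the pair to $A\times A$ up to time $n$. On the event $\{N_n\ge j/n_0\}$, at least $\lfloor j/n_0\rfloor$ independent $\epsilon$-coupling attempts have taken place and all have failed, giving the first term $(1-\epsilon)^{\lfloor j/n_0\rfloor}$. On the complementary event $\{N_n<j/n_0\}$, the pair has spent most of its time outside $A\times A$, and the drift bound should control its probability.

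For the drift part, the key is to build a nonnegative supermartingale from $h$. Let $\sigma_1<\sigma_2<\dots$ be the successive epochs at which the pair lies in $A\times A$ (spaced by at least $n_0$), and let $N_n$ count how many of these have occurred by time $n$. The hypothesis $\overline{P}h\le\overline{\alpha}h$ off $A\times A$ together with $\overline{P}^{n_0}h\le R$ on $A\times A$ shows that the process
\begin{align*}
M_n\coloneqq \overline{\alpha}^{-(n-n_0 N_n)}R^{-N_n}\,h(\Phi_n,\Psi_n)
\end{align*}
is a nonnegative supermartingale with respect to the natural filtration. Taking expectations and applying Markov's inequality, together with $h\ge 1$, yields
\begin{align*}
\Pr[\sigma_j>n]\le \overline{\alpha}^{n-jn_0+1}R^{j-1}\int_{X\times X}h\dd(\nu\otimes\pi),
\end{align*}
which is the second term. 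Summing the two estimates and multiplying by $2$ from the coupling inequality produces the claimed bound.

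The main obstacle I anticipate is the bookkeeping required to show that $M_n$ is genuinely a supermartingale across the $n_0$-step blocks around visits to $A\times A$: outside those blocks the factor $\overline{\alpha}^{-(n-n_0 N_n)}$ exactly neutralizes the pointwise drift $\overline{\alpha}$, while at each $n_0$-step block within $A\times A$ one pays a factor of $R$ but freezes the $\overline{\alpha}$ counter, and this needs to be done so that the exponents $n-jn_0+1$ and $R^{j-1}$ come out precisely as stated. A related subtlety is arranging the residual kernels so that the marginal distributions remain correct whether or not coupling succeeds, which is standard but must be done carefully so that the drift inequality continues to apply to the marginal process $\Phi_n$ and $\Psi_n$.
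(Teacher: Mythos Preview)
The paper does not prove this statement: it is quoted verbatim as \cite[Theorem~5]{R} (Rosenthal) and used as a black box, so there is no in-paper proof to compare your proposal against. Your outline is, in fact, a faithful sketch of Rosenthal's own coupling argument: run the two chains together via the product kernel, attempt Nummelin-style regeneration with success probability $\epsilon$ each time the pair sits in $A\times A$, split $\Pr[T>n]$ according to whether the pair has visited $A\times A$ often or not, and control the ``few visits'' event via the drift supermartingale built from $h$.

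One point deserves tightening. The bound $R=\sup_{A\times A}\overline{P}^{n_0}h$ refers to the \emph{full} product kernel $\overline{P}^{n_0}$, not to the residual kernel $(1-\epsilon)^{-2}\bigl(P^{n_0}-\epsilon\lambda\bigr)^{\otimes 2}$ that actually governs the pair after a failed coupling attempt. If you run the supermartingale $M_n$ along the coupled process with residuals, the factor you incur at each $n_0$-block is not $R$ but something larger. The standard fix (and what Rosenthal does) is to separate the two analyses: the drift/supermartingale bound for $\Pr[\sigma_j>n]$ is carried out on the \emph{unforced} product chain governed by $\overline{P}$ (no coupling attempts), where your $M_n$ is genuinely a supermartingale with the stated constants; the coupling-failure bound $(1-\epsilon)^{\lfloor j/n_0\rfloor}$ is handled separately, since conditional on the visit times the coupling attempts are i.i.d.\ Bernoulli$(\epsilon)$. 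You hint at this in your final paragraph, but as written your construction applies the drift to the coupled process, which would not give the clean $R^{j-1}$ factor.
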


In order to apply these theorems, we see that it is important to identify small sets for spread out random walks. 
From a qualitative point of view, this task is not too difficult. 
\begin{proposition}\label{prop:compact_small}
Let $\mu$ be a probability measure on $G$. 
Suppose that the random walk on $X=G/\Gamma$ is $\psi$-irreducible and aperiodic. 
Then every compact subset $K\subset X$ is small. 
\end{proposition}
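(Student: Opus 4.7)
The plan is to combine three ingredients already in place in the article: the $T$-chain structure of spread out random walks, petiteness of compact sets for $\psi$-irreducible $T$-chains, and the equivalence of petite and small sets under aperiodicity.

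First I would invoke Proposition \ref{prop:T_equivalence}: since $\mu$ is spread out, the random walk on $X$ is a $T$-chain, with a continuous component $T$ of some power $P^{n_0}$ for which $T(\cdot, X)$ is a strictly positive constant and $Tf$ is continuous for every bounded measurable $f$. Combined with the assumed $\psi$-irreducibility, \cite[Theorem~6.2.5]{MT} --- the same result already used in the proof of Theorem \ref{thm:eff_mixing} --- yields that every compact subset $K \subset X$ is \emph{petite}: there exist a sampling distribution $a$ on $\N_0$, a constant $\epsilon > 0$, and a probability measure $\lambda$ on $X$ such that $K_a(x, \cdot) \ge \epsilon \lambda$ for all $x \in K$.

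The remaining task is to upgrade petite to small, i.e.\ to replace the averaged kernel $K_a = \sum_n a(n) P^n$ by a single power $P^n$. Aperiodicity is precisely the ingredient needed for this step: by \cite[Theorem~5.5.7]{MT}, in any $\psi$-irreducible aperiodic Markov chain, every petite set is already small. Applying this equivalence to the petite set $K$ produces the required $n \in \N$, $\epsilon' > 0$, and probability measure $\lambda'$ with $P^n(x, \cdot) \ge \epsilon' \lambda'$ for all $x \in K$, concluding the proof.

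Essentially no new ideas beyond the machinery collected in \parsign\ref{subsec:T_chains} are needed; the proposition is a short application of abstract Markov chain theory to our concrete setting. The only real obstacle is the bookkeeping of confirming that the cited Meyn--Tweedie statements are in the precise form used here, since conventions on small and petite sets (for instance whether $\lambda$ is required to be a probability rather than a subprobability measure) vary slightly across the literature; in our $\psi$-irreducible aperiodic setting these variations are easily reconciled.
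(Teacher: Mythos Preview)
Your proposal is correct and follows exactly the paper's own proof: invoke Proposition~\ref{prop:T_equivalence} to get the $T$-chain property, apply \cite[Theorem~6.2.5]{MT} to conclude compact sets are petite, then use aperiodicity and \cite[Theorem~5.5.7]{MT} to upgrade petite to small. The paper's version is just the three-sentence compression of what you wrote.
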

\begin{proof}
Combining Propositions~\ref{prop:spread_out_irr} and~\ref{prop:T_equivalence} we see that the random walk on $X$ is a $T$-chain. 
Compact sets are thus petite by \cite[Theorem~6.2.5]{MT}. 
By aperiodicity and \cite[Theorem~5.5.7]{MT}, they are also small. 
\end{proof}
In the case of a compact state space, we therefore immediately get the following. 
\begin{theorem}\label{thm:compact_rate}
Let $\mu$ be adapted and spread out. 
Suppose that $X$ is compact and that the random walk on $X$ given by $\mu$ is aperiodic. 
Then $X$ is $(n_0,\epsilon)$-small for some $n_0\in\N$ and $\epsilon>0$ and for any starting distribution $\nu$ on $X$ we have 
\begin{align*}
\norm{\mu^{*n}*\nu-m_X}\le 2(1-\epsilon)^{\lfloor n/n_0\rfloor}
\end{align*}
for every $n\in\N$. 
\end{theorem}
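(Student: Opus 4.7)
The plan is to simply combine the qualitative smallness statement of Proposition~\ref{prop:compact_small} with the quantitative Doeblin bound of Theorem~\ref{thm:doeblin}; there is essentially no real obstacle once the hypotheses of both results are verified in the present setting.

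First I would check that the induced random walk is $\psi$-irreducible, so that Proposition~\ref{prop:compact_small} is applicable. Since $X=G/\Gamma$ is compact, $\Gamma$ is automatically a cocompact lattice and $X$ carries the Haar probability measure $m_X$, which is $P$-invariant and in particular $[m_X]$ is subinvariant. Combined with the assumptions that $\mu$ is adapted and spread out, Proposition~\ref{prop:spread_out_irr} yields $\psi$-irreducibility with $[m_X]\ll\psi$. The aperiodicity hypothesis is given, so all hypotheses of Proposition~\ref{prop:compact_small} are met.

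Next, applying Proposition~\ref{prop:compact_small} to the compact set $K=X$ itself produces $n_0\in\N$ and $\epsilon>0$ such that $X$ is $(n_0,\epsilon)$-small, which is precisely the hypothesis of the Doeblin theorem (Theorem~\ref{thm:doeblin}). Since $m_X$ is the invariant probability measure of the chain, Theorem~\ref{thm:doeblin} immediately yields
\begin{align*}
\norm{\nu P^n-m_X}\le 2(1-\epsilon)^{\lfloor n/n_0\rfloor}
\end{align*}
for every starting distribution $\nu$ on $X$ and every $n\in\N$. Translating the Markov chain notation back into the random walk setting via $\nu P^n=\mu^{*n}*\nu$ (cf.\ Example~\ref{ex:RW}) gives exactly the claimed bound.

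There is no real technical obstacle here; the content of the result is already packaged in Proposition~\ref{prop:compact_small} (which relies on the $T$-chain structure from Proposition~\ref{prop:T_equivalence} together with aperiodicity) and in Theorem~\ref{thm:doeblin}. The only mild point of care is the verification that a compact quotient $G/\Gamma$ indeed allows us to invoke the results of Section~\ref{sec:spread_out} (i.e.\ that $\Gamma$ is a lattice and $m_X$ is an invariant probability), which is immediate.
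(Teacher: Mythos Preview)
Your proof is correct and follows exactly the same route as the paper: verify $\psi$-irreducibility via Proposition~\ref{prop:spread_out_irr}, apply Proposition~\ref{prop:compact_small} to the compact set $K=X$ to obtain the Doeblin condition, and conclude with Theorem~\ref{thm:doeblin}. The paper's own proof is just a terser rendition of the same three steps.
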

\begin{proof}
Note that the random walk is $\psi$-irreducible by Corollary~\ref{cor:fin_vol_irred}. 
Then $X$ is small by Proposition~\ref{prop:compact_small} and Theorem~\ref{thm:doeblin} gives the result. 
\end{proof}
Unfortunately, so far we still have no information about the value of $n_0$ and $\epsilon$. 
We shall now outline a hands-on approach to find them. 
The idea is the following: 
Denote by $f_n\colon G\to[0,\infty)$ the density of the part of $\mu^{*n}$ absolutely continuous with respect to a right Haar measure on $G$ and endow $X$ with the quasi-invariant measure $m_{X,\Delta}$ coming from the modular character $\Delta$ of $G$. 
Then the probability of going from $x\in X$ to $y\in X$ in $n$ steps (using only the continuous part) is represented by the quantity 
\begin{align}\label{rhs_density}
\sum_{\gamma\in\Gamma}f_n(y\gamma x^{-1}),
\end{align} 
which is a function on $X\times X$. 
Note that for fixed $x$, the sum is finite for a.e.\ $y$, since $f_n$ is integrable. 
Hence, the minorization condition in the definition of small sets is certainly satisfied on $A$ for the right-hand side 
\begin{align}\label{rhs_measure}
\inf_{x\in A}\sum_{\gamma\in\Gamma}f_n(y\gamma x^{-1})\dd m_{X,\Delta}(y),
\end{align}
which can be thought of as the lower envelope of the shifts by elements of $A$ of the density \eqref{rhs_density}. 
The remaining question is whether this measure is non-trivial. 
Intuitively, the spread out assumption should guarantee this for large $n$, at least when the shifting set $A$ is not too large, say compact. 
That this is indeed true is the content of the next lemma. 
\begin{lemma}\label{lem:small}
Let $\mu$ be spread out and $\Gamma$-adapted. 
Suppose that the induced random walk on $X$ is aperiodic. 
Then for every compact subset $K\subset X$ there exists an integer $n_0\in\N$ such that the measure \eqref{rhs_measure} has positive mass $\epsilon>0$. 
In particular, $K$ is $(n_0,\epsilon)$-small. 
\end{lemma}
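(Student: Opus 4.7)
The plan is to reduce the lemma to finding one point $y_{0}\in X$ and one integer $n_{0}\in\N$ such that $F_{n_{0}}(y_{0},x)>0$ for \emph{every} $x\in K$ simultaneously. Once this is achieved, a continuity argument together with compactness of $K$ will yield an open neighborhood $U$ of $y_{0}$ on which $\inf_{x\in K}F_{n_{0}}(y,x)\geq c>0$, whence \eqref{rhs_measure} has total mass at least $\epsilon\coloneqq c\,m_{X,\Delta}(U)>0$ and witnesses $K$ as an $(n_{0},\epsilon)$-small set via the minorization $P^{n_{0}}(x,\cdot)\geq \epsilon\lambda$ with $\lambda$ the normalization of \eqref{rhs_measure}.

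To enable the continuity argument I would first replace $f_{n_{1}}$ by a continuous compactly supported lower bound. Starting from the proof of Proposition~\ref{prop:T_equivalence}, spread-outness yields an integer $n_{1}$ and a non-negative integrable $p_{1}$ on $G$ with $p_{1}\,\dd m_{G}\leq \dd\mu^{*n_{1}}$ and $\int p_{1}\,\dd m_{G}>0$. A standard $L^{1}$ truncation followed by one extra $\mu$-convolution upgrades $p_{1}$ to a continuous compactly supported non-negative $p\leq f_{n_{1}}$ (after possibly doubling $n_{1}$). Convolving with further steps of $\mu$ and truncating to keep compact support yields, for every $n\geq n_{1}$, a continuous compactly supported $q_{n}\leq f_{n}$, and then $G_{n}(y,x)\coloneqq\sum_{\gamma\in\Gamma}q_{n}(y\gamma x^{-1})$ is a continuous function on $X\times X$ bounded above by $F_{n}(y,x)$; its continuity reduces the pointwise task above to a single point $y_{0}$.

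The heart of the argument is the search for $y_{0}$. By Proposition~\ref{prop:semigroup_transitive} and $(G,\Gamma)$-adaptedness, the semigroup $S=\bigcup_{n\geq 1}\supp(\mu^{*n})$ acts transitively on $X$, so after fixing any $y_{0}\in X$ we can select for each $x\in K$ an element $s_{x}\in\supp(\mu^{*n(x)})$ with $s_{x}x=y_{0}$. Compactness of $K$ reduces to finitely many base points $x_{1},\ldots,x_{k}\in K$ with associated $s_{i}$, and it then suffices to find a single $n_{0}$ and a continuous density $q_{n_{0}}\leq f_{n_{0}}$ with $q_{n_{0}}(s_{i})>0$ for every $i$. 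The hard part is equalizing the step counts $n(x_{i})$ to a common $n_{0}$, and here aperiodicity enters: for an aperiodic $\psi$-irreducible $T$-chain one has $P^{n}(x,V)>0$ for every nonempty open $V\subset X$ and all $n$ sufficiently large, with uniformity over the finite set $\{x_{1},\ldots,x_{k}\}$ provided by lower semicontinuity of $x\mapsto P^{n}(x,V)$ for $T$-chains. Convolving the continuous density $p$ with the extra $\mu^{*(n_{0}-n_{1})}$-steps centered correctly produces the desired $q_{n_{0}}$ with $q_{n_{0}}(s_{i})>0$ for all $i$, so $G_{n_{0}}(y_{0},x)\geq q_{n_{0}}(s_{i})>0$ for every $x$ in a neighborhood of the corresponding $x_{i}$; these neighborhoods cover $K$, and continuity of $G_{n_{0}}$ propagates positivity to a neighborhood of $y_{0}$, closing the proof. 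I expect this aperiodic equalization step to be the main technical obstacle; the continuous-density construction and the reduction to a single $y_{0}$ are routine.
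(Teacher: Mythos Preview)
Your approach is genuinely different from the paper's, and while the broad strategy is sound, there is a real gap at exactly the place you flagged as the main obstacle.

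The paper does \emph{not} build continuous density minorants and chase a single target point $y_0$. Instead it proceeds top-down: first it invokes Proposition~\ref{prop:compact_small} to conclude that $K$ is already $(n_1,\epsilon_1)$-small for some probability $\lambda$, then observes $\lambda\ll[m_X]$, and finally appeals to Orey's $C$-set result (\cite[Proposition~1.2]{O}) to produce an $[m_X]$-positive set $C$ and $n_2\in\N$ with $p_{n_2}(x,y)\ge\delta$ for all $x,y\in C$. A Chapman--Kolmogorov-type inequality for the densities then yields $p_{n_1+n_2}(x,z)\ge\delta\epsilon_1\lambda(C)$ for all $x\in K$ and $z\in C$, and the mass of \eqref{rhs_measure} is bounded below by $\delta\epsilon_1\lambda(C)\,m_{X,\Delta}(C)>0$. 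The uniformity over $K$ comes for free from the minorization $P^{n_1}(x,\cdot)\ge\epsilon_1\lambda$; no aperiodic equalization is needed.

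In your scheme, the circularity between the compactness reduction and the equalization step is not resolved. You first cover $K$ by neighborhoods of finitely many $x_i$ (with associated $s_i\in\supp(\mu^{*n(x_i)})$), and only afterwards choose a common $n_0$ so that $q_{n_0}(s_i)>0$ for each $i$. But the neighborhoods on which $G_{n_0}(y_0,\cdot)>0$ depend on $q_{n_0}$, hence on $n_0$; there is no reason they still cover $K$. To repair this you would need $\inf_{x\in K}P^{m}(x,V)>0$ for some open $V$ and some $m$---and the clean way to get that is precisely Proposition~\ref{prop:compact_small}, at which point your argument starts to converge to the paper's. A second, smaller imprecision: the assertion $q_{n_0}(s_i)>0$ is a statement on $G$, whereas the aperiodicity you invoke gives $P^n(x_i,V)>0$ on $X$; because $G\to X$ is not injective, this only yields $G_{n_0}(y_0,x_i)>0$ (some $\gamma$-translate of $s_i$ works), which is what you actually need, but not the literal claim you wrote.
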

\begin{proof}
By Proposition~\ref{prop:spread_out_irr} the random walk is $\psi$-irreducible with $\psi\sim[m_X]$. 
Enlarging $K$ if necessary, we may assume that $K$ is $[m_X]$-positive. 
Proposition~\ref{prop:compact_small} implies that $K$ is small. 
Choose $n_1,\epsilon_1,\lambda$ as in the definition of a small set. 
From \cite[Proposition~5.5.4(ii)]{MT} it then follows that $\lambda\ll [m_X]$. 
Hence, there exists an $[m_X]$-positive set $A$ such that $\lambda|_A$ belongs to the Haar measure class restricted to $A$. 
Let us now split the transition kernels $P^n$ into the absolutely continuous and singular parts with respect to $[m_X]$. 
As explained before the statement of the lemma, the absolutely continuous part can then be written as $T_n(x,\dd y)=p_n(x,y)\dd m_{X,\Delta}(y)$ with $p_n\colon X\times X\to [0,\infty]$ given by \eqref{rhs_density}. 
According to \cite[Proposition~1.2]{O} (cf.\ also \cite[Theorem~5.2.1]{MT} and its proof) 
\begin{itemize}
\item the densities $p_n$ can be modified on an $[m_X]$--null~set (in the $y$-coordinate) so that they satisfy 
\begin{align}\label{better_chap_kol}
p_{m+n}(x,z)\ge \int_XP^m(x,\dd y)p_n(y,z)
\end{align}
for all $x,z\in X$ and $m,n\in\N$, and
\item there exists an $[m_X]$-positive set $C\subset A$ and $n_2\in\N$ such that 
\begin{align}\label{cont_bound}
p_{n_2}(x,y)\ge \delta
\end{align}
for all $x,y\in C$ and some fixed $\delta>0$. 
\end{itemize}
By construction of $A$ we then know $\lambda(C)>0$, so that for all $x\in K$ 
\begin{align}\label{general_bound}
P^{n_1}(x,C)\ge \epsilon_1\lambda(C)>0.
\end{align}
Combining \eqref{better_chap_kol}, \eqref{cont_bound} and \eqref{general_bound}, we find for $x\in K$, $z\in C$ and $n_0=n_1+n_2$ that 
\begin{align*}
p_{n_0}(x,z)\ge \int_CP^{n_1}(x,\dd y)p_{n_2}(y,z)\ge \delta P^{n_1}(x,C)\ge \delta\epsilon_1\lambda(C).
\end{align*}
Hence, the mass of \eqref{rhs_measure} is at least 
\begin{align*}
\int_C\inf_{x\in A}p_{n_0}(x,z)\dd m_{X,\Delta}(z)\ge \delta\epsilon_1\lambda(C)m_{X,\Delta}(C)>0,
\end{align*}
which is the claim. 
\end{proof}
\begin{example}
Let us illustrate the method above by calculating a rate of convergence in a concrete instance of Example~\ref{ex:torus}. 
We set $n=2$, $a=(\begin{smallmatrix}2&1\\1&1\end{smallmatrix}),\,b=(\begin{smallmatrix}1&1\\1&2\end{smallmatrix})$, $\mu_{\mathrm{lin}}=\frac12(\delta_a+\delta_b)$, $v=e_1=(1,0)^t$, and assume that $\lambda$ has a component with a density $f$ bounded below by $\delta>0$ on $[0,1]$. 
Let us see how we need to choose $n_0$. 
We certainly cannot use $n_0=1$, since $\mu=\mu_{\mathrm{lin}}\otimes \lambda_{e_1}$ is singular with respect to Haar measure. 
If we denote the first two displacements by the random variables $D_1,D_2$, the possible two-step transformations are 
\begin{align*}
a^2\colon&x\mapsto a(ax+D_1v)+D_2v=a^2x+D_1ae_1+D_2e_1=a^2x+\begin{pmatrix}2D_1+D_2\\D_1\end{pmatrix},\\
ab\colon&x\mapsto a(bx+D_1v)+D_2v=abx+D_1ae_1+D_2e_1=abx+\begin{pmatrix}2D_1+D_2\\D_1\end{pmatrix},\\
ba\colon&x\mapsto b(ax+D_1v)+D_2v=bax+D_1be_1+D_2e_1=bax+\begin{pmatrix}D_1+D_2\\D_1\end{pmatrix},\\
b^2\colon&x\mapsto b(bx+D_1v)+D_2v=b^2x+D_1be_1+D_2e_1=b^2x+\begin{pmatrix}D_1+D_2\\D_1\end{pmatrix}.
\end{align*}
Since $D_1,D_2$ are i.i.d.\ with density $f$, the densities of the above displacements are 
\begin{align*}
g_{a^2}(s,t)=g_{ab}(s,t)=f(t)f(t-2s),\;g_{ba}(s,t)=g_{b^2}(s,t)=f(t)f(t-s)
\end{align*}
for $s,t\in\R^2$, which, by our assumption, are all bounded below by $\delta^2$ on a fundamental domain for $\mathbb{T}^2$. 
Hence, the mass of the measure \eqref{rhs_measure} for $n_0=2$ and $A=\mathbb{T}^2$ is at least $\delta^2$, so that Theorem~\ref{thm:compact_rate} produces the bound 
\begin{align*}
\norm{\mu^{*n}*\nu-m_X}\le 2(1-\delta^2)^{\lfloor n/2\rfloor}
\end{align*}
for all $n\in\N$, where $\nu$ is an arbitrary starting distribution. 
(Note that aperiodicity is guaranteed here in view of Proposition~\ref{prop:aperiodicity}.) 
\end{example}

We now turn our attention to the case of a non-compact finite-volume space $X$. 
Here we shall assume that the random walk on $X$ admits a Lyapunov function $V$ and apply Theorem~\ref{thm:rate} in a similar way as in the proof of \cite[Theorem~12]{R}. 

The set $A$ from Theorem~\ref{thm:rate} is going to be the sublevel set 
\begin{align*}
A=\set{x\in X\for V(x)\le d}
\end{align*}
for some $d>1$, and $h$ is going to be defined as
\begin{align*}
h(x,y)=1+V(x)+V(y)
\end{align*}
for $x,y\in X$. 
Note that $A$ is relatively compact since $V$ is proper and thus a small set by Proposition~\ref{prop:compact_small}. 
Let now $\alpha,\beta$ be the constants associated to the Lyapunov function $V$ and $(x,y)\notin A\times A$. 
Then $h(x,y)>1+d$, and thus we find 
\begin{align*}
\overline{P}h(x,y)&=1+PV(x)+PV(y)\\
&\le 1-\alpha+\alpha h(x,y)+2\beta\\
&\le \br*{\frac{1-\alpha+2\beta}{1+d}+\alpha}h(x,y)\\
&=\frac{1+\alpha d+2\beta}{1+d}h(x,y).
\end{align*}
Choosing $\overline{\alpha}=\frac{1+\alpha d+2\beta}{1+d}$, this will be the contraction condition in Theorem~\ref{thm:rate}. 
In order for $\overline{\alpha}$ to be less than $1$, $d$ needs to be chosen so that 
\begin{align*}
d>\frac{2\beta}{1-\alpha}.
\end{align*}
This choice of $d$ determines the set $A$. 

By iterating the Lyapunov property of $V$ and using the definition of $A$, the value of $R$ in Theorem~\ref{thm:rate} can be estimated as 
\begin{align*}
R&=1+2\sup_{x\in A}P^{n_0}V(x)\\
&\le 1+2\br*{\alpha^{n_0}\sup_{x\in A}V(x)+\beta\frac{1-\alpha^{n_0}}{1-\alpha}}\\
&\le 1+2\br*{\alpha d+\frac{\beta}{1-\alpha}}.
\end{align*}

For the integral of $h$, note first that $V$ is necessarily $m_X$-integrable by the equivalence of (i) and (iii) in \cite[Theorem~14.0.1]{MT} (use $f=(1-\alpha)V$), so that $P$-invariance of $m_X$ and the contraction property of $V$ yield $\int_X V\dd m_X\le \frac{\beta}{1-\alpha}$. 
It follows that 
\begin{align*}
\int_{X\times X}h\dd(\nu\otimes m_X)&\le 1+\int_X V\dd\nu+\int_X V\dd m_X\\
&\le 1+\int_X V\dd\nu+\frac{\beta}{1-\alpha}.
\end{align*}
Putting everything together, we arrive at the following theorem. 
\begin{theorem}
Let $\Gamma\subset G$ be a lattice and $\mu$ be an adapted spread out probability measure on $G$. 
Suppose that the random walk on $X$ given by $\mu$ is aperiodic and admits a Lyapunov function $V$ with $PV\le \alpha V+\beta$ for some $\alpha<1$, $\beta\ge 0$. 
Let $d>\frac{2\beta}{1-\alpha}$ and set $A=\set{x\in X\for V(x)\le d}$. 
Then $A$ is $(n_0,\epsilon)$-small for some $n_0\in\N$ and $\epsilon>0$ and for any starting distribution $\nu$ on $X$ with $\int_X V\dd\nu<\infty$ we have for all $j,n\in\N$ 
\begin{align*}
\norm{\mu^{*n}*\nu-m_X}\le 2(1-\epsilon)^{\lfloor j/n_0\rfloor}+2\overline{\alpha}^{n-jn_0+1}R^{j-1}\br*{1+\int_X V\dd\nu+\frac{\beta}{1-\alpha}},
\end{align*}
where $\overline{\alpha}=\frac{1+\alpha d+2\beta}{1+d}<1$ and $R=1+2\br[\big]{\alpha d+\frac{\beta}{1-\alpha}}$.\qed
\end{theorem}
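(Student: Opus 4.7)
The plan is to apply the general rate theorem of Rosenthal, Theorem \ref{thm:rate}, with the particular choices $A \coloneqq \set{V \le d}$ and test function
\[
h(x,y) \coloneqq 1 + V(x) + V(y),
\]
and to verify each of its hypotheses together with the quantitative estimates of $R$ and $\int h\,\dd(\nu\otimes m_X)$ directly from the Lyapunov inequality $PV \le \alpha V + \beta$.

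First I would check that $A$ is small. Since $V$ is proper, $A$ is relatively compact. By Proposition \ref{prop:spread_out_irr} the random walk is $\psi$-irreducible (as $\mu$ is adapted, spread out and $\Gamma$ is a lattice), and by hypothesis it is aperiodic, so Proposition \ref{prop:compact_small} (applied to the closure of $A$) furnishes integers $n_0$ and $\epsilon>0$ making $A$ an $(n_0,\epsilon)$-small set. Next I would verify the drift condition. Clearly $h \ge 1$, and for $(x,y) \notin A\times A$ at least one of $V(x),V(y)$ exceeds $d$, whence $h(x,y) > 1+d$. The Lyapunov inequality in both coordinates gives
\[
\overline{P}h(x,y) = 1 + PV(x) + PV(y) \le \alpha h(x,y) + (1-\alpha+2\beta) \le \Bigl(\alpha + \tfrac{1-\alpha+2\beta}{1+d}\Bigr) h(x,y) = \overline{\alpha}\, h(x,y),
\]
with $\overline{\alpha} = \frac{1+\alpha d + 2\beta}{1+d}$; the choice $d > \frac{2\beta}{1-\alpha}$ is exactly what makes $\overline{\alpha}<1$.

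For the remaining quantities, I would iterate the Lyapunov inequality to get $P^{n_0}V \le \alpha^{n_0}V + \beta\frac{1-\alpha^{n_0}}{1-\alpha}$, so for $x \in A$ (where $V(x)\le d$) one has $P^{n_0}V(x) \le \alpha d + \frac{\beta}{1-\alpha}$ as soon as $n_0\ge 1$, giving
\[
R = \sup_{(x,y)\in A\times A}\overline{P}^{n_0}h(x,y) \le 1 + 2\Bigl(\alpha d + \tfrac{\beta}{1-\alpha}\Bigr).
\]
To estimate $\int h\,\dd(\nu\otimes m_X)$, I would invoke $m_X$-integrability of $V$ (a standard consequence for positive Harris chains, see \cite[Theorem~14.0.1]{MT}): combining $P$-invariance of $m_X$ with $PV \le \alpha V + \beta$ yields $(1-\alpha)\int V\,\dd m_X \le \beta$, hence $\int V\,\dd m_X \le \frac{\beta}{1-\alpha}$, so that $\int h\,\dd(\nu\otimes m_X) \le 1 + \int V\,\dd\nu + \frac{\beta}{1-\alpha}$. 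Plugging all these ingredients into Theorem \ref{thm:rate} produces exactly the announced bound.

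There is no genuine obstacle, as each step reduces to a short calculation. If I had to identify the most delicate point, it would be the $m_X$-integrability of $V$ in the final estimate: one cannot derive this from $PV\le\alpha V+\beta$ by naively integrating against $m_X$ without first knowing $V\in L^1(m_X)$. This is where the general Markov-chain theory behind \cite[Theorem~14.0.1]{MT} must be invoked, using that the chain is positive Harris recurrent (already established via Proposition \ref{prop:harris_rec}) and that $V$ satisfies the drift criterion there.
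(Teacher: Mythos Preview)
Your proposal is correct and follows essentially the same route as the paper: the paper also chooses $h(x,y)=1+V(x)+V(y)$, verifies smallness of $A$ via Proposition~\ref{prop:compact_small}, derives the drift inequality $\overline{P}h\le\overline{\alpha}h$ off $A\times A$ by the same computation, bounds $R$ by iterating the Lyapunov inequality, and controls $\int h\,\dd(\nu\otimes m_X)$ using \cite[Theorem~14.0.1]{MT} to ensure $\int V\,\dd m_X\le\frac{\beta}{1-\alpha}$ before invoking Theorem~\ref{thm:rate}. You have also correctly identified the one nontrivial point, namely the a priori $m_X$-integrability of $V$.
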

Note that by introducing the relationship $j=\lfloor n/k\rfloor$ for some $k\in\N$ for which $\overline{\alpha}^{k-n_0}R<1$, the right-hand side above decays exponentially in $n$, and moreover that all the constants are given explicitly in terms of the starting distribution $\nu$, the Lyapunov function $V$ together with its parameters, and the measure $\mu$. 
\subsection{Limit Theorems}\label{subsec:limit_thms}
In the setting of Theorem~\ref{thm:BQ} there is another conclusion that can be drawn, concerning the distribution of typical trajectories: 
For every $x_0\in X$ and $\mu^{\otimes\N}$-a.e.\ $(g_n)_n\in G^\N$ it holds that 
\begin{align*}
\frac1n\sum_{k=0}^{n-1}\delta_{g_k\dotsm g_1x_0}\longrightarrow \nu_{x_0}
\end{align*}
as $n\to\infty$ in the weak* topology (see \cite[Theorem~1.3]{BQ3}). 
In other words, for every $f\in C_c(X)$ we have 
\begin{align*}
\frac1n\sum_{k=0}^{n-1}f(\Phi_k)\longrightarrow \int_Xf\dd\nu_{x_0}\quad\Pro_{x_0}\text{-a.s.}
\end{align*}
as $n\to\infty$, where, as before, $\Phi_k$ is given by \eqref{Phi_def} and stands for the location after $k$-steps of the random walk. 
Until now, we have not yet touched upon the validity of such a \emph{Strong Law of Large Numbers} in the spread out case; an omission that will be corrected now. 

To fix the terminology, let us quickly review three of the classical limit theorems in the context of Markov chains. 
For brevity, we shall use the notation 
\begin{align*}
\Sigma_n(f)=\sum_{k=0}^{n-1}f(\Phi_k)
\end{align*}
for a function $f$ on $X$. 
\begin{definition}
Consider the random walk on $X$ given by a probability measure $\mu$ on $G$. 
Let $f\colon X\to\R$ be a real-valued $m_X$-integrable function on $X$. 
We say 
\begin{itemize}
\item that the \emph{Strong Law of Large Numbers \textup{(}SLLN\textup{)}} holds for $f$ if for every $x\in X$ 
\begin{align*}
\lim_{n\to\infty}\tfrac{1}{n}\Sigma_n(f)=\int_Xf\dd m_X\quad\Pro_x\text{-a.s.,}
\end{align*}
\item that the \emph{Central Limit Theorem \textup{(}CLT\textup{)}} holds for $f$ if there exists a constant $\gamma_f\in[0,\infty)$ such that for the centered function $\overline{f}=f-\int_Xf\dd m_X$ and under each $\Pro_x$ we have convergence in distribution 
\begin{align*}
\tfrac{1}{\sqrt{n}}\Sigma_n(\overline{f})\overset{\mathrm{d}}{\longrightarrow}N(0,\gamma_f^2),
\end{align*}
where $N(0,\gamma_f^2)$ denotes the normal distribution with mean $0$ and variance $\gamma_f^2$ (to be understood as the Dirac distribution at $0$ in the degenerate case $\gamma_f=0$), and 
\item that the \emph{Law of the Iterated Logarithm \textup{(}LIL\textup{)}} holds for $f$ if for this constant $\gamma_f$ and every $x\in X$ 
\begin{align*}
\limsup_{n\to\infty}\tfrac{1}{\sqrt{2n\log\log(n)}}\Sigma_n(\overline{f})=\gamma_f\quad\Pro_x\text{-a.s.}
\end{align*}
\end{itemize}
\end{definition}
The remarkable fact is that spread out random walks always satisfy the SLLN, and satisfy the CLT and LIL as soon as they admit an everywhere finite Lyapunov function. 
\begin{theorem}
Let $\Gamma\subset G$ be a lattice and $\mu$ be spread out and adapted. 
Then: 
\begin{enumerate}
\item The SLLN holds for every $m_X$-integrable function on $X$. 
In particular, for $\mu^{\otimes\N}$-a.e.\ $(g_n)_n\in G^\N$ we have 
\begin{align*}
\frac1n\sum_{k=0}^{n-1}\delta_{g_k\dotsm g_1x_0}\longrightarrow m_X
\end{align*}
as $n\to\infty$ in the weak* topology. 
\item Suppose the random walk admits an everywhere finite Lyapunov function $V\colon X\to[1,\infty)$ and let $f\colon X\to\R$ be measurable and satisfy $f^2\le V$. 
Then for the centered function $\overline{f}=f-\int_Xf\dd m_X$, the asymptotic variance 
\begin{align*}
\gamma_f^2=\lim_{n\to\infty}\tfrac1n\E_{m_X}\sqbr[\big]{\Sigma_n(\overline{f})^2}
\end{align*}
exists and is finite, and the CLT and LIL hold for $f$ and this number $\gamma_f$. 
\end{enumerate}
\end{theorem}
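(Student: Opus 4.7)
The entire proof rests on a single structural fact already established in the paper: under the hypotheses of the theorem, Propositions~\ref{prop:spread_out_irr} and~\ref{prop:harris_rec} together show that the random walk on $X$ is positive Harris recurrent with unique invariant probability $m_X$. (If $\mu$ is not already adapted one can first pass to $\G x$ using Lemma~\ref{lem:adapted}, but here adaptedness is already assumed.) With this in hand, all three classical limit theorems reduce to quoting the appropriate result from Chapter~17 of \cite{MT} and checking that its hypotheses are met.

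For (i), I would simply invoke the ergodic theorem for positive Harris chains (\cite[Theorem~17.0.1(i)]{MT}, equivalently \cite[Theorem~17.1.7]{MT}): for every $m_X$-integrable $f\colon X\to\R$ and every $x\in X$,
\begin{align*}
\tfrac1n\Sigma_n(f)\longrightarrow \int_X f\dd m_X\quad\Pr_x\text{-a.s.}
\end{align*}
The ``in particular'' statement about empirical measures then follows by a standard separability argument: since $X$ is $\sigma$-compact and metrizable, $C_c(X)$ contains a countable sup-norm dense family $(f_k)_{k\in\N}$, and a countable intersection of full-measure sets is still of full measure; on this intersection $\frac1n\sum_{k=0}^{n-1}\delta_{\Phi_k}$ integrates every $f_k$, hence every $f\in C_c(X)$, correctly, which is weak* convergence to $m_X$.

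For (ii), the everywhere finite Lyapunov function provides the geometric drift condition (V4) of~\cite{MT}, and the growth hypothesis $f^2\le V$ is precisely the integrability condition used in their central limit theorem for Harris chains. Hence \cite[Theorem~17.0.1(iii)--(iv)]{MT} yields directly the existence and finiteness of the asymptotic variance
\begin{align*}
\gamma_f^2=\lim_{n\to\infty}\tfrac1n\E_{m_X}\bigl[\Sigma_n(\overline{f})^2\bigr],
\end{align*}
the CLT $n^{-1/2}\Sigma_n(\overline{f})\overset{\mathrm{d}}{\to} N(0,\gamma_f^2)$ under every $\Pr_x$, and the LIL. The classical implementation of this result goes through the Poisson equation $\hat f - P\hat f = \overline{f}$: geometric $V$-ergodicity (which is \cite[Theorem~16.0.1]{MT} applied to $V$) supplies a solution $\hat f$ with $\hat f^2\lesssim V$, so that $M_n\coloneqq \hat f(\Phi_n)-\hat f(\Phi_0)+\Sigma_n(\overline{f})$ is a square-integrable martingale with stationary increments, to which the martingale CLT and LIL apply and then transfer back to $\Sigma_n(\overline{f})$.

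The main point to watch — and I expect this to be the only subtle issue — is that neither the CLT nor the LIL requires aperiodicity, even though much of \parsign\ref{subsec:eff_mix} and~\parsign\ref{subsec:rate} did; the drift-based limit theorems in \cite[Ch.~17]{MT} are stated for positive Harris chains without a period-one assumption, since one sums $\overline{f}(\Phi_k)$ over \emph{all} $k$. If a reader prefers to avoid invoking the periodic case of \cite{MT} directly, one can decompose $X=D_1\sqcup\dots\sqcup D_d$ via Proposition~\ref{prop:periodicity} and apply the aperiodic case to the $d$-step chain on each $D_i$ driven by $\mu^{*d}$, then reassemble using
\begin{align*}
\Sigma_{nd+r}(f)=\sum_{i=0}^{d-1}\sum_{j=0}^{n-1}f(\Phi_{jd+i})+\text{boundary terms},
\end{align*}
which are individually controlled by the aperiodic CLT/LIL on each $D_i$. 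Beyond this bookkeeping, the proof contains no new analytic content: everything reduces to the positive Harris recurrence and Lyapunov drift already produced in the previous sections.
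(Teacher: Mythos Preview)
Your proposal is correct and follows essentially the same route as the paper: establish positive Harris recurrence via Propositions~\ref{prop:spread_out_irr} and~\ref{prop:harris_rec}, then invoke \cite[Theorem~17.1.7]{MT} for~(i) with the separability of $C_c(X)$, and \cite[Theorem~17.0.1]{MT} for~(ii). The only cosmetic difference is that the paper routes the verification of the hypotheses of \cite[Theorem~17.0.1]{MT} through its own Theorem~\ref{thm:eff_mixing}(ii) (itself an application of \cite[Theorem~16.1.2]{MT}), whereas you cite the drift condition and \cite[Theorem~16.0.1]{MT} directly; your additional remarks on aperiodicity and the Poisson-equation/martingale mechanism are more explicit than the paper's terse treatment but do not constitute a different argument.
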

\begin{proof}
Combining Corollary~\ref{cor:fin_vol_irred} and Proposition~\ref{prop:harris_rec} we know that the random walk on $X$ is a positive Harris recurrent Markov chain with invariant probability $m_X$. 
Part (i) thus follows from \cite[Theorem~17.1.7]{MT}, noting for the second claim that $C_c(X)$ is separable. 
Under the assumptions of (ii), Theorem~\ref{thm:eff_mixing}(ii) ensures that the conditions of \cite[Theorem~17.0.1]{MT} are satisfied, and everything follows from that theorem. 
\end{proof}
\subsection{Proof of the Ratio Limit Theorem}\label{subsec:ratio_limit}
It remains to prove the ratio limit theorem for the infinite volume case. 
\begin{proof}[Proof of Theorem~\textup{\ref{thm:ratio_limit_thm}}]
For both statements, it suffices to consider the case in which also $f_1\ge 0$. 
By Proposition~\ref{prop:quadr_harris_rec}, the random walk on $X$ given by $\mu$ is Harris recurrent with invariant measure $m_X$. 
The first statement of the theorem thus immediately follows by combining \cite[Corollary~8.4.3]{revuz} and \cite[Theorem~6.6.5]{revuz}. 

It remains to prove \eqref{strong_ratio} under the additional assumptions that $\mu$ is symmetric and aperiodic. 
In view of Proposition~\ref{prop:aperiodicity}, aperiodicity of $\mu$ implies aperiodicity of the random walk. 
Let $A\subset X$ be a small set with positive and finite $m_X$-measure, say with $P^k(x,\cdot)\ge \epsilon\lambda$ for all $x\in A$. 
In view of \cite[Proposition~5.2.4(iii)]{MT} we may assume that $\lambda(A)>0$, and the discussion in \cite[\S5.4.3]{MT} shows that we may take $k$ to be even. 
With similar arguments as in the proof of Lemma~\ref{lem:small}, after shrinking $A$ and $\epsilon$ we may even assume that $\lambda=m_A=\frac{1}{m_X(A)}m_X|_A$ is the normalized restriction of $m_X$ to $A$ (cf.\ also Orey's $C$-set theorem \cite[Theorem~2.1]{O}). 
Then \cite[Theorem~2.1]{LS_reversible} implies 
\begin{align*}
\lim_{m\to\infty}\frac{\lambda P^{2m}(A)}{\lambda P^{2(m-1)}(A)}=1.
\end{align*}
Applying this $k/2$ times, we see that also 
\begin{align*}
\lim_{m\to\infty}\frac{\lambda P^{km}(A)}{\lambda P^{k(m-1)}(A)}=\lim_{m\to\infty}\frac{\lambda P^{km}(A)}{\lambda P^{km-2}(A)}\frac{\lambda P^{km-2}(A)}{\lambda P^{km-4}(A)}\dotsm\frac{\lambda P^{km-k+2}(A)}{\lambda P^{km-k}(A)}=1.
\end{align*}
This shows that all the assumptions in~\cite{N_ratio_limit} are satisfied. 

In view of \cite[Theorem~1(ii)]{N_ratio_limit}, it remains to argue that compactly supported bounded measurable functions on $X$ and compactly supported probability measures on $X$ with bounded density with respect to $m_X$ are \enquote{small} in Nummelin's sense. 
Identifying such measures with their density and noting that by symmetry of $\mu$ the action of $P$ on functions and measures respects this identification, we see that it suffices to show this claim for functions. 
For this, by \cite[Corollary~2.4]{N_ratio_limit}, we need only show that for every compact subset $K\subset X$ there exists $N\in\N$ such that $\sum_{n=0}^NP^n\mathds{1}_A$ is bounded away from $0$ on $K$. 
However, since the random walk is a $T$-chain and compact sets are petite for $T$-chains (\cite[Theorem~6.2.5(ii)]{MT}), the latter follows from \cite[Proposition~5.5.5(i)]{MT} and \cite[Proposition~5.5.6(i)]{MT}, as $m_X(A)>0$. 
\end{proof}
Invoking \cite[Theorem~1(i)]{N_ratio_limit}, this proof also justifies the claim in Remark~\ref{rmk:conjecture}: 
Taking for $A$ the same small set as above, we see that \eqref{sufficient} implies 
\begin{align*}
\frac{\int_Xf_i\dd(\mu^{*n}*\delta_{x_i})}{(\mu^{*n}*m_A)(A)}\longrightarrow\frac{\int_Xf_i\dd m_X}{m_X(A)}
\end{align*}
as $n\to\infty$ for $i=1,2$. 
Taking the quotient yields \eqref{strong_ratio} with $\delta_{x_i}$ in place of $\nu_i$. 

\bibliographystyle{plain}
\bibliography{refs}

\end{document}